\algrenewcommand\algorithmicrequire{\textbf{Initialization:}}
\renewcommand{\equiv}{\ensuremath{:=}}
\newcommand{\act}[1]{\left\langle {#1} \right\rangle}
\newcommand{\mymap}[3]{#1:\,#2 \to #3\,}
\newcommand{\mysetc}[2]{\left\{#1\,\left|\,#2\right.\right\}}
\newcommand{\cex}[2]{\ensuremath{\mathbb{E}[#1\,|\,#2]}}
\newcommand{\cpr}[2]{\ensuremath{\mathbb{P}(#1\,|\,#2)}}
\newcommand{\1}{\ensuremath{\mathds{1}} }
\newcommand{\icol}[1]{
  \left(\begin{smallmatrix}#1\end{smallmatrix}\right)%
}
\newcommand{\wto}{\xrightarrow{\textrm{w}}}
\newcommand{\cb}{\ensuremath{\overline{\mathbb{B}}}}
\DeclareMathOperator{\supp}{supp}
\DeclareMathOperator{\dist}{dist}
\DeclareMathOperator*{\argmin}{\arg\!\min}
\DeclareMathOperator{\prox}{prox}
\DeclareMathOperator{\id}{Id}
\DeclareMathOperator{\Fix}{Fix}
\DeclareMathOperator{\cl}{cl}
\DeclareMathOperator{\Hess}{Hess}
\def\@endtheorem{\endtrivlist\@endpefalse }
\newtheorem{thm}{Theorem}[section]
\newtheorem{cor}[thm]{Corollary}
\newtheorem{lemma}[thm]{Lemma}
\newtheorem{prop}[thm]{Proposition}
\theoremstyle{definition}
\newtheorem{example}[thm]{Example}
\newtheorem{definition}[thm]{Definition}
\newtheoremstyle{note}
{3pt}
{3pt}
{}
{}
{\bfseries}
{\bfseries :}
{.5em}
{}
\theoremstyle{note}
\newtheorem{rem}[thm]{Remark}
\title{Random Function Iterations for Consistent
Stochastic Feasibility}
\author{Neal Hermer\thanks{Institut f\"ur Numerische und Angewandte Mathematik,
Universit\"at G\"ottingen,
37083 G\"ottingen, Germany. NH was supported by 
Deutsche Forschungsgemeinschaft Research Training Grant 2088 TP-B5.
E-mail:  \texttt{n.hermer@math.uni-goettingen.de}}, 
D. Russell Luke\thanks{Institut f\"ur Numerische und Angewandte Mathematik,
Universit\"at G\"ottingen,
37083 G\"ottingen, Germany. DRL was supported in part by 
Deutsche Forschungsgemeinschaft Research Training Grant 2088 TP-B5.
E-mail:  \texttt{r.luke@math.uni-goettingen.de}} 
and  Anja Sturm\thanks{Institut f\"ur Mathematische Stochastik,
Universit\"at G\"ottingen,
37077 G\"ottingen, Germany. AS was supported in part by Deutsche Forschungsgemeinschaft 
Research Training Grant 2088 TP-B5.
E-mail:  \texttt{asturm@math.uni-goettingen.de}}}
\date{September 19, 2018}
\begin{document}
\maketitle

\begin{abstract}
  We study the convergence of iterated random functions for stochastic
  feasibility in the consistent case (in the sense of Butnariu and
  Flåm \cite{ButnariuFlam1995}) in several different settings, under
  decreasingly restrictive regularity assumptions of the fixed point
  mappings.  The iterations are Markov chains and, for the purposes of
  this study, convergence is understood in very restrictive
  terms. We show that sufficient conditions for geometric (linear)
  convergence in expectation of stochastic projection algorithms
  presented in Nedi\'c \cite{Nedic2011}, are in fact {\em necessary}
  for geometric (linear) convergence in expectation more generally of iterated
  random functions.
\end{abstract}

{\small \noindent {\bfseries 2010 Mathematics Subject Classification:}
  {Primary
    60J05, 
    52A22, 
    49J55 
    Secondary 49J53, 
    65K05.\\ 
  }}

\noindent {\bfseries Keywords:}
Averaged mappings, nonexpansive mappings, paracontractions, stochastic
feasibility, stochastic fixed point problem, iterated random
functions, metric subregularity, linear regularity, linear convergence
in expectation, geometric convergence of measures

\section{Introduction}
\label{sec:introduction}

We are inspired by problems of the form
\begin{align*}
  \text{Find } x^{*} \in \bigcap_{i \in I} C_{i}.
\end{align*}
where $I$ is an index set and $C_i$ are closed subsets of a metric
space.  When $I$ is a finite set, then this is the classical {\em
  deterministic feasibility problem}.  Randomized algorithms for
solving such finite intersections have been intensely studied in
recent years for their application to distributed computational
schemes and machine learning.  Our study here concerns a
generalization where $I$ is an arbitrary (possibly uncountable) set.
This was first considered by Butnariu and Fl\r{a}m
\cite{ButnariuFlam1995,flam1995,Butnariu1995}, where this is called
the {\em stochastic feasibility problem}.  There are many reasons why
one might consider such infinite feasibility problems.  At the time
Butnariu and Fl\r{a}m's work appeared, there was a great deal of
interest in solution methods for linear integral equations.  Our
primary motivation is to propose a new way of modeling and analyzing
errors, either numerical or measurement, as they are manifest in
numerical iterative procedures.

The simplest algorithm one could imagine for solving such problems is
to generate sequences $(x_{k})_{k\in\mathbb{N}}$ by the fixed point
iteration
\[
x_{k+1}\in \left(\prod_{i\in I} P_{C_{i}}\right)x_k
\]
where $P_{C_i}$ is the metric projection onto the set $C_{i}$ and
$\prod_{i\in I}$ indicates the composition of the operators over the
index set. This is known as the cyclic projections algorithm and has
been extensively studied for the case when $I$ is a finite set.  But
when $I$ is infinite one immediately encounters the problem that such
an algorithm never completes the first iteration!

One application where infinite feasibility problems appear very
naturally is integral equations of the first kind in the separable
Hilbert space $L_{2}([a,b])$, as considered in \cite{Butnariu1995}:
\begin{align}\label{e:ie}
  (Tx)(t) = \int_{a}^{b} K(t,s) x(s) \dd{s} = g(t) \qquad t \in [a,b],
\end{align}
with $g \in L_{2}([a,b])$.  The feasibility reformulation of this
problem is
\begin{align}\label{e:ie-feas}
  \mbox{ Find } x\in \bigcap_{t\in[a,b],~ a.s.}
  C_t\equiv\left\{\varphi\in
    L_{2}([a,b])~|~\left(T\varphi\right)(t)=g(t) \right\}.
\end{align}
The {\em almost surely (a.s)} under the intersection will be clarified
below.  The basic idea, however, is to choose the parameter $t$ above
{\em randomly} and to choose the nearest point in the set $C_t$ to the
current guess.  As such, the sequences that we generate are random
processes.  We return to this application in Section \ref{sec:appl}.

One of our main contributions is to place the previous results on
stochastic projections iterations in the broader context of random
function iterations, that is, iterations of randomly selected
mappings with arbitrary initial distributions from which the initial
points are chosen.  Following \cite{Nedic2010} we characterize the
sequence generated by the method of Random Function Iterations
(\cref{algo:SSM}) as a Markov chain (\cref{thm:SSMasMC}).  There are
many different notions of convergence of Markov chains.  For {\em
  consistent} feasibility problems considered in this study (Standing
Assumption 2) almost sure convergence applies, which perhaps is of
limited interest in the broader context of generic Markov chains.
When we move to {\em inconsistent} stochastic feasibility problems,
what we formulate more generally as stochastic fixed point problems in
a follow-up study, the richness of the theory of Markov processes
comes much more into play.  For the consistent case we are able to
establish convergence results in a number of new settings, namely {\em
  compact metric spaces} or $\mathbb{R}^{n}$ with {\em
  paracontracting} mappings (see Section \ref{sec:compact metr} and
\ref{sec:findimnormvc}, resp.) and separable Hilbert spaces with {\em
  averaged} mappings
(see Section \ref{sec:analysisSSM}).  (It is the technology of
averaged mappings that opens the door to an analysis of algorithms for
inconsistent stochastic feasibility problems and more generally
stochastic fixed point problems.)

Our framework allows us to address a variety of Markov chains and
stochastic algorithms, though to fix these ideas our main example will
be stochastic sequential projection algorithms.  We also achieve with
this analysis more refined convergence statements about the
corresponding sequences of random variables, namely almost surely
strong and, under certain assumptions, geometric convergence (called
{\em linear} or exponential in other communities) of the measures
\cref{thm:LinCVGMeasure}.  We then show {\em necessary} and sufficient
conditions for geometric convergence in expectation
\cref{thm:Equiv_lin_cvg} in a stochastic analog to \cite[Theorem 3.12
and Corollary 3.13]{Luke2016b}.  These conditions are identified as 
a manifestation of {\em metric subregularity} of a suitable 
merit function at its zeros.   When specialized to convex
feasibility, this formulation of metric subregularity of the merit function is 
equivalent to the function possessing what is known as the KL property 
\cref{thm:equiv_linConv_propR}.   Finally, we identify a previously
unrecognized {\em necessary} condition for geometric convergence of
stochastic sequential projection algorithms, \cref{thm:PropCond}.  Despite the
strong assumptions of the present study, a number of unexpected
behaviors can occur; these are demonstrated in concrete examples.

\section{Stochastic Fixed Point Theory}
\label{sec:consistent_feas_prob}

Consider a collection of continuous mappings $\mymap{T_{i}}{G}{G}$, $i
\in I$, on a metric space $(G,d)$, where $I$ is an arbitrary index
set. Assume that $(I,\mathcal{I})$ is a measurable space.  Let
$(\Omega,\mathcal{F},\mathbb{P})$ be a probability space. Let $\xi$ be
an $I$-valued random variable, i.e.\ $\mymap{\xi}{\Omega}{I}$
measurable ($\xi^{-1} A \in \mathcal{F}$ for all $A \in \mathcal{I}$).

Let $(\xi_{n})_{n\in\mathbb{N}}$ be an iid sequence with $\xi_{n}
\overset{\text{d}}{=} \xi$.  Let $\mu$ be a probability measure on
$(G,\mathcal{B}(G))$. The stochastic selection method is given by
\begin{algorithm}[H]
  \caption{Random Function Iteration (RFI)}\label{algo:SSM}
  \begin{algorithmic}[0]
    \Require{$X_{0} \sim \mu$} \For{$k=0,1,2,\ldots$} \State{ $
      X_{k+1} = T_{\xi_{k}} X_{k}$}
    \EndFor
    \State \Return $\{X_{k}\}_{k \in \mathbb{N}}$
  \end{algorithmic}
\end{algorithm}
\vspace*{-\baselineskip} \medskip Here we mean by $X_{0} \sim \mu$,
that the \emph{law} (i.e.\ distribution) of $X_{0}$ satisfies
$\mathcal{L}(X_{0}) := \mathbb{P}^{X_{0}} := \mathbb{P}(X_{0} \in
\cdot) := \mathbb{P} \circ X_{0}^{-1} = \mu$. The following
assumptions will be employed throughout.

\paragraph{Standing Assumption 1}
\label{ass:1}
\begin{enumerate}[label=\alph*)]
\item\label{item:ass1:indep} $X_{0},\xi_{0},\xi_{1}, \ldots, \xi_{k}$
  are independent for every $k \in \mathbb{N}_{0}$, where
  $(\xi_{k})_{k \in \mathbb{N}}$ are i.i.d.
\item\label{item:ass1:Phi} The function $\mymap{\Phi}{G\times I}{G}$,
  $(x,i)\mapsto T_{i}x$ is measurable.
\end{enumerate}

\subsection{RFI as a Markov chain}
\label{sec:SPMasMC}

The iterates of the RFI $X_{k+1} = \Phi(X_{k},\xi_{k}) =
T_{\xi_k}X_k$, $k \in \mathbb{N}_{0}$, can be considered as a time
homogeneous Markov chain with transition kernel
\begin{equation}\label{eq:trans kernel}
  (x\in G) (A\in
  \mathcal{B}(G)) \qquad p(x,A) \equiv \mathbb{P}(\Phi(x,\xi) \in A) =
  \mathbb{P}(T_{\xi}x \in A)   
\end{equation}
where $\Phi$ is called an \emph{update function}.

To see that $p$ is really a transition kernel, recall that, in
general, a transition kernel $\mymap{p}{G\times
  \mathcal{B}(G)}{[0,1]}$ is measurable in the first argument, i.e.\
$p(\cdot,A)$ is measurable for all $A \in \mathcal{B}(G)$ (follows
from \cite[Lemma 1.26]{kallenberg1997}) and is a probability measure
in the second argument, i.e.\ $p(x,\cdot)$ is a probability measure
for all $x \in G$ (immediate by definition).  Now recall the
definition of a Markov chain with general transition kernel $p$.
\begin{definition}
  A sequence of random variables $(X_{k})_{k \in \mathbb{N}_{0}}$,
  $\mymap{X_{k}}{(\Omega,\mathcal{F},\mathbb{P})}{(G,\mathcal{B}(G))}$
  is called Markov chain with transition kernel $p$ if for all $k \in
  \mathbb{N}_{0}$ and $A \in \mathcal{B}(G)$ $\mathbb{P}$-a.s.\ holds
  \begin{enumerate}[label=(\roman*)]
  \item $\cpr{X_{k+1} \in A}{X_{0}, X_{1}, \ldots, X_{k}} =
    \cpr{X_{k+1} \in A}{X_{k}}$
  \item $\cpr{X_{k+1} \in A}{X_{k}} = p(X_{k},A)$.
  \end{enumerate}
\end{definition}
Since $(G,\mathcal{B}(G))$ is a Borel space and $X_{k}$ is a random
  variable in $G$, it makes sense to talk about these conditional
  expectations (existence of regular versions of the conditional
  distribution by \cite[Theorem 5.3]{kallenberg1997}).

From the setting above, the following fact follows easily from
\cref{thm:disinteg}.
\begin{prop}\label{thm:SSMasMC}
  Under \nameref{ass:1}, the sequence of random variables $(X_{k})_{k
    \in \mathbb{N}_{0}}$ generated by \cref{algo:SSM} is a Markov
  chain with transition kernel $p$ given by \eqref{eq:trans kernel}.
\end{prop}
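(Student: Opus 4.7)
The plan is to verify the two defining properties of a Markov chain directly by unrolling the recursion $X_{k+1} = \Phi(X_k,\xi_k)$ and exploiting the independence built into \nameref{ass:1} together with the measurability of $\Phi$.

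First I would record the structural observation that, by induction, $X_k$ is a measurable function of the random vector $(X_0,\xi_0,\xi_1,\ldots,\xi_{k-1})$: the base case is immediate and the induction step uses measurability of $\Phi$ from \nameref{ass:1}\ref{item:ass1:Phi}. In particular $\sigma(X_0,X_1,\ldots,X_k) \subset \sigma(X_0,\xi_0,\ldots,\xi_{k-1})$. Combined with \nameref{ass:1}\ref{item:ass1:indep}, this yields the crucial independence $\xi_k \perp \sigma(X_0,X_1,\ldots,X_k)$, and a fortiori $\xi_k \perp \sigma(X_k)$. Since $(\xi_n)$ is i.i.d., $\xi_k \stackrel{d}{=} \xi$.

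Next I would establish property (ii). Writing $X_{k+1} = \Phi(X_k,\xi_k)$ and using independence of $\xi_k$ and $X_k$, I would invoke the disintegration result \cref{thm:disinteg} to obtain, $\mathbb{P}$-a.s.,
\begin{equation*}
  \cpr{X_{k+1} \in A}{X_k} = \mathbb{E}\bigl[\1_A(\Phi(X_k,\xi_k))\,\big|\,X_k\bigr] = \int_I \1_A(\Phi(X_k,i))\, \mathbb{P}^{\xi}(\D{i}) = \mathbb{P}(\Phi(x,\xi)\in A)\big|_{x=X_k} = p(X_k,A).
\end{equation*}
The right-hand side is a well-defined measurable function of $X_k$ because $\Phi$ is jointly measurable (so $(x,i)\mapsto \1_A(\Phi(x,i))$ is measurable) and Fubini/Tonelli ensures the integral is measurable in $x$; the existence of regular conditional distributions on the Borel space $(G,\mathcal{B}(G))$ guarantees that this really is a version of the conditional probability.

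For property (i) I would repeat the same computation, but now conditioning on the larger $\sigma$-algebra $\sigma(X_0,\ldots,X_k)$. Since this $\sigma$-algebra is contained in $\sigma(X_0,\xi_0,\ldots,\xi_{k-1})$, which is independent of $\xi_k$, the disintegration identity applies verbatim and yields the same expression $p(X_k,A)$. Comparing with (ii) gives
\begin{equation*}
  \cpr{X_{k+1} \in A}{X_0,X_1,\ldots,X_k} = p(X_k,A) = \cpr{X_{k+1} \in A}{X_k} \quad \mathbb{P}\text{-a.s.},
\end{equation*}
which is the Markov property. The only real obstacle is book-keeping around measurability and the invocation of \cref{thm:disinteg}; once the independence of $\xi_k$ from the past is established, both conditional expectations collapse onto $p(X_k,A)$ by the same argument.
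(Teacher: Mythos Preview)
Your proposal is correct and follows essentially the same approach as the paper: the paper's proof consists of the single sentence ``the following fact follows easily from \cref{thm:disinteg},'' and you have carefully unpacked exactly that---establishing the independence $\xi_k \perp \sigma(X_0,\ldots,X_k)$ via the measurable recursion, and then applying the disintegration theorem twice to compute both conditional probabilities as $p(X_k,A)$.
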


Let $\mu\in \mathscr{P}(G)$, where $\mathscr{P}(G)$ is the space of
all probability measures on $G$.  The Markov operator $\mathcal{P}$
acting on a measure $\mu$ is defined via
\begin{align*}
  (A \in \mathcal{B}(G))\qquad \mu\mathcal{P} (A) := \int_{G} p(x,A)
  \mu(\dd{x}).
\end{align*}
One defines the operation of the Markov operator acting on a
measurable function $\mymap{f}{G}{\mathbb{R}}$ via
\begin{align*}
  (x\in G)\qquad \mathcal{P}f(x):= \int_{G} f(y) p(x,\dd{y}).
\end{align*}
Note that
\begin{align*}
  \mathcal{P}f(x) = \int_{G} f(y) \mathbb{P}^{\Phi(x,\xi)}(\dd{y}) =
  \int_{\Omega} f(\Phi(x,\xi(\omega))) \mathbb{P}(\dd{\omega})=
  \int_{I} f(\Phi(x,u)) \mathbb{P}^{\xi}(\dd{u}).
\end{align*}

There are several notions of convergence that one can study for the
sequence $(X_{k})$ on the metric space $(G,d)$, or correspondingly of
the law $(\mathcal{L}(X_{k}))$ on $\mathscr{P}(G)$, where
$\mathcal{L}(X_k)\equiv\mathbb{P}^{X_k}\equiv
\mathbb{P}(X_k\in\cdot)$.  Denote the set of bounded and continuous
functions from $G$ to $\mathbb{R}$ by $C_{b}(G)$.  Let $(\nu_{n})$ be
a sequence of probability measures on $G$.  The sequence $(\nu_{n})$
converges to $\nu$ in the {\em weak sense}, if $\nu \in
\mathscr{P}(G)$ and for all $f \in C_{b}(G)$ it holds that $\nu_{n} f
\to \nu f$ as $n \to \infty$.

A fixed point of the Markov operator $\mathcal{P}$ is called an
\emph{invariant distribution}, i.e.\ $\pi \in \mathscr{P}(G)$ is
invariant if and only if $\pi \mathcal{P} = \pi$.  A very general
notion of convergence in the context of Markov chains concerns the
convergence of the probability measures $\nu_k\equiv
\tfrac{1}{k}\sum_{j=1}^k\mathcal{L}(X_j)$ in the weak sense.  An
elementary fact from the theory of Markov chains
(\cref{thm:construction_inv_meas}) is that, if this converges, it
converges to an invariant probability measure $\pi$: For $f \in
C_{b}(G)$,
\begin{align*}
  \nu_k f= \mathbb{E}\left[\frac{1}{k}\sum_{j=1}^{k} f(X_{j})\right]
  \to \pi f, \qquad \text{as } k \to \infty.
\end{align*}
The notion of convergence we consider here is much stronger; we
consider almost sure convergence of the sequence $(X_k)$ to a random
variable $X$:
\[
X_k\to X \text{ a.s.}, \qquad \text{as } k \to \infty.
\]
  
Clearly, almost sure convergence of the sequence implies the more
general notion above.  This is common in the studies of stochastic
algorithms in optimization, though this does not require the full
power of the theory of general Markov processes.  For {\em consistent
  feasibility} (defined below) however, this is all that is needed for
our present purposes.  In a follow-up study we will need to consider
more general notions of convergence of this Markov chain.

\subsection{Consistent Stochastic Feasibility Problem}\label{sec:consist SSM}
The {\em stochastic feasibility} problem is to find a point
\begin{align}
  \label{eq:stoch_feas_probl}
  x^{*} \in C := \mysetc{x \in G}{\mathbb{P}(x \in \Fix T_{\xi}) = 1},
\end{align}
where the fixed point set of the operator $T_{i}$ is denoted as
\begin{align*}
  \Fix T_{i} = \mysetc{x \in G}{x = T_{i}x}.
\end{align*}
We assume throughout that not only is $\Fix T_{i}$ non-empty for $\mathbb{P}^{\xi}$-almost all  $i \in I$, but
more restrictively: 
\vspace{-\baselineskip}\paragraph{Standing Assumption 2 (consistent feasibility problem).}
\label{item:ass1:Cnonemp} The set $C$ is nonempty.
\medskip

\noindent Note that due to continuity of $T_{i}$ it follows, that
$\Fix T_{i}$ is a closed set.  This specializes immediately to the
stochastic feasibility problem formulated by Butnariu and Fl\r{a}m
\cite{ButnariuFlam1995} where $\Fix T_{\xi}=C_{\xi}$.  In order to
make sense of the specialization to stochastic set feasibility, we
need the event $\{x \in \Fix T_{\xi}\}$ to be an element of
$\mathcal{F}$ for any $x\in G$.
\begin{rem}\label{rem:feas_set_measurable}
  Since $\{x\}\in \mathcal{B}(G)$ and the function
  $\mymap{\Phi_{\xi}}{G\times \Omega}{G}$, $(x,\omega)\mapsto
  \Phi_{\xi}(x,\omega) := (\Phi \circ (\id,\xi))(x,\omega)=
  T_{\xi(\omega)}x$ is measurable as composition of two measurable
  functions, we find
  \begin{align*}
    \{x \in \Fix T_{\xi}\} &= \mysetc{\omega \in \Omega}{x \in \Fix
      T_{\xi(\omega)}} \\ &= \mysetc{\omega \in
      \Omega}{T_{\xi(\omega)}x = x} \\ &= \mysetc{\omega \in
      \Omega}{(x,\omega) \in \Phi_{\xi}^{-1}\{x\}} \\ & \in
    \mathcal{F},
  \end{align*}
  since slices of sets in the product $\sigma$-field are measurable
  with respect to the single $\sigma$-fields (see
  \cref{lemma:slices_prod_field}).
\end{rem}

\noindent Denote in the following for $A\subset\Omega$,
$C(A):=\bigcap_{\omega \in A} \Fix T_{\xi(\omega)}$.
\begin{lemma}[equivalence of stochastic and deterministic feasibility 
  problems]\label{lemma:equiv_SCFP_CFP}
  Under the standing assumptions, and if $G$ is complete and
  separable, there exists a $\mathbb{P}$-nullset $N\subset\Omega$,
  such that
  \begin{align*}
    C= C(\Omega\setminus N) = \bigcap_{\omega \in \Omega \setminus N}
    \Fix T_{\xi(\omega)}.
  \end{align*}
  Furthermore, $C \subset G$ is closed.
\end{lemma}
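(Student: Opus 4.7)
The plan is to prove the two set-equalities via circular inclusion, with the main work going into the inclusion $C\subset C(\Omega\setminus N)$, which is the only place where one needs a uniform choice of nullset. Closedness of $C$ will then come for free once the identity is established.

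First I would dispatch the easy direction. For \emph{any} $\mathbb{P}$-nullset $N\subset\Omega$, if $x\in C(\Omega\setminus N)$, then $x\in\Fix T_{\xi(\omega)}$ for every $\omega\in\Omega\setminus N$, hence the event $\{x\in\Fix T_\xi\}$ contains $\Omega\setminus N$ (it is measurable by the preceding \cref{rem:feas_set_measurable}), so $\mathbb{P}(x\in\Fix T_\xi)\geq 1$ and $x\in C$. This gives $C(\Omega\setminus N)\subset C$ regardless of the choice of $N$.

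The main obstacle is the reverse inclusion, because for each individual $x\in C$ there is its own nullset $N_x:=\{\omega\in\Omega:x\notin\Fix T_{\xi(\omega)}\}$, and $C$ may be uncountable, so one cannot simply union these nullsets over all of $C$. To produce a single $N$ that works simultaneously, I would invoke separability of $G$: since $C\subset G$ and $G$ is separable, $C$ carries a countable dense subset $\{x_k\}_{k\in\mathbb N}\subset C$. For each $k$ the nullset $N_k:=N_{x_k}$ is measurable (again by \cref{rem:feas_set_measurable}) and satisfies $\mathbb{P}(N_k)=0$, so $N:=\bigcup_{k\in\mathbb N} N_k$ is itself a $\mathbb{P}$-nullset. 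For every $\omega\in\Omega\setminus N$ we have $\{x_k\}_{k\in\mathbb N}\subset\Fix T_{\xi(\omega)}$; by continuity of $T_{\xi(\omega)}$ the set $\Fix T_{\xi(\omega)}$ is closed, so it contains the closure $\overline{\{x_k\}_{k\in\mathbb N}}\supset C$. Thus $C\subset \Fix T_{\xi(\omega)}$ for all $\omega\in\Omega\setminus N$, which is exactly $C\subset C(\Omega\setminus N)$.

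Closedness of $C$ is then immediate: the identity just proved expresses $C$ as the intersection $\bigcap_{\omega\in\Omega\setminus N}\Fix T_{\xi(\omega)}$ of closed sets. The genuinely subtle ingredient is the density/continuity trick that promotes countably many pointwise a.s.\ statements into a single a.s.\ statement uniform over the uncountable set $C$; this is where separability of $G$ is used in an essential way (completeness plays no role in this particular argument, but is presumably needed elsewhere under the same blanket hypothesis). Measurability of each $N_k$ is already guaranteed by the remark preceding the lemma, so no extra work on the $\sigma$-algebra side is required.
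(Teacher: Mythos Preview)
Your proof is correct and follows essentially the same approach as the paper's: the easy inclusion $C(\Omega\setminus N)\subset C$ for any nullset $N$, then the construction of a single nullset via a countable dense subset of $C$ together with closedness of each $\Fix T_{\xi(\omega)}$, and finally closedness of $C$ as an intersection of closed sets. Your parenthetical observation that completeness of $G$ is not actually used in this argument is also accurate.
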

\begin{proof}
  For the direction "$\supset$", note that $\mathbb{P}(\Omega\setminus
  N) = 1$ for any $\mathbb{P}$-nullset $N \subset \Omega$, so for $x
  \in C(\Omega\setminus N)$ it holds that $\mathbb{P}(x\in \Fix
  T_{\xi}) = 1$, i.e.\ $x \in C$.\\ Consider now the direction
  "$\subset$".  Let $Q$ be a dense and countable subset of $C$ (exists
  by \cref{thm:dense_sets_separable_space}). Since for each $q\in Q$,
  $\mathbb{P}(q\in \Fix T_{\xi}) = 1$, there is $N_{q} \subset\Omega$
  with $\mathbb{P}(N_{q}) = 0$ and $q \in C(\Omega\setminus
  N_{q})$. Set $N = \bigcup_{q\in Q}N_{q}$, then $\mathbb{P}(N)=0$ and
  $q\in C(\Omega\setminus N)$ for all $q\in Q$.\\
  Now let $c \in C$, so $\exists (q_{n})_{n\in\mathbb{N}} \subset Q$
  with $q_{n} \to c$ as $n \to \infty$. Since, for all $i\in I$, $\Fix
  T_{i}$ is closed by continuity of $T_{i}$, we get $c = \lim_{n\to
    \infty} q_{n} \in C(\Omega\setminus N)$.\\
  The set $C(\Omega\setminus N)$ is defined as intersection over
  closed sets and hence closed itself.
\end{proof}
\begin{rem}[interpretation]
  \cref{lemma:equiv_SCFP_CFP} shows that the feasible set $C$ in the
  separable case can be written as intersection of a selection of sets
  $\Fix T_{\xi(\omega)}$ as in the deterministic formulation of the
  fixed point problem, but where $\omega \in \Omega \setminus N$ for a
  nullset $N \subset \Omega$. In fact $C(\Omega)$ is in general a
  proper subset of $C = C(\Omega\setminus N)$ or can even be
  empty. But note that, even though the 
construction of $C$ in \cref{lemma:equiv_SCFP_CFP} appears to 
depend on the random variable $\xi$, in fact $C$ only depends on the 
{\em distribution} $\mathbb{P}^{\xi}$ by definition.
  Furthermore, in the context of more general Markov chains, we have,
  \[
  (c\in C)\qquad p(c, \{c\})=\mathbb{P}(T_{\xi}c\in\{c\})=
  \mathbb{P}(\Omega\setminus N)=1.
  \]
  Hence
  \[
  (A\in\mathcal{B}(G))\qquad \delta_c\mathcal{P}(A)=p(c,
  A)=\1_A(c)=\delta_c(A).
  \]
  In other words, the delta function $\delta_c$ for $c\in C$ is an
  invariant measure for $\mathcal{P}$.
\end{rem}

\begin{cor}[$\mathbb{P}^{\xi}$ nullset, separable
  space] \label{cor:Pxi_nullset} Under the assumptions of
  \cref{lemma:equiv_SCFP_CFP} there exists a $\mathbb{P}$-nullset $N$
  with $C=C(\Omega\setminus N)$, such that $\xi(N) := \mysetc{
    \xi(\omega)}{ \omega \in N}$ is a $\mathbb{P}^{\xi}$-nullset,
  where we denote $\mathbb{P}^{\xi} = \mathbb{P}(\xi \in \cdot)$, and
  it satisfies
  \begin{align*}
    C = \bigcap_{ i \in \xi(\Omega) \setminus \xi(N)} \Fix T_{i}.
  \end{align*}
\end{cor}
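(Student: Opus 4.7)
The plan is to retain the nullset $N$ actually constructed in the proof of \cref{lemma:equiv_SCFP_CFP} rather than invoke an abstract one, and then trace how $N$ arises as the preimage under $\xi$ of a measurable subset of $I$. Concretely, with $Q\subset C$ a dense countable subset, the proof of \cref{lemma:equiv_SCFP_CFP} builds $N = \bigcup_{q\in Q} N_q$ from the events $N_q = \{\omega : q \notin \Fix T_{\xi(\omega)}\}$. The first step is to rewrite $N_q = \xi^{-1}(A_q)$, where $A_q := \{i\in I : q \notin \Fix T_i\} = \Phi(q,\cdot)^{-1}(G\setminus\{q\})$, and to note that $A_q\in\mathcal{I}$ because the slice $\Phi(q,\cdot)$ is measurable by \nameref{ass:1}.

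The second step is to set $\tilde A := \bigcup_{q\in Q} A_q \in \mathcal{I}$, so that $N = \xi^{-1}(\tilde A)$, and to observe $\mathbb{P}^{\xi}(A_q) = \mathbb{P}(N_q) = 0$ for every $q\in Q$. Countable subadditivity yields $\mathbb{P}^{\xi}(\tilde A) = 0$, and from the general identity $\xi(\xi^{-1}(\tilde A)) = \tilde A \cap \xi(\Omega)$ one then obtains $\xi(N)\subset \tilde A$, witnessing that $\xi(N)$ is a $\mathbb{P}^{\xi}$-nullset.

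The third step is the set-theoretic identity $\xi(\Omega)\setminus\xi(N) = \xi(\Omega\setminus N)$. The inclusion $\supset$ is automatic; for $\subset$, if $i = \xi(\omega)\in \xi(\Omega)\setminus\xi(N)$, then necessarily $\omega\notin N$, for otherwise $i\in\xi(N)$. With this identity and \cref{lemma:equiv_SCFP_CFP} in hand, the claimed representation follows by rewriting
\begin{align*}
C \;=\; \bigcap_{\omega\in\Omega\setminus N}\Fix T_{\xi(\omega)} \;=\; \bigcap_{i\in \xi(\Omega\setminus N)}\Fix T_i \;=\; \bigcap_{i\in \xi(\Omega)\setminus\xi(N)}\Fix T_i.
\end{align*}

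The only real obstacle is a measurability concern: the image $\xi(N)$ need not lie in $\mathcal{I}$ for a generically chosen nullset $N$. This is circumvented by the preimage construction, since $\xi(N) = \tilde A\cap\xi(\Omega)$ then sits inside the measurable $\mathbb{P}^{\xi}$-nullset $\tilde A$. The moral is that one should not start from an abstract nullset supplied by \cref{lemma:equiv_SCFP_CFP}, but should exploit the explicit preimage structure of the $N$ built there.
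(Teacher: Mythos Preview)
Your approach is essentially the paper's own, just written in complementary language: where the paper works with the ``good'' index set $A=\bigcap_{q\in Q}\{i:T_iq=q\}$ and sets $N=\Omega\setminus\xi^{-1}A$, you work with the ``bad'' set $\tilde A=\bigcup_{q\in Q}\{i:T_iq\neq q\}=I\setminus A$ and obtain the \emph{same} $N=\xi^{-1}\tilde A$. Your observation that the concrete $N$ built in the proof of \cref{lemma:equiv_SCFP_CFP} already has preimage structure is exactly the point.

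There is, however, a slip in your Step~3. For any map $\xi$ and any subset $N\subset\Omega$ one always has $\xi(\Omega\setminus N)\supset\xi(\Omega)\setminus\xi(N)$; this is the inclusion $\subset$ in your displayed identity, and it is precisely the direction you spell out. The inclusion you label ``automatic'', namely $\xi(\Omega)\setminus\xi(N)\supset\xi(\Omega\setminus N)$, is \emph{not} true for arbitrary $N$: it fails whenever two points, one in $N$ and one outside, share the same image. It does hold here, but only because $N=\xi^{-1}\tilde A$ is a full preimage: if $\omega\notin N$ then $\xi(\omega)\notin\tilde A$, and since $\xi(N)=\tilde A\cap\xi(\Omega)\subset\tilde A$ this forces $\xi(\omega)\notin\xi(N)$. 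You have all the ingredients for this in Step~2; just swap which direction you call automatic and supply this one-line argument for the other.
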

\begin{proof}
  We will construct a $\mathbb{P}$-nullset $N$ for which
  $\xi(\Omega\setminus N) = \xi(\Omega) \setminus \xi(N)$, where
  $\xi(N)$ is a $\mathbb{P}^{\xi}$-nullset, in that case immediately
  follows that
  \begin{align*}
    \bigcap_{\omega \in \Omega \setminus N} \Fix T_{\xi(\omega)} =
    \bigcap_{ i \in \xi(\Omega) \setminus \xi(N)} \Fix T_{i}.
  \end{align*}
  Let $A_{x}:=\mysetc{i\in I}{T_{i}x = x}$ for $x \in G$, then
  analogously to \cref{rem:feas_set_measurable}
  \begin{align*}
    A_{x} = \mysetc{i\in I}{(x,i) \in \Phi^{-1} \{x\}} \in \mathcal{I}
  \end{align*}
  and so is $A := \bigcap_{c \in C} A_{c} = \bigcap_{q \in Q} A_{q}$
  as countable intersection of measurable sets ($Q \subset C$ dense
  and countable, see proof of \cref{lemma:equiv_SCFP_CFP}). Let
  $\tilde N$ be the $\mathbb{P}$-nullset from
  \cref{lemma:equiv_SCFP_CFP}, i.e.\ $C=C(\Omega\setminus\tilde N)$,
  note that due to
  \begin{align*}
    C = \bigcap_{\omega \in \Omega \setminus \tilde N} \Fix
    T_{\xi(\omega)} = \bigcap_{i \in \xi(\Omega\setminus\tilde N)}
    \Fix T_{i} \neq \emptyset
  \end{align*}
  it holds $\xi(\Omega\setminus \tilde N) \subset A_{c} \neq \emptyset
  $, for all $c \in C$. Set $N:= \Omega \setminus \xi^{-1}A$, then
  from $\Omega\setminus \tilde N \subset \xi^{-1}A$ follows $N \subset
  \tilde N$ is a $\mathbb{P}$-nullset and
  \begin{align*}
    \mathbb{P}^{\xi}(A) = \mathbb{P}(\xi^{-1}A) \ge
    \mathbb{P}(\Omega\setminus \tilde N) =1,
  \end{align*}
  i.e.\ $\mathbb{P}^{\xi}(\xi(N))=1-\mathbb{P}^{\xi}(A)=0$. By
  definition of $A$ we have for $\omega \in \xi^{-1}A$, that any $c
  \in C$ satisfies $c \in \Fix T_{\xi(\omega)}$, so it follows $C
  \subset C(\xi^{-1}A)$. Due to $C(\Omega \setminus N) \subset
  C(\Omega \setminus \tilde N)$ holds
  \begin{align*}
    C = \bigcap_{\omega \in \xi^{-1}A} \Fix T_{\xi(\omega)} =
    \bigcap_{i \in \xi(\xi^{-1} A)} \Fix T_{i} = \bigcap_{ i \in
      \xi(\Omega) \setminus \xi(N)} \Fix T_{i}.
  \end{align*}
  Note that from $N=\Omega\setminus\xi^{-1}A$ follows
  $\xi(N)=\xi(\Omega)\setminus\xi(\xi^{-1}A)$.
\end{proof}

  If $\xi$ is not surjective, then $\xi(\Omega) \neq I$. In that case,
  there is a $\mathbb{P}^{\xi}$-nullset $\xi(N)$ of indices in $I$,
  that are not needed to characterize the fixed point
  set, and these indices can be removed from the index set $I$.
  Note also that, in general, the $\mathbb{P}$-nullsets occurring in
  \cref{lemma:equiv_SCFP_CFP} and \cref{cor:Pxi_nullset} are
  different. If there is $N \subset \Omega$ with $C=C(\Omega\setminus
  N)$, then it need not be the case that $C= \bigcap_{i \in \xi(\Omega)\setminus
    \xi(N)} \Fix T_{i}$.

 In the context of the iterates $X_k$ of \cref{algo:SSM} in
many of the results below we construct the set $N$ in
\cref{lemma:equiv_SCFP_CFP} as follows:
\begin{equation}\label{eq:N}
  N = \bigcup_{k}N_{k}\quad\mbox{ where }N_k\equiv \Omega\setminus
  \{\omega\in\Omega~|~T_{\xi_{k}(\omega)}c=c~\forall c\in C \}.
\end{equation}
From \cref{lemma:equiv_SCFP_CFP} we have that $N_k$ is a set of
measure zero, hence so is $N$.

\section{Convergence Analysis}
\label{sec:conveergence}

We achieve convergence of iterated random functions for consistent
stochastic feasibility in several different settings under different
assumptions on the metric spaces and the mappings $T_i$ ($i\in I$).
The main properties of the mappings we consider are:
\begin{itemize}
\item quasi-nonexpansive mappings, i.e.
  \begin{align}\label{eq:qne}
    (\forall x\notin \Fix T_{i})(\forall y \in \Fix T_{i})\qquad
    d(T_{i}x,y) \le d(x,y).
  \end{align}
\item paracontractions, i.e.\ $T_{i}$ is continuous and
  \begin{align}\label{eq:pc}
    (\forall x\notin \Fix T_{i})(\forall y\in \Fix T_{i})\qquad
    d(T_{i}x,y) < d(x,y);
  \end{align}
\item nonexpansive mappings, i.e.
  \begin{align}\label{eq:ne}
    (\forall x,y \in G) \qquad d(T_{i} x, T_{i} y) \le d(x,y);
  \end{align}
\item averaged mappings on a normed linear space $\mathcal{H}$, i.e.\
  mappings $\mymap{T}{\mathcal{H}}{\mathcal{H}}$ for which there
  exists an $\alpha\in(0,1)$ such that
  \begin{align}\label{d:averaged}
    (\forall x,y\in\mathcal{H})\qquad \norm{T x - T y}^{2} +
    \frac{1-\alpha}{\alpha}\norm{ (x - Tx) - (y -T y)}^{2} \le
    \norm{x-y}^{2}.
  \end{align}
\end{itemize}
Note that for a quasi-nonexpansive mapping $\mymap{T}{G}{G}$ the
condition $x\in \Fix T$ implies that $d(T x,y) = d(x,y)$ for all $y
\in G$. The set of quasi-nonexpansive mappings contains the
paracontractions and the nonexpansive mappings. The set of projectors
onto convex sets or more generally the set of averaged mappings on a
Hilbert space $\mathcal{H}$ is contained in both the set of
nonexpansive mappings and the set of paracontractions \cite[Remark
4.24 and 4.26]{Bauschke2011}.  For an example of a paracontraction
that is not averaged see \cref{eg:Huber} and \cref{sec:paracont}.
Averaged mappings were first used in the work of Mann, Krasnoselski,
Edelstein, Gurin, Polyak and Raik who wrote seminal papers in the
analysis of (firmly) nonexpansive and averaged mappings
\cite{mann1953mean, krasnoselski1955, edelstein1966, Gubin67} although
the terminology ``averaged'' wasn't coined until sometime later
\cite{BaiBruRei78}.

\begin{example}\label{eg:Huber}
  Let $\mymap{f}{\mathbb{R}}{\mathbb{R}_{+}}$ be continuous. Let
  $f(0)=0$ and $\abs{f(x)} < \abs{x}$ for all $x \in \mathbb{R} \setminus
  \{0\}$, then $f$ is paracontractive. This includes also convex
  functions, e.g.\ Huber functions, which are not averaged in general
  (see \cref{sec:paracont}). For other examples on
  $\mathbb{R}^{n}$ also see \cref{sec:paracont}.
\end{example}

\subsection{RFI on a compact metric space}
\label{sec:compact metr}

In this section we establish convergence of the RFI on a compact
metric space.  The next example illustrates why nonexpansivity alone
does not suffice to guarantee convergence to the intersection set $C$.
\label{sec:SSMcompact}

\begin{example}[nonexpansive mappings, negative result]
  For non-expansive mappings in general, one cannot expect that the
  support of every invariant measure is contained in the feasible set
  $C$.  Consider a rotation in positive direction in $\mathbb{R}^{2}$
  \begin{align*}
    A=
    \begin{pmatrix}
      \cos(\varphi) &-\sin(\varphi)\\
      \sin(\varphi)&\cos(\varphi)\\
    \end{pmatrix}, \qquad \varphi \in (0,2 \pi),
  \end{align*}
  and set $\xi= 1$ and $I=\{1\}$, $T_{1}=A$.  Then $C=\{0\}$ and,
  since $\norm{A} = 1$, $A$ is nonexpansive, but $\norm{Ax}=\norm{x}$
  for all $x \in \mathbb{R}^{2}$. So the (deterministic) iteration
  $X_{k+1}=A X_{k}$ will not converge to $0$, whenever $X_{0} \sim
  \delta_{x}$, $x\neq 0$.
\end{example}

A sufficient requirement on the mappings $T_{i}$ to ensure convergence
of \cref{algo:SSM} is paracontractiveness. The next Lemma is the main
ingredient for proving a.s.\ convergence of $(X_{k})$ to a random
point in $C$.  The support of a probability measure $\nu \in
\mathscr{P}(G)$ is the smallest closed set $S \subset G$, for which
$\nu(G\setminus S)=0$ (see also \cref{thm:supp_measure} for equivalent
representations); we then write $S = \supp \nu$.

\begin{lemma}[invariant measures for
  para-contractions] \label{lemma:invMeasforParacontra} Under the
  standing assumptions and if $T_{i}\,(i\in I)$ is paracontracting on
  a compact metric space, then the set of invariant measures for
  $\mathcal{P}$ is $\mysetc{\pi \in \mathscr{P}(G)}{\supp \pi \subset
    C}$.
\end{lemma}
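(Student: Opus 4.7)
The plan is to prove the two inclusions separately. For the inclusion $\supset$, suppose $\pi\in\mathscr{P}(G)$ with $\supp\pi\subset C$. Since $C$ is closed (\cref{lemma:equiv_SCFP_CFP}) we have $\pi(G\setminus C)=0$, and the remark following \cref{cor:Pxi_nullset} gives $p(c,\cdot)=\delta_c$ for every $c\in C$. Hence
\begin{equation*}
\pi\mathcal{P}(A) \;=\; \int_G p(x,A)\,\pi(\dd{x}) \;=\; \int_C \1_A(x)\,\pi(\dd{x}) \;=\; \pi(A\cap C) \;=\; \pi(A),
\end{equation*}
so $\pi$ is invariant.

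For the harder inclusion $\subset$, I would introduce the merit function $f(x):=d(x,C)$, which is $1$-Lipschitz and bounded on the compact space $G$, so lies in $C_b(G)$. The strategy is to prove $\mathcal{P}f\le f$ pointwise with strict inequality off $C$, and then combine this with invariance to deduce $\pi(G\setminus C)=0$. To establish $\mathcal{P}f\le f$, I would apply \cref{cor:Pxi_nullset} to fix a $\mathbb{P}^{\xi}$-nullset $\xi(N)$ such that $C\subset\Fix T_i$ for every $i\in I\setminus\xi(N)$. For such $i$ and any $x\in G$, quasi-nonexpansivity of $T_i$ (implied by paracontractivity) gives $d(T_ix,c)\le d(x,c)$ for every $c\in C$; taking the infimum over $c$ yields $d(T_ix,C)\le d(x,C)$, and integrating against $\mathbb{P}^{\xi}$ delivers $\mathcal{P}f(x)\le f(x)$.

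The crux is sharpening this to strict inequality when $x\notin C$. In that case $B_x:=\{i\in I:T_ix\neq x\}$ has positive $\mathbb{P}^{\xi}$-measure by definition of $C$, and compactness of $G$ together with closedness of $C$ yields a nearest point $c_x\in C$ with $d(x,c_x)=d(x,C)$. For every $i\in B_x\setminus\xi(N)$ one has $x\notin\Fix T_i$ while $c_x\in\Fix T_i$, so paracontractivity gives
\begin{equation*}
d(T_ix,C) \;\le\; d(T_ix,c_x) \;<\; d(x,c_x) \;=\; d(x,C),
\end{equation*}
while the remaining indices contribute only $d(T_ix,C)\le d(x,C)$. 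Since $B_x\setminus\xi(N)$ still has positive $\mathbb{P}^{\xi}$-measure, integration produces $\mathcal{P}f(x)<f(x)$.

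To finish, the duality $\int\mathcal{P}f\,d\pi=\int f\,d(\pi\mathcal{P})$ combined with invariance $\pi\mathcal{P}=\pi$ yields $\int(f-\mathcal{P}f)\,d\pi=0$, and pointwise nonnegativity together with strict positivity on $G\setminus C$ then forces $\pi(G\setminus C)=0$, i.e.\ $\supp\pi\subset C$. The hard part I expect is the third step: coordinating the paracontraction inequality (which concerns an \emph{individual} $T_i$ and requires $c_x\in\Fix T_i$) with the single $\mathbb{P}^{\xi}$-nullset supplied by \cref{cor:Pxi_nullset}, so that one nearest point $c_x$ simultaneously witnesses strict decrease across a positive-measure set of indices.
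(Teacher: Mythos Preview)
Your argument is correct and the worry you flag at the end is unfounded: once \cref{cor:Pxi_nullset} supplies a single $\mathbb{P}^{\xi}$-null set outside of which $C\subset\Fix T_i$, the nearest point $c_x$ lies in $\Fix T_i$ for all remaining $i$ simultaneously, so the strict paracontraction inequality holds across all of $B_x$ minus that null set; and a strictly positive integrand on a set of positive measure integrates to something strictly positive (pass to the sets $\{f(x)-d(T_ix,C)>1/n\}$).

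Your route is genuinely different from the paper's.  You use a Lyapunov-function argument: set $f=d(\cdot,C)$, show $\mathcal{P}f\le f$ with strict inequality off $C$, and exploit the duality $\int\mathcal{P}f\,d\pi=\int f\,d(\pi\mathcal{P})=\int f\,d\pi$ to force $\pi(\{f>\mathcal{P}f\})=0$.  The paper instead argues by contradiction at the level of sets: it picks a point $s\in\supp\pi$ maximizing $d(\cdot,C)$, constructs the level set $K(\epsilon)=\{d(\cdot,C)>d_{\max}-\epsilon\}$, and uses continuity of the $T_i$ to find a ball around $s$ on which $p(x,K(\epsilon/2))<1$ with uniformly positive probability; invariance then yields the absurdity $\pi(K(\epsilon/2))<\pi(K(\epsilon/2))$.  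Your approach is shorter and more in the spirit of standard ergodic-theoretic reasoning; the paper's approach is more hands-on and avoids the (easy but implicit) step that strict pointwise inequality on a positive-measure set survives integration, at the cost of a more delicate geometric construction involving continuity of $T_i$ on a ball.
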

\begin{proof}
  It is clear that $\pi \in \mathscr{P}(G)$ with $\supp \pi \subset C$
  is invariant, since $p(x,\{x\}) = \mathbb{P}(T_{\xi}x \in \{x\}) =
  \mathbb{P}(x \in \Fix T_{\xi}) = 1$ for all $x \in C$ and hence
  $\pi\mathcal{P}(A) = \int_{C}p(x,A) \pi(\dd{x}) = \pi(A)$ for all $A
  \in \mathcal{B}(G)$.

  The other implication is not so immediate.  Suppose $ \supp \pi
  \setminus C \neq \emptyset$ for some invariant measure $\pi$ of
  $\mathcal{P}$.  Then due to compactness of $\supp \pi$ (as it is
  closed in $G$) we can find $s \in \supp \pi$ maximizing the
  continuous function $\dist(\cdot,C)$ on $G$. So $d_{\text{max}} =
  \dist(s,C) > 0$. We show that the probability mass around $s$ will
  be attracted to the feasible set $C$, implying that the invariant
  measure loses mass around $s$ in every step, which yields a
  contradiction.

  Define the set of points being more than $d_{\text{max}}-\epsilon$
  away from $C$:
  \begin{align*}
    K(\epsilon) := \mysetc{x \in G}{ \dist(x,C) > d_{\text{max}} -
      \epsilon }, \qquad \epsilon \in (0,d_{\text{max}}).
  \end{align*}
  This set is measurable, i.e.\ $K(\epsilon) \in \mathcal{B}(G)$,
  because it is open. Let $M(\epsilon)$ be the event in $\mathcal{F}$,
  where $T_{\xi}s$ is at least $\epsilon$ closer to $C$ than $s$,
  i.e.\
  \begin{align*}
    M(\epsilon) := \mysetc{ \omega \in \Omega}{ \dist(T_{\xi(\omega)}
      s,C) \le d_{\text{max}} - \epsilon}.
  \end{align*}
  There are two possibilities, either there is an $\epsilon \in
  (0,d_{\text{max}})$ with $\mathbb{P}(M(\epsilon)) >0$ or no such
  $\epsilon$ exists.  In the latter case we have $\dist(T_{\xi}s,C) =
  d_{\text{max}} =\dist(s,C)$ a.s.\ by paracontractiveness of
  $T_{i}$. By compactness of $C$ there exists $c \in C$ such that
  $0<d_{\text{max}} = d(s,c)$.  Hence the probability of the set of
  $\omega\in\Omega$ such that $s \not \in \Fix T_{\xi(\omega)}$ is
  positive and so is the probability that $\dist(T_{\xi(\omega)}s, C)
  \le d(T_{\xi(\omega)}s,c) < d(s,c)$ - a contradiction.

  So it must hold that there is an $\epsilon \in (0,d_{\text{max}})$
  with $\mathbb{P}(M(\epsilon)) >0$.  In view of continuity of the
  mappings $T_{i}$ around $s$, $i \in I$, define
  \begin{align*}
    A_{n} : = \mysetc{\omega \in M(\epsilon)}{ d(T_{\xi(\omega)}
      x,T_{\xi(\omega)} s) \le \tfrac{\epsilon}{2} \quad \forall
      x\in\mathbb{B}(s,\tfrac{1}{n})}\quad (n \in \mathbb{N}).
  \end{align*}
  It holds that $A_{n} \subset A_{n+1}$ and $\mathbb{P}(\bigcup_{n}
  A_{n}) = \mathbb{P}(M(\epsilon))$. So in particular there is an $m
  \in \mathbb{N}$, $m \ge 2/\epsilon$ with $\mathbb{P}(A_{m}) > 0$.
  For all $x \in \mathbb{B}(s,\tfrac{1}{m})$ and all $\omega\in A_{m}$
  we have
  \begin{align*}
    \dist(T_{\xi(\omega)} x, C) \le d(T_{\xi(\omega)}x,
    T_{\xi(\omega)} s) + \dist(T_{\xi(\omega)} s,C) \le d_{\text{max}}
    - \frac{\epsilon}{2},
  \end{align*}
  which means $T_{\xi(\omega)}x \in G \setminus
  K(\tfrac{\epsilon}{2})$.  Hence, in particular we conclude that
  \[
  p(x,K(\tfrac{\epsilon}{2})) < 1 \quad\forall x \in
  \mathbb{B}(s,\tfrac{1}{m}).
  \]
  Since $p(x,K(\epsilon)) = 0$ for $x \in G$ with $\dist(x,C) \le
  d_{\text{max}} - \epsilon$ due to paracontractiveness, it holds by
  invariance of $\pi$ that
  \begin{align*}
    \pi(K(\epsilon)) = \int_{G} p(x,K(\epsilon)) \pi(\dd{x}) =
    \int_{K(\epsilon)} p(x,K(\epsilon)) \pi(\dd{x}).
  \end{align*}
  It follows, then, that
  \begin{align*}
    \pi(K(\tfrac{\epsilon}{2})) &= \int_{K(\tfrac{\epsilon}{2})}
    p(x,K(\tfrac{\epsilon}{2})) \pi(\dd{x}) \\ &=
    \int_{\mathbb{B}(s,\tfrac{1}{ m})}p(x, K(\tfrac{ \epsilon}{2}) )
    \pi( \dd{x}) + \int_{K(\tfrac{\epsilon}{2}) \setminus
      \mathbb{B}(s,\tfrac{1}{ m})} p(x,K(\tfrac{\epsilon}{2}))
    \pi(\dd{x}) \\ &< \pi( \mathbb{B}(s ,\tfrac{1}{m})) +
    \pi(K(\tfrac{\epsilon}{2}) \setminus \mathbb{B}(s, \tfrac{1}{
      m}))= \pi(K( \tfrac{ \epsilon}{2}))
  \end{align*}
  which is a contradiction.  So the assumption that $\supp
  \pi\setminus C \neq \emptyset$ is false, i.e.\ $\supp \pi \subset C$
  as claimed.
\end{proof}

\begin{thm}[Theorem 4.22 in \cite{Hairer2006}]\label{thm:Feller}
  Under Standing Assumption 1, if $T_{i}\, (i\in I)$ is continuous,
  then the Markov operator $\mathcal{P}$ is Feller, i.e.\
  $\mathcal{P}: C_{b}(G)\to C_{b}(G)$.
\end{thm}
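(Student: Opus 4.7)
The plan is to verify the two defining properties of $C_b(G)$ for $\mathcal{P}f$ whenever $f\in C_b(G)$, namely boundedness and continuity. I would start from the representation
\[
\mathcal{P}f(x) = \int_\Omega f(T_{\xi(\omega)} x)\,\mathbb{P}(\dd{\omega}) = \int_I f(T_i x)\,\mathbb{P}^{\xi}(\dd{i})
\]
already recorded just after the definition of $\mathcal{P}$. Boundedness is immediate from the fact that $\mathbb{P}$ is a probability measure: $\sup_{x\in G}\abs{\mathcal{P}f(x)}\leq \norm{f}_\infty$.

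For continuity, since $(G,d)$ is a metric space it suffices to check sequential continuity. I would fix $x\in G$, take an arbitrary sequence $x_n\to x$ in $G$, and apply Lebesgue's dominated convergence theorem to the integrands $g_n(\omega)\equiv f(T_{\xi(\omega)}x_n)$. Each $g_n$ is measurable: by \nameref{ass:1}\,(b), $(x,\omega)\mapsto T_{\xi(\omega)}x = \Phi(x,\xi(\omega))$ is the composition of measurable maps, and composing further with the Borel-measurable $f$ gives measurability of $g_n$. Pointwise convergence $g_n(\omega)\to f(T_{\xi(\omega)}x)$ for every $\omega\in\Omega$ follows from continuity of each $T_i$ (hence of $T_{\xi(\omega)}$) and of $f$. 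Finally, the constant $\norm{f}_\infty$ serves as an integrable majorant on $(\Omega,\mathcal{F},\mathbb{P})$. Passing to the limit gives $\mathcal{P}f(x_n)\to \mathcal{P}f(x)$, which proves continuity.

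The only point that requires attention is the measurability of the integrand, which is precisely why \nameref{ass:1}\,(b) was imposed; once this is in hand, the argument is a routine interchange of limit and integral using the bounded convergence theorem. No additional structure on $G$ (such as compactness or separability) and no extra property of the $T_i$ beyond continuity are required for the Feller property to hold.
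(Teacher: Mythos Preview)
Your proof is correct and follows essentially the same route as the paper's own argument: both establish boundedness directly from $\|f\|_\infty$ and obtain continuity by applying Lebesgue's dominated convergence theorem along a sequence $x_n\to x$, using continuity of each $T_i$ for pointwise convergence of the integrands and $\|f\|_\infty$ as the dominating constant. The only difference is cosmetic---you integrate over $\Omega$ while the paper integrates over $I$ via $\mathbb{P}^\xi$---and you spell out the measurability verification a bit more carefully than the paper does.
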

\begin{proof}
  By continuity of $T_{i}, \, i \in I$, the update function $\Phi$ is
  continuous in the first argument. It follows for $f \in C_{b}(G)$
  and $x_{n} \to x$ as $n\to \infty$ by Lebesgue's Dominated
  Convergence Theorem
  \begin{align*}
    \mathcal{P}f(x_{n}) = \int_{I} f(\Phi(x_{n},u))
    \mathbb{P}^{\xi}(\dd{u}) \to \int_{I} f(\Phi(x,u))
    \mathbb{P}^{\xi}(\dd{u}) = \mathcal{P}f(x).
  \end{align*}
  Note that $\mathcal{P}f$ is bounded, whenever $f$ is a bounded
  function.
\end{proof}

\begin{thm}[almost sure convergence for a compact metric
  space] \label{thm:asCVG_compMetricSpace} Under the standing
  assumptions, let $T_{i}$ be paracontractive, $i \in I$, and let
  $(G,d)$ be a compact metric space. Then the sequence $(X_{k})$ of
  random variables generated by \cref{algo:SSM} converges almost
  surely to a random variable $X_{\mu} \in C$ depending on the initial
  distribution $\mu$.
\end{thm}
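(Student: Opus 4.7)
The plan is to combine pathwise Fejér monotonicity with the description of invariant measures in \cref{lemma:invMeasforParacontra}, and then extract a unique a.s.\ limit via compactness of $G$.

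First, using the $\mathbb{P}$-null set $N=\bigcup_k N_k$ from \eqref{eq:N}, every $\omega\in\Omega\setminus N$ satisfies $T_{\xi_k(\omega)}c=c$ for all $c\in C$ and all $k$. Since paracontractions are quasi-nonexpansive, this yields $d(X_{k+1}(\omega),c)\le d(X_k(\omega),c)$, so for each fixed $c\in C$ there is an a.s.\ limit $d(X_k,c)\downarrow D_c$; taking the infimum over $c\in C$ likewise gives that $\dist(X_k,C)$ is a.s.\ non-increasing, with limit $L\ge 0$.

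Second, and this is the heart of the argument, I would show $L=0$ a.s.\ by passing to invariant measures. Since $G$ is compact, $\mathscr{P}(G)$ is weakly sequentially compact, so some subsequence of the Cesaro averages $\nu_k=\tfrac{1}{k}\sum_{j=1}^k\mathcal{L}(X_j)$ converges weakly to some $\pi\in\mathscr{P}(G)$. The telescoping identity $\nu_k\mathcal{P}-\nu_k=\tfrac{1}{k}(\mathcal{L}(X_{k+1})-\mathcal{L}(X_0))$ combined with the Feller property (\cref{thm:Feller}) shows that $\pi$ is invariant, so $\supp\pi\subset C$ by \cref{lemma:invMeasforParacontra}. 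For each $\epsilon>0$ the open set $U_\epsilon:=\{x\in G\,|\,\dist(x,C)<\epsilon\}$ contains $C\supset\supp\pi$, hence $\pi(U_\epsilon)=1$, and the Portmanteau theorem yields $\liminf_j\nu_{k_j}(U_\epsilon)\ge 1$. Quasi-nonexpansivity also gives $p(x,U_\epsilon)=1$ for every $x\in U_\epsilon$, so $\mathcal{L}(X_k)(U_\epsilon)$ is non-decreasing in $k$ and majorizes $\nu_k(U_\epsilon)$. Thus $\mathcal{L}(X_k)(U_\epsilon)\to 1$, i.e.\ $\dist(X_k,C)\to 0$ in probability, and combined with the a.s.\ monotonicity from the first step this upgrades to $\dist(X_k,C)\to 0$ a.s.

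Finally, to identify the limit, fix $\omega$ outside the null set; then $(X_k(\omega))$ lies in the compact metric space $G$ and admits some cluster point $Y(\omega)$. Since $\dist(Y(\omega),C)=L(\omega)=0$ and $C$ is closed, $Y(\omega)\in C$, so $d(X_k,Y(\omega))\to D_{Y(\omega)}$ exists by the first step, while along the cluster subsequence $d(X_{k_j},Y(\omega))\to 0$. Hence $D_{Y(\omega)}=0$, the cluster point of $(X_k(\omega))$ is unique, and the sequence converges to $X_\mu(\omega):=Y(\omega)\in C$ a.s. The principal obstacle is the second step: Fejér monotonicity of $d(X_k,c)$ by itself does not localize the sequence to $C$, and paracontractiveness supplies only strict but non-uniform shrinking. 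The trick is to bridge the global invariant-measure information from \cref{lemma:invMeasforParacontra} with the marginal laws $\mathcal{L}(X_k)$, which works because quasi-nonexpansivity makes $k\mapsto\mathcal{L}(X_k)(U_\epsilon)$ monotone, allowing the liminf-information for the Cesaro averages to be transferred to the marginals themselves.
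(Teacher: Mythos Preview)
Your argument is correct and follows essentially the same architecture as the paper's: pathwise Fej\'er monotonicity via the null set $N$, weak subsequential limits of the Ces\`aro averages $\nu_k$ being invariant (hence supported in $C$ by \cref{lemma:invMeasforParacontra}), a monotonicity transfer from the averages to the marginals, and then the standard cluster-point uniqueness argument. The only difference is in how you pass from $\nu_{n_k}\to\pi$ to $\dist(X_k,C)\to 0$ a.s.: the paper integrates the bounded continuous function $\dist(\cdot,C)$ directly against $\nu_{n_k}$ to obtain $\mathbb{E}[\dist(X_{n_k},C)]\le \nu_{n_k}\dist(\cdot,C)\to 0$ (and then invokes monotonicity), which is slightly more direct than your Portmanteau route through the open sets $U_\epsilon$ and convergence in probability.
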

\begin{proof}
  Since $\mathcal{P}$ is Feller and $G$ compact,
  \cref{thm:construction_inv_meas} implies that any subsequence of
  $(\nu_{n})$, where $\nu_{n} = \tfrac{1}{n} \sum_{i=1}^{n}
  \mathcal{L}(X_{i})$, has a convergent subsequence and clusterpoints
  are invariant measures for $\mathcal{P}$. Let $(\nu_{n_{k}})$ be a
  convergent subsequence with limit $\pi$. So for the bounded and
  continuous function $\dist(\cdot,C)$ it holds that $\nu_{n_{k}}
  \dist(\cdot,C) \to \pi \dist(\cdot,C) = 0$ as $k \to \infty$ by weak
  convergence of the probability measures and the
  fact that, by \cref{lemma:invMeasforParacontra}, $\supp \pi \subset C$.\\
  Due to quasi-nonexpansiveness and \cref{lemma:equiv_SCFP_CFP} (a
  compact metric space is separable), we have a.s.\ (for all
  $\omega\notin N$ with $N$ given by \eqref{eq:N}) that $d(X_{k+1},c)
  \le d(X_{k},c)$ for all $c\in C$ and $k\in \mathbb{N}$, which
  implies $\dist(X_{k+1},C) \le \dist(X_{k},C)$ for all $k \in
  \mathbb{N}$ a.s.  It therefore follows that
  \[
  \mathbb{E}[ \dist(X_{n_{k}},C)] \le \tfrac{1}{n_{k}}
  \sum_{i=1}^{n_{k}} \mathbb{E}[\dist(X_{i},C)] =
  \nu_{n_k}\dist(\cdot, C)\to 0
  \]
  by monotonicity of $(\mathbb{E}[\dist(X_{k},C)])_{k}$.  This yields
  $\mathbb{E}[\dist(X_{k},C)] \to 0$ as $k \to \infty$.  Now since
  $(\dist(X_{k},C))_{k}$ is nonincreasing, it must be that
  $\dist(X_{k},C) \to 0$ a.s.  Hence for any cluster point
  $x_{\omega}$ of $(X_{k}(\omega))_{k}$ we have $x_{\omega} \in C$.
  This together with a.s.\ monotonicity of $(d(X_{k},c))_{k}$ for all
  $c \in C$ implies that $d(X_{k}(\omega), x_{\omega}) \to 0$ for any
  cluster-point $x_{\omega}$ of $(X_{k}(\omega))_{k}$, which implies
  the uniqueness of $x_{\omega}$.  In other words, $(X_{k})$ converges
  almost surely to a random variable $X_{\mu}$, with
  $X_{\mu}(\omega)=x_{\omega} \in C$, $\omega \not \in N$, as claimed.
\end{proof}

\subsection{Finite dimensional normed vector space}
\label{sec:findimnormvc}
The results for compact metric spaces can be applied, with minor
adjustments, to finite dimensional vector spaces.  In the following
let $(G,d) = (V,\norm{\cdot})$ be a finite dimensional normed vector
space over $\mathbb{R}$. This means in particular, that $V$ is also
complete and every closed and bounded set is compact (Heine-Borel
property) and all norms on $V$ are equivalent. So actually, since all
$n$-dimensional vector spaces are isomorphic, it is enough to study
convergence in $\mathbb{R}^{n}$ equipped with the euclidean norm
$\norm{\cdot}$.

The following result for $\mathbb{R}^{n}$ is a straight forward
application of Theorem \ref{thm:asCVG_compMetricSpace}.
\begin{thm}[almost sure convergence in
  $\mathbb{R}^{n}$] \label{thm:asCvg_Rn} Under the standing
  assumptions, let $\mymap{T_{i}}{\mathbb{R}^{n}}{\mathbb{R}^{n}}$ be
  paracontractive, $i \in I$. Then the sequence $(X_{k})$ of random
  variables generated by \cref{algo:SSM} converges almost surely to a
  random variable $X_{\mu} \in C$ depending on the initial
  distribution $\mu$.
\end{thm}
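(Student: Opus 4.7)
The plan is to reduce the claim to the compact metric space result \cref{thm:asCVG_compMetricSpace} by exploiting the boundedness of orbits guaranteed by quasi-nonexpansiveness. Fix any $c \in C$ (nonempty by Standing Assumption 2). Paracontractions are quasi-nonexpansive, so for every index $i$ with $c \in \Fix T_{i}$ one has $\|T_{i}x - c\| \le \|x - c\|$ for all $x \in \mathbb{R}^{n}$, which means each closed ball $B_{R} := \overline{\mathbb{B}}(c,R)$ is forward-invariant: $T_{i}(B_{R}) \subset B_{R}$. Invoking \cref{cor:Pxi_nullset}, we may discard the $\mathbb{P}^{\xi}$-nullset $\xi(N)$ of ``bad'' indices from $I$ without altering the law of the iterates, and thereby assume that $c \in \Fix T_{i}$ for every $i$ in the effective index set. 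By Heine--Borel, each $B_{R}$ is compact.

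Next, set $\Omega_{R} := \{X_{0} \in B_{R}\}$. By Standing Assumption 1.a), the initial state $X_{0}$ is independent of the driving sequence $(\xi_{k})$, so conditioning on $\Omega_{R}$ does not alter the i.i.d. law of the $\xi_{k}$. Hence under the conditional probability $\mathbb{P}(\,\cdot\mid\Omega_{R})$ (assuming $\mu(B_{R}) > 0$), the iterates $(X_{k})$ form a Markov chain on the compact metric space $(B_{R},\|\cdot\|)$ driven by the paracontractive mappings $T_{i}|_{B_{R}} : B_{R} \to B_{R}$, with initial law $\mu(\,\cdot\,\cap B_{R})/\mu(B_{R})$ and with feasible set $C \cap B_{R}$ containing $c$. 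Applying \cref{thm:asCVG_compMetricSpace} on $B_{R}$, we obtain that on $\Omega_{R}$ the sequence $(X_{k})$ converges almost surely to a random variable with values in $C \cap B_{R}$.

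Finally, $(\Omega_{R})_{R\in\mathbb{N}}$ is an increasing family whose union has full probability, since $X_{0}$ takes values in $\mathbb{R}^{n}$. Thus on the full-measure event $\bigcup_{R\in\mathbb{N}}\Omega_{R}$ we may set $X_{\mu}(\omega) := \lim_{k\to\infty} X_{k}(\omega)$; the limits obtained from different $R$ are trivially consistent since they are the same sequential limit, and $X_{\mu} \in C$ almost surely.

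The main conceptual obstacle is justifying that the conditioning on the initial event $\Omega_{R}$ preserves the Markov chain structure with the correctly restricted transition kernel so that \cref{thm:asCVG_compMetricSpace} genuinely applies. This is not deep---it follows directly from the independence clause Standing Assumption 1.a) together with the forward invariance of $B_{R}$ coming from quasi-nonexpansiveness at the anchor point $c$---but it is the step where the hypotheses of the compact-space theorem must be verified transparently. Everything else is bookkeeping: the $\mathbb{P}^{\xi}$-nullset cleanup via \cref{cor:Pxi_nullset} and the exhaustion $\mathbb{R}^{n} = \bigcup_{R\in\mathbb{N}} B_{R}$.
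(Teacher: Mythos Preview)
Your proof is correct and shares with the paper the same core reduction to \cref{thm:asCVG_compMetricSpace} via forward invariance of closed balls centred at a feasible anchor $c\in C$. Where you diverge is in the handling of general initial distributions $\mu$: the paper first treats Dirac initial conditions $\mu=\delta_{x}$ (so that the whole trajectory lives in a single compact ball), deduces $\delta_{x}\mathcal{P}^{k}\to\pi_{x}$ weakly, and then lifts to arbitrary $\mu$ by writing $\mu\mathcal{P}^{k}f=\int p^{k}(x,f)\,\mu(\dd x)$ and invoking dominated convergence, which along the way identifies the limiting invariant measure $\pi_{\mu}=\int\pi_{x}\,\mu(\dd x)$; only after that does it extract $\dist(X_{k},C)\to 0$ a.s.\ via the test function $\min\{\dist(\cdot,C),M\}$ and conclude pointwise convergence by Fej\'er monotonicity. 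Your conditioning-and-exhaustion argument on $\Omega_{R}=\{X_{0}\in B_{R}\}$ is more direct and dispenses with the weak-convergence detour and the explicit invariant measure; it buys brevity at the modest cost of not exhibiting $\pi_{\mu}$. Both routes ultimately rest on the same two ingredients: quasi-nonexpansiveness gives compact invariant balls, and \cref{thm:asCVG_compMetricSpace} does the work on each ball.
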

\begin{proof}
  First, suppose $\mu=\delta_{x}$ for $x\in \mathbb{R}^n$. Let $N$ be
  given by \eqref{eq:N}. The quasi-nonexpansiveness property gives us
  $\norm{X_{k+1} - c} \le \norm{X_{k} - c}$ for all $c \in C$ a.s.\
  (i.e.\ if $\omega \not \in N$). Letting $c\in C$ with
  $\dist(x,C)=\norm{x-c}$, this implies $X_{k} \in \cb(c,\norm{x-c})$,
  where $\cb(s,\epsilon) \subset \mathbb{R}^{n}$ is the closed ball
  around $s \in \mathbb{R}^{n}$ with radius $\epsilon$. The assertion
  $X_{k} \to X_{\delta_{x}}$ a.s.\ then follows from
  \cref{thm:asCVG_compMetricSpace}. Denote the corresponding invariant
  measure as $\pi_{x} := \mathcal{L}(X_{\delta_{x}})$.
  
  Suppose now that $\mu \in \mathscr{P}(\mathbb{R}^{n})$ is arbitrary.
  For $f \in C_{b}(\mathbb{R}^{n})$ one has $p^{k}(x,f) \le
  \norm{f}_{\infty}$ for all $k\in \mathbb{N}$ and $x \in
  \mathbb{R}^{n}$.  Note that $p^{k}(x,f)=\delta_x\mathcal{P}^{k}f$,
  and from the above argument $\delta_x\mathcal{P}^{k}\to \pi_x$ in
  the weak sense as $k\to \infty$.  Hence by Lebesgue's Dominated
  Convergence Theorem, we get
  \[
  \mu \mathcal{P}^{k}f = \int_{\mathbb{R}^{n}} p^{k}(x,f) \mu(\dd{x})
  \to \int_{\mathbb{R}^{n}} \pi_{x}f \mu(\dd{x}) =: \mu \pi_{x} f =:
  \pi_{\mu} f, \qquad \text{as }k\to\infty.
  \]
  We conclude that $\mathcal{L}(X_{k}) = \mu \mathcal{P}^{k} \to
  \pi_{\mu}$ weakly.  The measure $\pi_{\mu} = \mu \pi_{x}$ is an
  invariant probability measure for $\mathcal{P}$, since $\pi_{x}$ is
  a invariant probability measure for $\mathcal{P}$.

  Choosing $f = \min\{\dist(\cdot,C),M\} \in C_{b}(\mathbb{R}^{n})$
  with $M > 0$ yields $\mathcal{L}(X_{k})f \to \pi_{\mu}f = 0$.  Since
  $f(X_{k+1}) \le f(X_{k})$ a.s.\ and $\mathcal{L}(X_{k})f =
  \mathbb{E}[f(X_{k})] \to 0$, it holds that $f(X_{k}) \to 0$ a.s. In
  particular, $\dist(X_{k},C) \to 0$ a.s.  So for a converging
  subsequence $(X_{n_{k}}(\omega))_{k}$ with limit $x_{\omega}$ it
  holds that $x_{\omega} \in C$.  Moreover, since
  $(\norm{X_{k}-x_{\omega}})_{k}$ is monotone, actually $X_{k}(\omega)
  \to X_{\mu}(\omega):= \lim_{k} X_{k}(\omega)= x_{\omega} \in C$,
  $\omega \not \in N$.
\end{proof}

\subsection{Weak convergence in Hilbert spaces}
\label{sec:analysisSSM}

In this section $(G,d)$ is a Hilbert space
$(\mathcal{H},\act{\cdot,\cdot})$. Under the standing assumptions the
following extended-valued function
\begin{align*}
  R(x):= \mathbb{E}\left[\norm{x-T_{\xi}x}^{2}\right] = \int_{\Omega}
  \norm{x - T_{\xi(\omega)} x}^{2} \mathbb{P}(\dd{\omega}) = \int_{I}
  \norm{x- T_{u} x }^{2} \mathbb{P}^{\xi}(\dd{u})
\end{align*}
is measurable from $\mathcal{H}$ to $[0,\infty]$. Following
\cite{Nedic2010} we use this function to characterize convergence of
the consistent fixed point problem under the weaker assumption that
the mappings $T_{\xi}$ are {\em averaged} (see \cref{d:averaged}).

\begin{lemma}[properties of $R$ and $C$ for quasi-nonexpansive
  mappings] \label{lemma:prop_R} In addition to the standing
  assumptions, suppose that $T_{i}\, (i\in I)$ is quasi-nonexpansive and continuous.
  Then
  \begin{enumerate}[label=(\roman*)]
  \item\label{item:prop_R:C} $C = R^{-1}(0)$;
  \item\label{item:prop_C:Rfinite} $R$ is finite everywhere;
  \item \label{item:prop_C:Rcont} $R$ is continuous;
  \item\label{lemma:prop_C} $C$ is convex and closed.
  \end{enumerate}
\end{lemma}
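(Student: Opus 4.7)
My plan is to dispatch the four parts in the order (i), (ii), (iii), (iv), since (i)–(iii) feed into (iv). The structural idea is that $R$ is built from a single tractable integrand, and quasi-nonexpansiveness together with nonemptiness of $C$ (Standing Assumption 2) gives uniform control on that integrand, which delivers finiteness and continuity via dominated convergence; closedness of $C$ then follows for free, and only convexity of $C$ needs an extra Hilbert-space argument.

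For (i), I would note that $\|x - T_{\xi(\omega)} x\|^2 \ge 0$, so $R(x)=0$ iff $\|x-T_\xi x\|=0$ $\mathbb{P}$-a.s., which is exactly the definition of $C$. For (ii), the key estimate is that for any fixed $c\in C$ (such a $c$ exists by Standing Assumption 2), the nullset $N$ of \cref{lemma:equiv_SCFP_CFP} satisfies $c\in\Fix T_{\xi(\omega)}$ for all $\omega\notin N$, so by quasi-nonexpansiveness and the triangle inequality
\begin{equation*}
  \|x - T_{\xi(\omega)}x\| \le \|x-c\| + \|T_{\xi(\omega)}x - c\| \le 2\|x-c\| \qquad (\omega\notin N),
\end{equation*}
giving $R(x)\le 4\|x-c\|^2 <\infty$. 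For (iii), take $x_n\to x$ in $\mathcal{H}$. Continuity of each $T_{\xi(\omega)}$ gives pointwise convergence $\|x_n - T_{\xi(\omega)}x_n\|^2\to\|x-T_{\xi(\omega)}x\|^2$ for $\omega\notin N$; picking any $c\in C$ and a uniform bound $M$ with $\|x_n-c\|\le M$, the estimate above produces the constant dominating function $4M^2$, so Lebesgue's Dominated Convergence Theorem yields $R(x_n)\to R(x)$.

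For (iv), closedness is immediate from (i) and (iii) since $C = R^{-1}(\{0\})$ is the preimage of a closed set under a continuous function. Convexity is the part I expect to require the most care, since in general the fixed-point set of a quasi-nonexpansive map need not be convex on arbitrary metric spaces — the Hilbert-space structure is essential. By \cref{cor:Pxi_nullset} we may write $C = \bigcap_{i\in\xi(\Omega)\setminus\xi(N)}\Fix T_i$, so it suffices to prove each $\Fix T_i$ is convex. For $x,y\in\Fix T_i$, $\lambda\in[0,1]$, and $z=\lambda x+(1-\lambda)y$, I would invoke the well-known convex-combination identity
\begin{equation*}
  \lambda\|u-x\|^2 + (1-\lambda)\|u-y\|^2 = \|u-z\|^2 + \lambda(1-\lambda)\|x-y\|^2
\end{equation*}
applied to $u=T_i z$ and $u=z$. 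Quasi-nonexpansiveness gives $\|T_i z-x\|\le\|z-x\|$ and $\|T_i z-y\|\le\|z-y\|$, so subtracting the two instances of the identity collapses everything to $\|T_i z-z\|^2\le 0$, forcing $T_i z=z$. Hence $\Fix T_i$ is convex and therefore so is $C$.
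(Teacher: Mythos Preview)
Your proof is correct and follows essentially the same path as the paper: the same triangle-inequality bound $\|x-T_\xi x\|\le 2\|x-c\|$ for (ii), the same dominated-convergence argument for (iii), and the same convex-combination identity for convexity in (iv). The one point worth flagging is that your appeals to \cref{lemma:equiv_SCFP_CFP} in (ii) and \cref{cor:Pxi_nullset} in (iv) implicitly import a separability hypothesis on $\mathcal{H}$ that the lemma does not assume. Neither appeal is actually needed: for (ii) the definition of $C$ already gives, for any single fixed $c\in C$, a nullset off which $T_{\xi(\omega)}c=c$; and for (iv) the paper runs exactly your identity argument directly with $T_\xi$ in place of $T_i$ (for $x,y\in C$ one has $T_\xi x=x$ and $T_\xi y=y$ a.s., and the computation showing $\|T_\xi z-z\|^2\le 0$ is identical), so the intersection representation is unnecessary.
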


\begin{proof}
  \begin{enumerate}[label=(\roman*)]
  \item We have $x\in C$ $\Leftrightarrow$ $x\in \Fix T_{\xi}$ a.s.\
    $\Leftrightarrow$ $x=T_{\xi}x$ a.s.\ $\Leftrightarrow$ $R(x)=0$.
  \item Fix $x \in C$, then $x=T_{\xi}x$ a.s. Using
    quasi-nonexpansivity we get a.s., that
    \begin{align}
      \label{eq:dist_ineq}
      \norm{y-T_{\xi}y} \le &\norm{y-x} + \norm{x-T_{\xi} y} \le 2
      \norm{x-y}& \qquad
      \forall y\in \mathcal{H}, \\
      &\Longleftrightarrow& \nonumber \\
      \norm{y-T_{\xi}y}^{2} &\le 4\norm{y-x}^{2}& \qquad \forall
      y\in\mathcal{H}. \label{eq:dist_square_ineq}
    \end{align}
    From \eqref{eq:dist_square_ineq} it follows that $R(y)\le
    4\norm{y-x}^{2} < \infty$ for all $y \in \mathcal{H}$.
  \item Let $x, x_{n} \in \mathcal{H}$, $n \in \mathbb{N}$, with
    $x_{n} \to x$ as $n \to \infty$. Define the functions
    $f_{n}(\omega) = \norm{x_{n}-T_{\xi(\omega)}x_{n}}^{2}$ on
    $\Omega$ ($n\in\mathbb{N}$). Then, by continuity of
    $T_{\xi(\omega)}$ for fixed $\omega \in \Omega$, one has $f_{n}
    \to f := \norm{x-T_{\xi}x}^{2}$ for all $\omega \in
    \Omega$. Define the constant function $g(\omega) = 8 \epsilon^{2}
    + 8\norm{x -c}^{2}$ for some $c \in C$ and some $\epsilon >0$.  By
    \eqref{eq:dist_ineq} we have that $\norm{y-T_{\xi}y} \le 2
    \norm{y-c}$ for all $y\in \mathcal{H}$.  For $y \in
    \mathbb{B}(x,\epsilon)$ this yields $\norm{y-T_{\xi}y} \le 2
    \epsilon +2 \norm{x-c}$.  We conclude that $g$ is
    $\mathbb{P}$-integrable and $f_{n} \le g$ for all $n \in
    \mathbb{N}$ with $x_{n} \in \mathbb{B}(x,\epsilon)$.  Finally,
    application of Lebesgue's Dominated Convergence Theorem yields
    $R(x_{n}) = \mathbb{E}f_{n} \to \mathbb{E}f = R(x)$ as $n \to
    \infty$.
  \item This follows from \cite[Proposition 4.13, Proposition
    4.14]{Bauschke2011}. Note that for any $\alpha \in \mathbb{R}$,
    $a,b \in\mathcal{H}$ we have \cite[Corollary 2.14]{Bauschke2011}
    \begin{align*}
      \norm{\alpha a+(1-\alpha) b}^{2} = \alpha \norm{a}^{2} +
      (1-\alpha)\norm{b}^{2} - \alpha(1-\alpha) \norm{a-b}^{2}.
    \end{align*}
    Let $z = \lambda x+(1-\lambda)y$ with $x,y \in R^{-1}(0) = C$,
    $\lambda \in [0,1]$. One has with $T_{\xi}x=x$ and $T_{\xi}y = y$
    a.s.\ that a.s.\ holds
    \begin{align*}
      \norm{T_{\xi} z -z}^{2} &= \norm{ \lambda (T_{\xi}z -x) +
        (1-\lambda) (T_{\xi}z -y)}^{2} \\ &= \lambda \norm{T_{\xi}z -
        x}^{2} + (1-\lambda) \norm{T_{\xi}z - y}^{2} - \lambda
      (1-\lambda) \norm{x-y}^{2} \\ &\le \lambda \norm{z-x}^{2} +
      (1-\lambda) \norm{z-y}^{2} -\lambda (1-\lambda) \norm{x-y}^{2}
      \\ &= \norm{\lambda (z-x) + (1-\lambda)(z-y)}^{2} \\ &= 0.
    \end{align*}
    So $R(z) = 0$, i.e.\ $z \in R^{-1}(0)$. Closedness of $R^{-1}(0)$
    follows by continuity of $R$.
  \end{enumerate}
\end{proof}

In the next theorem we need to compute conditional
expectations of nonnegative real-valued random variables, which are
non-integrable in general (for example, if the random variable $X_0$
with distribution $\mu$ does not have a finite expectation,
$\mathbb{E}[\norm{X_{0}}]=+\infty$).  But for these random variables
the classical results on integrable random variables are still
applicable (see \cref{thm:cond_expec}), also the disintegration
theorem is still valid (see \cref{thm:disinteg}).

The stage is now set to show convergence for the
corresponding Markov chain.  The next several results concern weak
convergence of sequences of random variables with respect to the
Hilbert space, namely, $x_n \wto x$ if $\act{x_n,y} \to \act{x,y}$ for
all $y \in \mathcal{H}$.
\begin{thm}[weak cluster points belong to feasible set for averaged
  mappings]\label{thm:cvg_SSM}
  Under the standing assumptions, let $T_{i}$ be $\alpha_{i}$-averaged
  with $\alpha_{i} \le \alpha <1$ for all $i \in I$. Then weak cluster
  points (in the sense of Hilbert spaces) of the sequence $(X_{k})_{k
    \in \mathbb{N}_{0}}$ of random variables in $\mathcal{H}$
  generated by \cref{algo:SSM} are a.s.\ contained in $C$.
\end{thm}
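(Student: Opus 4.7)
The plan is to transplant the classical Opial-type argument for averaged mappings on a Hilbert space into the stochastic setting. Fejér monotonicity along the orbit provides boundedness of $(X_k)$ and a summability estimate; the demiclosedness of $\id - T_u$ at $0$ then identifies all weak cluster points.

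I would first apply the averaged inequality \eqref{d:averaged} to the pair $(X_k, c)$ for an arbitrary $c \in C$ and to the operator $T_{\xi_k}$. The null set $N$ from \eqref{eq:N}, which by \cref{lemma:equiv_SCFP_CFP} guarantees $T_{\xi_k(\omega)} c = c$ for all $k$ and $\omega \notin N$, together with the uniform bound $\alpha_i \le \alpha < 1$, yields almost surely
\begin{align*}
  \norm{X_{k+1} - c}^2 + \tfrac{1-\alpha}{\alpha} \norm{X_k - T_{\xi_k} X_k}^2 \le \norm{X_k - c}^2.
\end{align*}
Hence $(\norm{X_k - c})_k$ is almost surely nonincreasing, so it converges, the orbit $(X_k(\omega))$ is almost surely bounded, and by reflexivity of $\mathcal{H}$ it admits weak cluster points a.s. Telescoping the above inequality also gives
\begin{align*}
  \sum_{k=0}^{\infty} \norm{X_k - T_{\xi_k} X_k}^2 \le \tfrac{\alpha}{1-\alpha} \norm{X_0 - c}^2 < \infty \text{ a.s.}
\end{align*}

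Next I would pass from this pathwise summability to pointwise decay of the averaged residual $R$. Conditioning on $\mathcal{F}_k := \sigma(X_0, \xi_0, \ldots, \xi_{k-1})$ and invoking the independence of $\xi_k$ from $\mathcal{F}_k$ built into Standing Assumption 1(a) gives $\mathbb{E}[\norm{X_k - T_{\xi_k} X_k}^2 \mid \mathcal{F}_k] = R(X_k)$, so the Fejér inequality rewrites as the supermartingale relation
\begin{align*}
  \mathbb{E}[\norm{X_{k+1} - c}^2 \mid \mathcal{F}_k] + \tfrac{1-\alpha}{\alpha} R(X_k) \le \norm{X_k - c}^2.
\end{align*}
A Robbins--Siegmund / nonnegative supermartingale argument (localised on the $\mathcal{F}_0$-measurable event $\{\norm{X_0 - c} \le M\}$ to bypass a possible lack of a second moment for $\mu$, then letting $M \to \infty$) yields $\sum_k R(X_k) < \infty$ a.s., and in particular $R(X_k) \to 0$ a.s.

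To conclude, I would fix $\omega$ outside the exceptional null set and let $X_{k_j}(\omega) \wto x_\omega$ along some subsequence. From
\begin{align*}
  R(X_{k_j}(\omega)) = \int_I \norm{X_{k_j}(\omega) - T_u X_{k_j}(\omega)}^2 \, \mathbb{P}^{\xi}(\dd{u}) \to 0
\end{align*}
one may extract (by a standard diagonal / subsequence argument) a further subsequence, still weakly convergent to $x_\omega$, along which $\norm{X_{k_j}(\omega) - T_u X_{k_j}(\omega)} \to 0$ for $\mathbb{P}^{\xi}$-almost every $u \in I$. Since each $T_u$ is averaged, hence nonexpansive, $\id - T_u$ is demiclosed at $0$ (Browder's principle, e.g.\ \cite[Corollary 4.28]{Bauschke2011}), so $x_\omega \in \Fix T_u$ for $\mathbb{P}^{\xi}$-almost every $u$, which by \cref{cor:Pxi_nullset} is exactly $x_\omega \in C$. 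The main obstacle is the middle paragraph: converting pathwise summability of $\norm{X_k - T_{\xi_k} X_k}^2$ into almost sure decay of the \emph{averaged} quantity $R(X_k)$ under only probability-measure assumptions on $\mu$. The localisation device above is what makes it possible to apply demiclosedness at $\mathbb{P}^{\xi}$-almost every index $u$, rather than merely at the sampled random indices $\xi_{k_j}(\omega)$ along the cluster subsequence.
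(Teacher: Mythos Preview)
Your argument is essentially correct, and the supermartingale step leading to $R(X_k)\to 0$ a.s.\ is the same as the paper's. Two comments.

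First, a small slip in generality. You invoke the uniform null set $N$ from \eqref{eq:N} via \cref{lemma:equiv_SCFP_CFP}, but that lemma requires separability of $G$, and \cref{thm:cvg_SSM} is deliberately stated \emph{without} separability (separability is only added in the next result, \cref{thm:weakCvg_sep}). The paper's proof therefore fixes a \emph{single} $c\in C$ and works on a nullset $N_c$ that may depend on $c$; this suffices for boundedness and for the supermartingale estimate. Likewise, the appeal to \cref{cor:Pxi_nullset} at the end is unnecessary: once you have $x_\omega\in\Fix T_u$ for $\mathbb{P}^\xi$-a.e.\ $u$, the set $A_{x_\omega}=\{u\in I: T_u x_\omega=x_\omega\}$ is in $\mathcal{I}$ by the slice argument used in the proof of \cref{cor:Pxi_nullset} (which does not use separability), so $\mathbb{P}(x_\omega\in\Fix T_\xi)=\mathbb{P}^\xi(A_{x_\omega})=1$ and $x_\omega\in C$ directly from the definition.

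Second, your identification of the weak cluster points is correct but proceeds by a genuinely different route than the paper. You pass from $\int_I \|X_{k_j}(\omega)-T_u X_{k_j}(\omega)\|^2\,\mathbb{P}^\xi(\dd u)\to 0$ to a further subsequence along which the integrand tends to $0$ for $\mathbb{P}^\xi$-a.e.\ $u$ (this is just $L^1$-convergence implying a.e.\ convergence along a subsequence; no diagonal argument is needed), and then apply Browder's demiclosedness principle to each single nonexpansive $T_u$. The paper instead establishes, by an elementary computation with nonexpansiveness, the inequality
\[
R(x)\;\le\; R(x_n)+2\sqrt{R(x_n)}\,\|x_n-x\|+2\,\mathbb{E}\bigl[\langle x-T_\xi x,\,x-x_n\rangle\bigr],
\]
and shows the last term vanishes as $x_n\wto x$ by dominated convergence. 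In effect the paper proves a demiclosedness-type statement directly for the \emph{aggregated} residual $R$, obtaining $R(x_\omega)=0$ in one stroke, whereas you disaggregate over $u$ and lean on the classical single-operator result. Both are valid; the paper's approach avoids the a.e.-subsequence extraction, while yours makes the connection to standard fixed-point theory more transparent.
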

\begin{proof}
  Fix $c \in C$. Since $T_{\xi}$ is averaged we have for all $k \in
  \mathbb{N}$ that
  \begin{align}
    \label{eq:basic_av_rel}
    \norm{X_{k+1} - c}^{2} \le \norm{X_{k} - c}^{2} -
    \frac{1-\alpha}{\alpha}\norm{X_{k+1} - X_{k}}^{2}
  \end{align}
  everywhere but on a $\mathbb{P}$-nullset $N_{c}$, which may depend
  on $c$. Let $\mathcal{F}_{k} = \sigma(X_{0},\xi_{0}, \ldots,
  \xi_{k-1})$ be the $\sigma$-algebra of all iterations of the
  algorithm up to the $k$-th and apply \cref{lemma:supermartingale}.
  We get that $\sum_{k\in\mathbb{N}_{0}} R(X_{k}) < \infty$ a.s.,
  where from \cref{thm:disinteg} follows that $\cex{\norm{X_{k+1} -
      X_{k}}^{2}}{\mathcal{F}_{k}} = R(X_{k})$. Hence there is $\tilde
  N \subset\Omega$ with $\mathbb{P}(\tilde N) = 0$ and
  $R(X_{k}(\omega)) \to 0$ as $k \to \infty$ for $\omega \in \Omega
  \setminus (N_{c}
  \cup \tilde N)$.\\
  By nonexpansiveness of $T_{\xi}$ for all we find for any $x,x_{n}
  \in \mathcal{H}$
  \begin{align*}
    \norm{x-T_{\xi}x}^{2} &= \norm{x_{n} - T_{\xi}x}^{2} +
    \norm{x-x_{n}}^{2} + 2 \act{x_{n} - T_{\xi}x,x-x_{n}} \\ &=
    \norm{x_{n} - T_{\xi}x}^{2} - \norm{x-x_{n}}^{2} + 2 \act{x -
      T_{\xi}x,x-x_{n}} \\ &= \norm{x_{n} - T_{\xi}x_{n}}^{2} +
    \norm{T_{\xi}x - T_{\xi} x_{n}}^{2} + 2\act{x_{n} - T_{\xi}
      x_{n},T_{\xi}x_{n}-T_{\xi}x} \\ &\qquad - \norm{x-x_{n}}^{2} + 2
    \act{x - T_{\xi}x,x-x_{n}} \\ &\le \norm{x_{n} - T_{\xi}x_{n}}^{2}
    + 2\act{x_{n} - T_{\xi} x_{n},T_{\xi}x_{n}-T_{\xi}x} + 2 \act{x -
      T_{\xi}x,x-x_{n}} \\ &\le \norm{x_{n} - T_{\xi}x_{n}}^{2} +
    2\norm{x_{n} - T_{\xi} x_{n}}\norm{x_{n}-x} + 2 \act{x -
      T_{\xi}x,x-x_{n}}.
  \end{align*}
  Taking expectation and using Jensen's inequality yields
  \begin{align}
    \label{eq:weak_continuity_R}
    R(x) \le R(x_{n}) + 2 \sqrt{R(x_{n})}\norm{x_{n}-x} +
    2\mathbb{E}[\act{x- T_{\xi}x,x-x_{n}}].
  \end{align}
  Now assume that the sequence $(x_{n})$ is weakly converging to $x
  \in \mathcal{H}$, i.e.\ $x_{n} \wto x$. Then the functions $f_{n} =
  \act{x-T_{\xi}x,x-x_{n}}$, $n \in \mathbb{N}$, on $\Omega$ satisfy
  $f_{n} \to 0$ a.s. Defining the $\mathbb{P}$-integrable function
  $g(\omega) \equiv \norm{x-T_{\xi(\omega)}x}\sup_{n} \norm{x-x_{n}}$
  gives us $\abs{f_{n}} \le g$ for all $n \in \mathbb{N}$ and hence by
  Lebesgue's Dominated Convergence Theorem
  $\mathbb{E}[\act{x-T_{\xi}x,x-x_{n}}] \to 0$ as
  $n \to \infty$.\\
  So for $\omega \in \Omega \setminus (N_{c} \cup \tilde N)$ there is
  a weakly convergent subsequence of the bounded sequence
  $(X_{k}(\omega))_{k \in \mathbb{N}}$, denoted $x_{n}
  :=X_{k_{n}}(\omega) \wto x_{\omega} =:x$ as $n \to \infty$.  As
  shown above this subsequence satisfies $R(x_{n}) \to 0$ as $n \to
  \infty$. We conclude with \eqref{eq:weak_continuity_R} that $R(x) =
  0$, i.e.\ $x \in C$ and hence any weak cluster point of the sequence
  $(X_{k}(\omega))_{k}$ is contained in $C$.
\end{proof}
In the case of separable Hilbert spaces, we are able to show Fej\'er
monotonicity of the sequence $(X_{k})$ a.s., so the classical theory
of convergence analysis from \cite{Bauschke2011} can be applied in
this case. An analogous statement for nonseparable Hilbert spaces
remains open since we do not have the representation
\cref{lemma:equiv_SCFP_CFP} at hand.

\begin{thm}[almost sure weak convergence under
  separability] \label{thm:weakCvg_sep} Under the same assumptions as
  in \cref{thm:cvg_SSM} assume additionally that $\mathcal{H}$ is a
  separable Hilbert space.  Then the sequence $(X_{k})$ is a.s.\
  weakly convergent (in the sense of Hilbert spaces) to a random
  variable $X_{\mu} \in C$, depending on the initial distribution
  $\mu$. Furthermore $P_{C}X_{k} \to X_{\mu}$ strongly a.s. as $k \to
  \infty$.
\end{thm}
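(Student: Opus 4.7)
The plan is to combine classical Fej\'er-monotonicity convergence theory with \cref{thm:cvg_SSM}. Separability is what allows all the per-point ``almost sure'' nullsets (one per $c \in C$) to be consolidated into a single nullset, precisely what \cref{lemma:equiv_SCFP_CFP} together with the construction \eqref{eq:N} delivers.

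First I would invoke \cref{lemma:equiv_SCFP_CFP} (applicable because $\mathcal{H}$ is separable and complete) and the $\mathbb{P}$-nullset $N = \bigcup_k N_k$ from \eqref{eq:N}. By construction, for every $\omega \in \Omega \setminus N$, every $k\in\mathbb{N}_0$ and every $c \in C$ one has $T_{\xi_k(\omega)} c = c$. Since averaged mappings are in particular quasi-nonexpansive \cite[Remark 4.24]{Bauschke2011}, this implies pathwise Fej\'er monotonicity with respect to $C$:
\[
\|X_{k+1}(\omega) - c\| \le \|X_k(\omega) - c\| \qquad \forall c \in C,\ \forall k,\ \forall \omega \in \Omega\setminus N.
\]
In particular, $(X_k(\omega))_k$ is bounded for every such $\omega$.

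Next, by \cref{thm:cvg_SSM} there is a $\mathbb{P}$-nullset $\tilde N$ such that for every $\omega \in \Omega\setminus \tilde N$ all weak cluster points of $(X_k(\omega))_k$ lie in $C$. On the nullset complement $\Omega\setminus(N\cup \tilde N)$, the sequence $(X_k(\omega))_k$ is thus Fej\'er monotone with respect to the closed convex set $C$ (closedness and convexity by \cref{lemma:prop_R}\ref{lemma:prop_C}), bounded, and has all weak cluster points in $C$. The classical Opial-type criterion \cite[Theorem 5.5]{Bauschke2011} then yields pathwise weak convergence to a unique point $x_\omega \in C$. Setting $X_\mu(\omega) := x_\omega$ on $\Omega \setminus (N\cup \tilde N)$ (and, say, $X_\mu(\omega) := 0$ on the nullset) produces the desired limit random variable; its measurability follows by checking $\act{X_\mu, y_n} = \lim_k \act{X_k, y_n}$ on a countable dense subset $(y_n) \subset \mathcal{H}$, which exists by separability.

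For the additional claim that $P_C X_k \to X_\mu$ strongly almost surely, I would apply \cite[Proposition 5.7]{Bauschke2011}: a sequence that is Fej\'er monotone with respect to a nonempty closed convex set and converges weakly to a point of that set has its metric projections converging strongly to the same point. Applying this pathwise on $\Omega\setminus(N\cup \tilde N)$ closes the proof. The main obstacle, flagged in the remark preceding the theorem, is precisely the consolidation of uncountably many $c$-dependent nullsets into one; without separability, one loses the representation \cref{lemma:equiv_SCFP_CFP} and pathwise Fej\'er monotonicity uniformly over all of $C$ becomes inaccessible from our setup.
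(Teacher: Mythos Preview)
Your proposal is correct and follows essentially the same route as the paper: use separability via \cref{lemma:equiv_SCFP_CFP} and \eqref{eq:N} to consolidate the $c$-dependent nullsets into one, obtain a.s.\ Fej\'er monotonicity with respect to $C$, combine this with \cref{thm:cvg_SSM} and \cite[Theorem 5.5]{Bauschke2011} for weak convergence, and then invoke \cite[Proposition 5.7 / Corollary 5.8]{Bauschke2011} for strong convergence of the projections. Your treatment is slightly more detailed than the paper's (you explicitly address measurability of $X_\mu$, which the paper leaves implicit).
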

\begin{proof}
  Instead of a nullset $N_{c}$, which may depend on $c \in C$, as in
  the proof of \cref{thm:cvg_SSM}, separability gives with help of
  \cref{lemma:equiv_SCFP_CFP} that there is a nullset $N$, such that
  on $\Omega\setminus N$ \cref{eq:basic_av_rel} is satisfied for all
  $c \in C$. This implies a.s.\ Fejér monotonicity of $(X_{k})$. Since
  from \cref{thm:cvg_SSM} follows that weak clusterpoints of $(X_{k})$
  are contained in $C$ a.s., we can now apply Theory in \cite{Bauschke2011} developed for Fej\'er
  monotone sequences, we get: From \cite[Theorem 5.5]{Bauschke2011} (a
  Fejér monotone sequence w.r.t.\ $C$ that has all weak clusterpoints
  in $C$ is weakly convergent to a point in $C$) follows that $X_{k}
  \wto X_{\mu} \in C$ a.s.

  For strong convergence of $(P_{C}X_{k})$ a.s.\ we apply
  \cite[Proposition 5.7]{Bauschke2011}. From \cite[Corollary
  5.8]{Bauschke2011} we get from $X_{k} \wto X_{\mu}$ a.s., that
  $P_{C}X_{k} \to X_{\mu}$ a.s.\ strongly as $k \to \infty$.
\end{proof}

\begin{example}[convergence to projection for affine
  subspaces] \label{example:affine_subspace} Let $\mathcal{H}$ be
  separable and $C_{i}$ be an affine subspace, $i \in I$, where $I$ is
  an arbitrary index set. Let $T_{i} = P_{i}$ be the projector onto
  $C_{i}$. Under the standing assumptions holds that $\lim_{k} X_{k} =
  X_{\mu}=P_{C}X_{0}$ for $X_{0} \sim \mu$ and any $\mu \in
  \mathscr{P}(\mathcal{H})$.

  We show, that $P_{C} X_{k+1} = P_{C} X_{k}$ for any $k \in
  \mathbb{N}_{0}$. This allows us to conclude that $P_{C}X_{k} = P_{C}X_{0}$
  for any $k \in \mathbb{N}_{0}$, and thus $P_{C} X_{0}$ is the only
  possible weak cluster point of $(X_{k})$ by \cref{thm:weakCvg_sep}.
  Using the characterization \cite[Theorem 4.1]{Deutsch2001} (if $K
  \subset \mathcal{H}$ is nonempty, closed and convex and $u \in K$
  then $\act{ x - u,k - u } \le 0$ for all $k \in K$ iff $u= P_{K}x$)
  of a projection, we find with help of \cite[Theorem
  4.9]{Deutsch2001} (for a subspace $S$ holds that
  $\act{x-P_{S}x,s}=0$ for all $s \in S$), that for $c \in C$ holds
  that
  \begin{align*}
    \act{X_{k+1} - P_{C}X_{k},c-P_{C}X_{k}} =
    \underbrace{\act{P_{\xi_{k}}X_{k} -X_{k},c-P_{C}X_{k}}}_{= 0} +
    \underbrace{\act{X_{k}-P_{C}X_{k},c-P_{C}X_{k}}}_{\le 0} \le 0.
  \end{align*}
  Hence by \cite[Theorem 4.1]{Deutsch2001} we have that $P_{C}X_{k+1}
  = P_{C} X_{k}$.
\end{example}

\subsection{Linear rates of convergence}
\label{sec:linear_rates}

We will assume in this section that $\mathcal{H}$ is a separable
Hilbert space and $T_{i}$ is $\alpha_{i}$-averaged, $i \in I$. We will
furthermore assume, that $\alpha_{i} \le \alpha$ for some
$\alpha<1$.  As with the deterministic case, geometric convergence of the algorithm
can be analyzed by introducing a condition on the set of fixed points.  In the 
context of set feasibility with finitely many sets, the condition 
is equivalent to {\em linear regularity} of the sets \cite[Assumption
2]{Nedic2011}: There exists $\kappa > 0$ such that
\begin{align}
  \label{eq:LinRegStoch}
  \dist^{2}(x,C) \le \kappa R(x) \qquad \forall x \in \mathcal{H}.
\end{align}
In the more general context of fixed point mappings, this property is
more appropriately called {\em global  metric subregularity} of $R$ at all
points in $C$ for $0$ \cite{Kruger2016};  in particular there exists a 
$\kappa>0$ such that 
\[
  \dist^{2}(x,R^{-1}(0)) \le \kappa R(x) \qquad \forall x \in \mathcal{H}.   
\]
Here $C=R^{-1}(0)$, so the above is just another way of writing \eqref{eq:LinRegStoch}.
The smallest constant
satisfying this inequality will be called the regularity constant, it
is given by
\begin{align*}
  \sup_{x \in \mathcal{H}\setminus C} \frac{\dist^{2}(x,C)}{R(x)}.
\end{align*}
\begin{thm}\label{thm:Equiv_LinRegStoch_LinCvg} In addition to the standing assumptions, suppose 
the regularity condition in \cref{eq:LinRegStoch} is
  satisfied and $T_{i}$ is $\alpha_{i}$-averaged, $i \in I$ with
  $\alpha_{i} \le \alpha$ for some $\alpha<1$.  Then the RFI
  converges geometrically in expectation to the fixed point set, i.e.\ for any
  initial distribution
  \begin{align}
    \label{eq:LinCvgInExpectation}
    \mathbb{E}[\dist(X_{k+1},C)] \le \sqrt{1 - \kappa^{-1}
      \frac{1-\alpha}{\alpha} } \mathbb{E} [\dist(X_{k},C)] \qquad
    \forall k \in \mathbb{N}_{0}.
  \end{align}
\end{thm}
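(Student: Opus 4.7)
The plan is to exploit the $\alpha$-averaging inequality in the form used already in \cref{thm:cvg_SSM}, applied with $y$ chosen as the metric projection $P_C X_k$, and then to pass to conditional expectations on $\mathcal{F}_k = \sigma(X_0,\xi_0,\ldots,\xi_{k-1})$ to introduce the merit function $R$, at which point \eqref{eq:LinRegStoch} gives the required contraction.

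First I would note that because $\mathcal{H}$ is separable and $T_i$ is quasi-nonexpansive, \cref{lemma:equiv_SCFP_CFP} yields a single $\mathbb{P}$-nullset $N$ (namely the one in \eqref{eq:N}) outside of which $T_{\xi_k(\omega)}c=c$ simultaneously for all $c\in C$. Thus the averaging inequality
\[
\|X_{k+1}-c\|^{2}+\frac{1-\alpha}{\alpha}\|X_{k+1}-X_{k}\|^{2}\le\|X_{k}-c\|^{2}
\]
holds almost surely for \emph{every} $c\in C$. Specializing to the (random) point $c=P_C X_k\in C$ and bounding $\dist^{2}(X_{k+1},C)\le\|X_{k+1}-P_C X_k\|^{2}$, I obtain
\[
\dist^{2}(X_{k+1},C)\le\dist^{2}(X_{k},C)-\frac{1-\alpha}{\alpha}\|X_{k+1}-X_{k}\|^{2}\quad\text{a.s.}
\]

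Next I would take the conditional expectation given $\mathcal{F}_k$. Since $X_k$ is $\mathcal{F}_k$-measurable, $P_C X_k$ is also $\mathcal{F}_k$-measurable, and $\xi_k$ is independent of $\mathcal{F}_k$, disintegration (\cref{thm:disinteg}) yields $\mathbb{E}[\|X_{k+1}-X_k\|^{2}\mid\mathcal{F}_k]=R(X_k)$, exactly as in the proof of \cref{thm:cvg_SSM}. Therefore
\[
\mathbb{E}\!\left[\dist^{2}(X_{k+1},C)\mid\mathcal{F}_k\right]\le\dist^{2}(X_{k},C)-\frac{1-\alpha}{\alpha}R(X_k).
\]
Applying the regularity assumption \eqref{eq:LinRegStoch} to bound $R(X_k)\ge\kappa^{-1}\dist^{2}(X_k,C)$ collapses the right-hand side to
\[
\mathbb{E}\!\left[\dist^{2}(X_{k+1},C)\mid\mathcal{F}_k\right]\le\left(1-\kappa^{-1}\,\tfrac{1-\alpha}{\alpha}\right)\dist^{2}(X_{k},C).
\]

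To conclude, I apply Jensen's inequality to the conditional expectation of the square root: $\mathbb{E}[\dist(X_{k+1},C)\mid\mathcal{F}_k]^{2}\le\mathbb{E}[\dist^{2}(X_{k+1},C)\mid\mathcal{F}_k]$, which gives
\[
\mathbb{E}\!\left[\dist(X_{k+1},C)\mid\mathcal{F}_k\right]\le\sqrt{1-\kappa^{-1}\tfrac{1-\alpha}{\alpha}}\,\dist(X_{k},C),
\]
and then take unconditional expectations by the tower property to obtain \eqref{eq:LinCvgInExpectation}. The only real subtlety is the order of operations: one has to apply Jensen \emph{before} the outer expectation, because a direct unconditional Jensen step would yield $(\mathbb{E}\dist)^{2}\le\mathbb{E}\dist^{2}$ in the \emph{wrong} direction relative to the quantity one wants to bound. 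Handling the almost-sureness of the averaging inequality uniformly in $c$ (via the single nullset from separability) is a minor but essential bookkeeping point; everything else is routine.
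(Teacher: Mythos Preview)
Your proof is correct and follows essentially the same route as the paper: apply the averaging inequality at $c=P_C X_k$, condition on $\mathcal{F}_k$ to bring in $R(X_k)$ via disintegration, invoke \eqref{eq:LinRegStoch}, and use Jensen to pass from squared distances to distances. The only cosmetic difference is that the paper applies Jensen to the concave map $t\mapsto\sqrt{t}$ (pushing the conditional expectation inside the square root of $\dist^{2}(X_k,C)-\tfrac{1-\alpha}{\alpha}\|X_{k+1}-X_k\|^{2}$), whereas you first take the conditional expectation of the squared inequality and then apply Jensen to the convex map $t\mapsto t^{2}$; these are equivalent manipulations.
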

\begin{proof}
  Revisiting \eqref{eq:basic_av_rel} in the proof of
  \cref{thm:cvg_SSM} gives us for $\omega \in \Omega\setminus N$ ($N$
  given by \eqref{eq:N}) and $x=P_{C}X_{k}(\omega)$
  \begin{align*}
    \dist^{2}(X_{k+1}(\omega),C) \le \norm{X_{k+1}(\omega) - x}^{2}
    \le \dist^{2}(X_{k}(\omega),C) - \frac{1-\alpha}{\alpha}
    \norm{X_{k+1}(\omega) - X_{k}(\omega)}^{2}.
  \end{align*}
  With help of Jensen's inequality and concavity of $x\mapsto
  \sqrt{x}$ on $[0,\infty)$, we get that
  \begin{align*}
    \cex{\dist(X_{k+1},C)}{\mathcal{F}_{k}} &\le \cex{\sqrt{
        \dist^{2}(X_{k},C) - \frac{1-\alpha}{\alpha}
        \norm{T_{\xi_{k}}X_{k} - X_{k}}^{2}}}{\mathcal{F}_{k}} \\
    &\le \sqrt{ \dist^{2}(X_{k},C) - \frac{1-\alpha}{\alpha}
      \cex{\norm{T_{\xi_{k}}X_{k} - X_{k}}^{2}}{\mathcal{F}_{k}}} \\
    &= \sqrt{ \dist^{2}(X_{k},C) - \frac{1-\alpha}{\alpha} R(X_{k})} \\
    &\le \sqrt{1- \kappa^{-1} \frac{1-\alpha}{\alpha}} \dist(X_{k},C).
  \end{align*}
\end{proof}
\noindent Note that it could be $\mathbb{E}[\dist(X_{k},C)]=\infty$
for all $k \in \mathbb{N}$, depending on the initial distribution
$\mu$.

The next theorem concerns the {\em Wasserstein distance} of two
probability measures.  For two measures $\nu_{1},\nu_{2} \in
\mathscr{P}(G)$ this is given by
\begin{align*}
  W(\nu_{1},\nu_{2}) = \inf_{\substack{ Y_{1} \sim \nu_{1} \\ Y_{2}
      \sim \nu_{2}}} \mathbb{E}[\norm{Y_{1} - Y_{2}}].
\end{align*}

\begin{thm}[strong convergence and geometric convergence of
  measures] \label{thm:LinCVGMeasure} Under the standing assumptions,
  suppose the regularity condition in \cref{eq:LinRegStoch} is
  satisfied and $T_{i}$ is $\alpha_{i}$-averaged, $i \in I$ with
  $\alpha_{i} \le \alpha$ for some $\alpha<1$. Then $X_{k} \to X$
  strongly a.s.\ as $k \to \infty$ and the Wasserstein distances
  $W(\mathcal{L}(X_{k}),\mathcal{L}(X))$ also converge geometricly,
  there is $r \in (0,1)$ such that
  \begin{align*}
    W(\mathcal{L}(X_{k}),\mathcal{L}(X)) \le 2 r^{k}
    W(\mathcal{L}(X_{0}),\mathcal{L}(X)).
  \end{align*}
\end{thm}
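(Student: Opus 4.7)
The plan is to combine the almost sure weak convergence from \cref{thm:weakCvg_sep} with the geometric decay of $\mathbb{E}[\dist(X_k,C)]$ from \cref{thm:Equiv_LinRegStoch_LinCvg}, then assemble a Wasserstein bound using the natural coupling produced by the algorithm itself.

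\textbf{Strong convergence.} \cref{thm:weakCvg_sep} already gives $X_k\wto X$ a.s.\ with $X\in C$ a.s., and $P_CX_k\to X$ strongly a.s. By the triangle inequality $\|X_k-X\|\le \dist(X_k,C)+\|P_CX_k-X\|$, it suffices to show $\dist(X_k,C)\to 0$ a.s. The proof of \cref{thm:weakCvg_sep} already established a single nullset on which $(X_k)$ is Fej\'er monotone w.r.t.\ $C$, so $(\dist(X_k,C))$ is pathwise nonincreasing a.s. Conditioning on $X_0=x_0$, \cref{thm:Equiv_LinRegStoch_LinCvg} yields $\mathbb{E}[\dist(X_k,C)\mid X_0=x_0]\le r^k\dist(x_0,C)\to 0$; combined with the a.s.\ monotonicity this forces $\dist(X_k,C)\to 0$ almost surely conditionally on $X_0$, hence unconditionally, and therefore $X_k\to X$ strongly a.s.

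\textbf{Wasserstein bound.} Since the algorithm itself produces the coupling $(X_0,X)$ of $\mathcal{L}(X_0)$ and $\mathcal{L}(X)$,
\begin{align*}
W(\mathcal{L}(X_k),\mathcal{L}(X))\le \mathbb{E}[\|X_k-X\|]\le \mathbb{E}[\dist(X_k,C)]+\mathbb{E}[\|P_CX_k-X\|].
\end{align*}
The first summand is at most $r^k\mathbb{E}[\dist(X_0,C)]$ by \cref{thm:Equiv_LinRegStoch_LinCvg}. For the second, I will combine the projection Pythagoras inequality for the convex set $C$ with Fej\'er monotonicity to obtain pathwise
\begin{align*}
\|P_CX_{j+1}-P_CX_j\|^2\le \dist^2(X_j,C)-\dist^2(X_{j+1},C),
\end{align*}
and in particular $\|P_CX_{j+1}-P_CX_j\|\le \dist(X_j,C)$; telescoping and using $P_CX_j\to X$ a.s.\ then yields $\mathbb{E}[\|P_CX_k-X\|]\le \tfrac{r^k}{1-r}\mathbb{E}[\dist(X_0,C)]$. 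Finally, since $\mathcal{L}(X)$ is concentrated on $C$, for any coupling $(X_0,Z_0)$ with $Z_0\sim\mathcal{L}(X)$ one has $\dist(X_0,C)\le \|X_0-Z_0\|$ a.s., whence $\mathbb{E}[\dist(X_0,C)]\le W(\mathcal{L}(X_0),\mathcal{L}(X))$ by infimum over couplings. Assembling,
\begin{align*}
W(\mathcal{L}(X_k),\mathcal{L}(X))\le \frac{2-r}{1-r}\, r^k\, W(\mathcal{L}(X_0),\mathcal{L}(X)),
\end{align*}
which, by replacing $r$ with a slightly larger rate $\tilde r\in(r,1)$ so as to absorb the prefactor $(2-r)/(1-r)$ into the constant $2$, gives the claimed form.

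The main obstacle is controlling $\|P_CX_k-X\|$ at a geometric rate: Fej\'er monotonicity alone only delivers the non-decaying estimate $\|P_CX_k-X\|\le \|X_0-X\|$. The projection Pythagoras identity is what ties each consecutive increment $P_CX_{j+1}-P_CX_j$ to the geometrically decaying $\dist(X_j,C)$; this turns the telescoping sum into a convergent geometric series with the \emph{same} rate $r$ as the residual $\mathbb{E}[\dist(X_k,C)]$, so both contributions to the triangle inequality collapse together and can be combined into a single Wasserstein estimate with a common rate.
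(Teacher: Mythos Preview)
Your argument is correct, but it takes a noticeably longer route than the paper's. The paper bypasses the shadow sequence $P_CX_k$ entirely by observing that, since $P_CX_k\in C$ is a common fixed point of all $T_{\xi_j}$ a.s., nonexpansiveness gives directly
\[
\|X_k-X_{k+m}\|\le \|X_k-P_CX_k\|+\|P_CX_k-X_{k+m}\|
= \dist(X_k,C)+\|T_{\xi_{k+m-1}}\cdots T_{\xi_k}P_CX_k - X_{k+m}\|\le 2\dist(X_k,C).
\]
This single inequality simultaneously establishes that $(X_k)$ is Cauchy a.s.\ (via $\dist(X_k,C)\le\sqrt{\kappa R(X_k)}\to 0$) and, after letting $m\to\infty$, yields the clean bound $\|X_k-X\|\le 2\dist(X_k,C)$. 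Taking expectations and invoking \cref{thm:Equiv_LinRegStoch_LinCvg} then gives exactly $2r^k$ with the \emph{same} rate $r=\sqrt{1-\kappa^{-1}(1-\alpha)/\alpha}$, without any need to adjust the rate.

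Your decomposition $\|X_k-X\|\le\dist(X_k,C)+\|P_CX_k-X\|$ forces you to control the shadow tail $\|P_CX_k-X\|$ separately; the projection Pythagoras/Fej\'er telescoping you use is valid and is essentially the argument behind \cite[Proposition~5.7]{Bauschke2011}, but it costs you the extra geometric-series factor $(1-r)^{-1}$ and hence the rate adjustment at the end. So the paper's approach buys the sharp constant $2$ and the explicit rate, while your approach buys nothing extra here---it is simply a detour around the one-line Cauchy estimate above.
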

\begin{proof}
  See also \cite[Theorem 5.12]{Bauschke2011}. One has a.s.\ that
  \begin{align*}
    \norm{X_{k} - X_{k+m}} \le \norm{X_{k} - P_{C}X_{k}} +
    \norm{P_{C}X_{k} - X_{k+m}} \le 2 \dist(X_{k},C) \le 2
    \sqrt{\kappa R(X_{k})}.
  \end{align*}
  We used here, that $T_{\xi}$ is nonexpansive and it satisfies
  $T_{\xi} c = c$ for any $c \in C$ a.s., hence $\norm{P_{C}X_{k} -
    X_{k+m}} = \norm{T_{\xi_{k+m-1}}\cdots T_{\xi_{k}} P_{C}X_{k} -
    X_{k+m}} \le \dist(X_{k},C)$.  This gives us that $(X_{k})$ is a
  Cauchy sequence a.s., since $R(X_{k}) \to 0$ as seen in the proof of
  \cref{thm:cvg_SSM}. It's limit $X$ is contained in $C$, since it's
  weak limit needs to coincide with the strong limit. Letting $m \to
  \infty$ one arrives at $ \norm{X_{k} - X} \le 2
  \dist(X_{k},C)$. Taking the expectation yields
  $\mathbb{E}[\norm{X_{k} - X}] \le 2\mathbb{E}[\dist(X_{k},C)]$.
  Hence, using \cref{thm:Equiv_LinRegStoch_LinCvg} gives us
  $\mathbb{E}[\norm{X_{k} - X}] \le 2 r^{k}
  \mathbb{E}[\dist(X_{0},C)]$ with $r = \sqrt{1 - \kappa^{-1}
    \tfrac{1-\alpha}{\alpha} }$ and using the fact that
  $\mathbb{E}[\dist(X_{0},C)] \le
  W(\mathcal{L}(X_{0}),\mathcal{L}(X))$, we have, by the definition of
  the Wasserstein distance,
  \begin{align*}
    W(\mathcal{L}(X_{k}),\mathcal{L}(X)) \le 2 r^{k}
    W(\mathcal{L}(X_{0}),\mathcal{L}(X)).
  \end{align*}
\end{proof}
\noindent Note that it could be $W(\mathcal{L}(X_{0}),\mathcal{L}(X))
= \infty$, depending on the initial distribution $\mu$.

\begin{rem}[$\epsilon$-fixed point]
  In order to assure that, with probability greater than $1-\beta$,
  the $k$-th iterate is in an $\epsilon$ neighborhood of the feasible
  set $C$, it is sufficient that $k \ge \ln(\tfrac{\beta
    \epsilon}{\sqrt{\kappa R(x)}}) / \ln(c)$, where
  $c=\sqrt{1-\frac{1-\alpha}{\alpha}\kappa^{-1}}$ and $X_{0} \sim
  \delta_{x}$.  To see this, note that, by Markov's inequality,
  \begin{align*}
    \mathbb{P}(X_{k} \in C+\epsilon \mathbb{B}(0,1)) &=
    \mathbb{P}(\dist(X_{k},C) < \epsilon) \\ &=
    1-\mathbb{P}(\dist(X_{k},C) \ge \epsilon) \\ &\ge 1-
    \frac{\mathbb{E}[\dist(X_{k} ,C)]}{\epsilon} \\ &\ge
    1-r^{k}\frac{\dist(x,C)}{\epsilon} \\ &\ge 1- r^{k}
    \frac{\sqrt{\kappa R(x)}}{\epsilon}.
  \end{align*}
\end{rem}
\begin{rem}
  As seen in \cref{example:affine_subspace} the probability
  $\mathbb{P}(X_{k} \in C)$ can increase to $1$ as $k \to \infty$, but
  this is not necessarily the case, as we will see in Examples
  \ref{example:intervals} and \ref{example:disks}.  There, one finds
  that $\mathbb{P}(X_{k} \in C) = \mathbb{P}(X_{0} \in C)$ for
  $k\in\mathbb{N}$. In \cref{example:lines} it holds that
  $\mathbb{P}(X_{k} \in C) = \mathbb{P}(X_{1} \in C)$ for all $k\in
  \mathbb{N}$.
\end{rem}

\begin{thm}[necessary and sufficient conditions for geometric convergence]\label{thm:Equiv_lin_cvg}
  Under the standing assumptions, let $T_{i}$ be $\alpha_{i}$-averaged, $i
  \in I$ with $\alpha_{i} \le \alpha$ for some $\alpha<1$. The
  regularity condition in \cref{eq:LinRegStoch} is satisfied if and
  only if there exists $r \in [0,1)$ such that
  \begin{align}
    \label{eq:eq_condition_LinReg}
    \mathbb{E} [\dist(T_{\xi}x,C)] \le r \dist(x,C)\qquad \forall x
    \in \mathcal{H}.
  \end{align}
  Furthermore, condition \cref{eq:LinRegStoch} is necessary and
  sufficient for geometric convergence in expectation of \cref{algo:SSM}
  to the fixed point set $C$ as in \cref{eq:LinCvgInExpectation} with
  a uniform constant for all initial probability measures.
\end{thm}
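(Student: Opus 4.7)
The plan is to first establish the equivalence between the regularity condition \eqref{eq:LinRegStoch} and the one-step contraction \eqref{eq:eq_condition_LinReg}, and then to deduce both the sufficiency and necessity of \eqref{eq:LinRegStoch} for uniform geometric convergence in expectation as easy consequences.

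For the forward direction \eqref{eq:LinRegStoch}$\Rightarrow$\eqref{eq:eq_condition_LinReg}, I would apply the averaged inequality \eqref{d:averaged} with $y=P_{C}x$, which (a.s., on the nullset complement $\Omega\setminus N$ from \eqref{eq:N}) gives
\begin{equation*}
\dist^{2}(T_{\xi}x,C)\le \|T_{\xi}x-P_{C}x\|^{2}\le \dist^{2}(x,C)-\frac{1-\alpha}{\alpha}\|x-T_{\xi}x\|^{2}.
\end{equation*}
Taking expectations, recognizing $\mathbb{E}[\|x-T_{\xi}x\|^{2}]=R(x)$, and invoking \eqref{eq:LinRegStoch} yields $\mathbb{E}[\dist^{2}(T_{\xi}x,C)]\le (1-\tfrac{1-\alpha}{\alpha\kappa})\dist^{2}(x,C)$. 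Jensen's inequality applied to the concave $\sqrt{\cdot}$ then delivers \eqref{eq:eq_condition_LinReg} with $r=\sqrt{1-(1-\alpha)/(\alpha\kappa)}<1$.

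For the converse \eqref{eq:eq_condition_LinReg}$\Rightarrow$\eqref{eq:LinRegStoch}, I would exploit the triangle inequality $\dist(x,C)\le \|x-T_{\xi}x\|+\dist(T_{\xi}x,C)$ pointwise, take expectations, and bound $\mathbb{E}[\|x-T_{\xi}x\|]\le \sqrt{R(x)}$ by Jensen's inequality in the other direction. Combining with \eqref{eq:eq_condition_LinReg} produces $(1-r)\dist(x,C)\le \sqrt{R(x)}$, so squaring yields \eqref{eq:LinRegStoch} with $\kappa=(1-r)^{-2}$.

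For the second assertion, sufficiency of \eqref{eq:LinRegStoch} for \eqref{eq:LinCvgInExpectation} with a constant independent of $\mu$ is exactly the content of \cref{thm:Equiv_LinRegStoch_LinCvg}. For necessity, assume some $r\in[0,1)$ satisfies $\mathbb{E}[\dist(X_{k+1},C)]\le r\,\mathbb{E}[\dist(X_{k},C)]$ for every $k$ and every initial distribution. Specializing to the Dirac measure $\mu=\delta_{x}$ with arbitrary $x\in\mathcal{H}$ and taking $k=0$ immediately gives $\mathbb{E}[\dist(T_{\xi_{0}}x,C)]\le r\,\dist(x,C)$, which is \eqref{eq:eq_condition_LinReg}, and the equivalence from the first step then yields \eqref{eq:LinRegStoch}. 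The only real subtlety is keeping the directions of Jensen's inequality straight in the two halves of the equivalence; the reduction to arbitrary deterministic starting points in the necessity argument is the clean step that makes the uniformity hypothesis do exactly the work needed.
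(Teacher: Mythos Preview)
Your proposal is correct and essentially coincides with the paper's proof. The paper packages the forward implication by citing \cref{thm:Equiv_LinRegStoch_LinCvg} (whose proof is precisely the averaged-inequality-plus-Jensen argument you wrote out), and for the reverse implication it uses the same triangle-inequality estimate $\|X_1-X_0\|\ge \dist(X_0,C)-\dist(X_1,C)$ together with $\mathbb{E}\|X_1-X_0\|\le\sqrt{R(x)}$ (which the paper phrases via H\"older rather than Jensen, but it is the same inequality); the necessity argument via specialization to $\mu=\delta_x$ is identical.
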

\begin{proof}
  \cref{eq:LinRegStoch} implies \cref{eq:LinCvgInExpectation}, which
  in turn implies \cref{eq:eq_condition_LinReg} (with $X_{0} \sim
  \delta_{x}$) by \cref{thm:Equiv_LinRegStoch_LinCvg} with
  $r=\sqrt{1-\kappa^{-1} \tfrac{1- \alpha}{\alpha}}$. The other
  implication follows the same proof pattern as \cite[Theorem
  3.11]{Luke2016b}. We note that, by \cref{thm:disinteg}, if $X_{0}
  \sim \delta_{x}$ for $x \in \mathcal{H}$, then
  \begin{align*}
    \cex{\norm{X_{1} - X_{0}}}{\xi_{0}} = \norm{T_{\xi_{0}}x-x},
  \end{align*}
  hence by H\"older's inequality
  \begin{align*}
    \mathbb{E}[\norm{X_{1}-X_{0}}] \le \sqrt{R(x)}.
  \end{align*}
  Furthermore we can estimate
  \begin{align*}
    \norm{X_{1} - X_{0}} = \norm{X_{1} - P_{C}X_{1}+P_{C}X_{1} -
      X_{0}} \ge \dist(X_{0},C) - \dist(X_{1},C).
  \end{align*}
  Taking the expectation above, the assumption that $\mathbb{E}
  [\dist(X_{1},C)] \le r \mathbb{E}[\dist(X_{0},C)]$ yields
  \begin{align*}
    (\forall x \in \mathcal{H}) \qquad R(x) \ge (1-r)^{2}
    \dist^{2}(x,C),
  \end{align*}
  i.e. the constant $\kappa$ in \cref{eq:LinRegStoch} is finite with \
  $\kappa \le (1-r)^{-2} < \infty$. So \cref{eq:eq_condition_LinReg}
  implies \cref{eq:LinRegStoch}.
 
  For the last implication of the theorem, note that, in case
  \cref{eq:LinCvgInExpectation} is satisfied with the same constant $r
  \in (0,1)$ for all Dirac measures $\delta_{x}$ with $x \in
  \mathcal{H}$, then \cref{eq:eq_condition_LinReg} also holds (letting
  $X_{0} \sim \delta_{x}$) and hence by the above equivalence
  \cref{eq:LinRegStoch} is satisfied.  This completes the proof.
\end{proof}

\begin{rem}
  Conventional analytical strategies invoke strong convexity in order
  to achieve geometric convergence.  Our analysis makes no such
  assumption on the sets $C_i$.  \cref{thm:Equiv_lin_cvg} shows that
  geometric convergence is a by-product, mainly, of the regularity of the
  set of fixed points.  The results of \cite{Luke2016b} indicate that
  one could formulate a necessary regularity condition for {\em
    sublinear} convergence, which also might be useful for stochastic
  algorithms.
\end{rem}

\section{Applications}

We specialize the framework above to several well-known settings:
consistent convex feasibility, linear operator equations and in
particular Hilbert-Schmidt operators (i.e. linear integral equations).

\subsection{Feasibility and stochastic projections}
\label{sec:modesofcvg}

There are many algorithms for solving convex feasibility problems.  We
focus on the (conceptually) simplest of these, namely stochastic
projections.  In the context of \cref{algo:SSM}, $T_{i} = P_{i}$ is a
projector, $i \in I$, onto a nonempty closed and convex set $C_{i}
\subset \mathcal{H}$, $i \in I$ and $\mathcal{H}$ a Hilbert
space. Note that projectors are $\tfrac{1}{2}$-averaged operators
\cite[Proposition 4.8]{Bauschke2011} (also referred to as \emph{firmly
  nonexpansive} operators), so $\alpha_{i}=\tfrac{1}{2}$ for all $ i
\in I$, we then can choose the upper bound $\alpha=\tfrac{1}{2}$ as
well. Also note that $\Fix P_{i} = C_{i}$, $i \in I$.

As a first assertion we give an equivalent characterization
for the regularity property in \cref{eq:LinRegStoch} using just
properties of $R$. This characterization, known as
Kurdyka-\L{}ojasiewicz (KL) property, eliminates the term with the
distance to the usually unknown fixed point set $C$, but one needs to
be able to compute the first derivative of the function $R$.  For
convex sets this is unproblematic since $R$ is the expectation of the
squared distances to the convex sets $C_{i}$, see
\cref{lemma:further_prop_R}.

\begin{definition}[KL property]
  A convex, continuously differentiable function $\mymap{f}{\mathcal{H}}{\mathbb{R}}$ with $\inf_{x}
  f(x) =0$ and $S:= \argmin f \neq \emptyset$ is said to have the global KL
  property, if there exists a concave continuously differentiable
  function $\mymap{\varphi}{\mathbb{R}_{+}}{\mathbb{R}_{+}}$ with
  $\varphi(0)=0$ and $\varphi' >0$ such that
  \begin{align*}
    \varphi'(f(x)) \norm{\nabla f(x)} \ge 1 \qquad \forall x \in
    \mathcal{H} \setminus S.
  \end{align*}
\end{definition}

The following theorem is a direct consequence of \cite{Bolte2017}.
\begin{prop}[equivalent characterization of
  \cref{eq:LinRegStoch}]\label{thm:equiv_linConv_propR}
  Under the standing assumptions, let $T_{i}=P_{i}$ be projectors onto
  nonempty, closed and convex sets, $i \in I$. Then the regularity
  condition in \cref{eq:LinRegStoch} is satisfied with $\kappa>0$ if
  and only if $R(x) \le \frac{\kappa}{4} \norm{\nabla R(x)}^{2}$
  $\forall x \in \mathcal{H}$, i.e.\ $R$ has the global KL property.
\end{prop}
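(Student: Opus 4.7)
The plan is to verify that the function $R$ satisfies the hypotheses of Bolte--Nguyen--Peypouquet--Suter \cite{Bolte2017} and then directly invoke their equivalence between quadratic growth and a $1/2$-exponent KL inequality for convex continuously differentiable functions. Concretely, the equivalence they prove says: if $f:\mathcal{H}\to\mathbb{R}$ is convex, continuously differentiable, nonnegative, with $S=f^{-1}(0)\neq\emptyset$, then $\dist^2(x,S)\le\kappa f(x)$ for all $x\in\mathcal{H}$ is equivalent to $f(x)\le \tfrac{\kappa}{4}\|\nabla f(x)\|^2$ for all $x\in\mathcal{H}$. Taking $f=R$ and $S=C=R^{-1}(0)$ (which is exactly what \cref{lemma:prop_R}\ref{item:prop_R:C} gives us) translates this equivalence into the statement of the proposition. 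So the bulk of the work is to certify the hypotheses on $R$.

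First I would recall that for projectors onto nonempty closed convex sets one has $\|x-P_i x\|=\dist(x,C_i)$, so that
\begin{align*}
  R(x)=\int_{I}\dist^{2}(x,C_{i})\,\mathbb{P}^{\xi}(\dd{i}).
\end{align*}
The integrand $x\mapsto \dist^{2}(x,C_{i})$ is convex (since $C_i$ is convex) and continuously differentiable with $\nabla\dist^{2}(\cdot,C_{i})(x)=2(x-P_{i}x)$ (a standard Moreau-envelope computation). Convexity of $R$ therefore follows directly from convexity under the integral (and was already noted in \cref{lemma:prop_R}\ref{lemma:prop_C}). Continuous differentiability of $R$, together with the formula
\begin{align*}
  \nabla R(x)=2\,\mathbb{E}[x-P_{\xi}x],
\end{align*}
is obtained by interchanging gradient and expectation, which is justified by the Lipschitz estimate $\|x-P_i x\|\le 2\|x-c\|$ (for any fixed $c\in C$, as in the proof of \cref{lemma:prop_R}\ref{item:prop_C:Rcont}) together with Lebesgue's dominated convergence theorem applied to difference quotients. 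This is the content of what the paper calls \texttt{lemma:further\_prop\_R}, which I would invoke rather than reprove.

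With $R$ convex, continuously differentiable on the Hilbert space $\mathcal{H}$, nonnegative with $R^{-1}(0)=C\neq\emptyset$ (by Standing Assumption 2 and \cref{lemma:prop_R}\ref{item:prop_R:C}), the Bolte--Nguyen--Peypouquet--Suter equivalence applies directly. One direction gives: if \cref{eq:LinRegStoch} holds with constant $\kappa$, then $R(x)\le\tfrac{\kappa}{4}\|\nabla R(x)\|^{2}$ for all $x\in\mathcal{H}$, which is exactly the global KL property with desingularizer $\varphi(t)=\sqrt{\kappa t}$. The reverse direction gives that the gradient inequality implies \cref{eq:LinRegStoch} with the same constant.

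The main (and essentially only) obstacle is making sure the hypotheses of \cite{Bolte2017} are in place, specifically the continuous differentiability of $R$ and the identification of $\argmin R=R^{-1}(0)=C$; both are cleanly available from the earlier \cref{lemma:prop_R} and from the formula $\nabla R(x)=2(x-\mathbb{E}[P_{\xi}x])$. Once those are assembled, the theorem is a direct citation and nothing more needs to be computed.
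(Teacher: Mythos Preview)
Your proposal is correct and matches the paper's own proof essentially line for line: the paper simply applies \cite[Corollary 6]{Bolte2017} with $f=R$ and $\varphi(s)=\sqrt{\kappa s}$, citing \cref{lemma:further_prop_R} for convexity and differentiability of $R$. You spell out the verification of the hypotheses in more detail, but the argument is the same.
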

\begin{proof}
  Apply \cite[Corollary 6]{Bolte2017} with $\varphi(s):=
  \sqrt{\kappa s}$ and $f=R$ and note that $R$ is convex and
  differentiable (see \cref{lemma:further_prop_R}).
\end{proof}

\begin{thm}[uniform bounds]\label{thm:PropCond}
  Under the standing assumptions, suppose the regularity condition in
  \cref{eq:LinRegStoch} is satisfied and that $\mathcal{H}$ is
  separable and $T_{i}=P_{i}$ are projectors onto nonempty, closed and
  convex sets, $i \in I$. Then the probability of any point being
  feasible is uniformly bounded, i.e.\ $\mathbb{P}(x \in C_{\xi})\le r
  <1$ for all $x \in \mathcal{H}\setminus C$.
\end{thm}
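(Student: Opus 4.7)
The approach is to bound $R(x)$ from above using the probability that $x \notin C_\xi$, then combine with the regularity inequality \cref{eq:LinRegStoch} to extract a uniform bound on $\mathbb{P}(x \in C_\xi)$. The key observation is that for $T_i = P_i$, the residual function satisfies $R(x) = \mathbb{E}[\dist^2(x, C_\xi)]$, and the random quantity $\dist(x, C_\xi)$ is almost surely dominated by the deterministic quantity $\dist(x, C)$.

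First I would fix $x \in \mathcal{H} \setminus C$. By \cref{lemma:prop_R}\,(iv), $C$ is closed and convex, and by Standing Assumption 2 it is nonempty, so $c^{*} := P_C x$ is well-defined and $\dist(x, C) = \norm{x - c^{*}} > 0$. The very definition of $C$ in \cref{eq:stoch_feas_probl} gives $\mathbb{P}(c^{*} \in \Fix T_\xi) = 1$; since $\Fix P_i = C_i$ for all $i \in I$, this says $c^{*} \in C_\xi$ almost surely. Because $T_\xi x = P_\xi x$ achieves $\dist(x, C_\xi)$, we have $R(x) = \mathbb{E}[\dist^2(x, C_\xi)]$. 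On the event $\{x \in C_\xi\}$ the integrand vanishes; on its complement the nonexpansive projection bound $\dist(x, C_\xi) \le \norm{x - c^{*}} = \dist(x, C)$ holds a.s., because $c^{*} \in C_\xi$ a.s. Hence
\[
R(x) \;\le\; \dist^{2}(x, C)\,\mathbb{P}(x \notin C_\xi).
\]

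Combining with \cref{eq:LinRegStoch} yields
\[
\dist^{2}(x, C) \;\le\; \kappa\, R(x) \;\le\; \kappa\,\dist^{2}(x, C)\,\bigl(1 - \mathbb{P}(x \in C_\xi)\bigr).
\]
Dividing by $\dist^{2}(x, C) > 0$ gives $\mathbb{P}(x \in C_\xi) \le 1 - \kappa^{-1} =: r < 1$, which is the desired uniform bound. The argument is short and I do not foresee any real obstacle: the only conceptual step is to recognize that the nonexpansivity of metric projectors combined with $c^{*} \in C_\xi$ a.s. converts the pointwise random quantity $\dist(x, C_\xi)$ into a deterministic upper bound $\dist(x, C)$. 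Separability of $\mathcal{H}$ and convexity of the $C_i$ enter only to ensure nonemptyness and closedness of $C$, so that $P_C x$ exists; no further use of the separability assumption (beyond that already embedded in \cref{lemma:prop_R}) appears necessary.
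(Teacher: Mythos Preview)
Your argument is correct and in fact more direct than the paper's. Both proofs hinge on the same elementary observation --- that $\dist(x,C_{\xi})\le \norm{x-P_{C}x}=\dist(x,C)$ almost surely because $P_{C}x\in C_{\xi}$ a.s. --- but they exploit it differently. The paper combines this bound (at the level of first powers) with the triangle-type inequality $\dist(P_{\xi}x,C)\ge \dist(x,C)-\dist(x,C_{\xi})$ to obtain $\mathbb{E}[\dist(P_{\xi}x,C)]\ge \mathbb{P}(x\in C_{\xi})\,\dist(x,C)$, and then invokes \cref{thm:Equiv_lin_cvg} to convert the regularity condition into the one-step contraction $\mathbb{E}[\dist(P_{\xi}x,C)]\le r\,\dist(x,C)$; comparing the two gives $\mathbb{P}(x\in C_{\xi})\le r=\sqrt{1-\kappa^{-1}}$. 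You instead square the pointwise bound and plug it straight into \cref{eq:LinRegStoch} via $R(x)\le \dist^{2}(x,C)\,\mathbb{P}(x\notin C_{\xi})$, which avoids the detour through \cref{thm:Equiv_lin_cvg} and yields the sharper constant $r=1-\kappa^{-1}$. Two minor remarks: the inequality $\dist(x,C_{\xi})\le\norm{x-c^{*}}$ is just the definition of the distance, not a consequence of nonexpansivity; and $1-\kappa^{-1}\in[0,1)$ is automatic here, since your own estimate $R(x)\le\dist^{2}(x,C)$ combined with \cref{eq:LinRegStoch} forces $\kappa\ge 1$.
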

\begin{proof}
  It holds surely for all $x \in \mathcal{H}$
  \begin{align*}
    \dist(P_{\xi} x,C) \ge \dist(x,C) - \dist(x,C_{\xi}).
  \end{align*}
  This, together with the expectation
  \begin{align*}
    \mathbb{E}[\dist(x,C_{\xi})]= \mathbb{E}[\dist(x,C_{\xi})
    \1_{\{x\notin C_{\xi}\}}] \le \mathbb{E}[\dist(x,C) \1_{\{x\notin
      C_{\xi}\}}]= \dist(x,C) (1-\mathbb{P}(x\in C_{\xi}))
  \end{align*}
  yields, for $X_{0} \sim \delta_{x}$,
  \begin{align*}
    \mathbb{E}[\dist(X_{1},C)] \ge \mathbb{P}(x \in C_{\xi})
    \dist(x,C).
  \end{align*}
  Hence by \cref{thm:Equiv_lin_cvg}
  \begin{align*}
    1>r:=\sup_{x \in\mathcal{H}\setminus C}
    \frac{\mathbb{E}[\dist(X_{1},C)]}{\dist(x,C)} \ge \sup_{x \in
      \mathcal{H} \setminus C} \mathbb{P}(x \in C_{\xi}).
  \end{align*}
\end{proof}

\begin{thm}[finite vs. infinite convergence]\label{thm:no_finite_cvg}
  Under the standing assumptions, let $\mathcal{H}$ be separable and
  let $T_{i}=P_{i}$ be projectors ($i \in I$).  Then one of the
  following holds:
  \begin{enumerate}[label=(\roman*)]
  \item $\mathbb{P}(X_{1} \in C) = 1$ and $\mathbb{P}(X_{n} \in C) =
    1$ for all $n \in \mathbb{N}$,
  \item $\mathbb{P}(X_{1} \in C) < 1$ and $\mathbb{P}(X_{n}\in C)<1$
    for all $n \in \mathbb{N}$.
  \end{enumerate}
\end{thm}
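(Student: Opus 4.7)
The plan is to prove the two cases separately. For case~(i), I would show that $C$ is absorbing for the chain: by \cref{lemma:equiv_SCFP_CFP} and the construction of $N$ in \eqref{eq:N}, there is a $\mathbb{P}$-nullset $N$ such that $P_{\xi_k(\omega)}c=c$ for every $c\in C$, every $k\in\mathbb{N}_0$, and every $\omega\in\Omega\setminus N$. Induction on $k\geq 1$ along $\{X_1\in C\}\setminus N$ then yields $X_{k+1}(\omega)=P_{\xi_k(\omega)}X_k(\omega)=X_k(\omega)\in C$, so $\{X_1\in C\}\setminus N\subseteq\{X_n\in C\}$ for every $n$. Hence $\mathbb{P}(X_1\in C)=1$ forces $\mathbb{P}(X_n\in C)=1$ for all $n\in\mathbb{N}$.

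For case~(ii), I would argue the contrapositive: if $\mathbb{P}(X_n\in C)=1$ for some $n\geq 2$, then $\mathbb{P}(X_1\in C)=1$. Downward induction reduces this to the one-step implication
\[
\mathbb{P}(X_k\in C)=1\quad\Longrightarrow\quad\mathbb{P}(X_{k-1}\in C)=1\qquad(k\geq 2).
\]
The key algebraic input is the idempotence of projectors, $P_iP_i=P_i$: on the event $\{\xi_{k-1}=\xi_{k-2}\}$ one has
\[
X_k=P_{\xi_{k-1}}X_{k-1}=P_{\xi_{k-2}}\bigl(P_{\xi_{k-2}}X_{k-2}\bigr)=P_{\xi_{k-2}}X_{k-2}=X_{k-1}.
\]
Consequently $\{X_k\notin C\}\cap\{\xi_{k-1}=\xi_{k-2}\}=\{X_{k-1}\notin C\}\cap\{\xi_{k-1}=\xi_{k-2}\}$, and combining with the hypothesis $\mathbb{P}(X_k\notin C)=0$ yields $\mathbb{P}(X_{k-1}\notin C,\,\xi_{k-1}=\xi_{k-2})=0$. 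By Standing Assumption~1, $\xi_{k-1}$ is independent of $(X_{k-1},\xi_{k-2})$; conditioning on the latter then gives
\[
0=\mathbb{E}\bigl[\1_{\{X_{k-1}\notin C\}}\,\mathbb{P}^{\xi}(\{\xi_{k-2}\})\bigr].
\]
When $\mathbb{P}^{\xi}$ is purely atomic the factor $\mathbb{P}^{\xi}(\{\xi_{k-2}\})$ is strictly positive a.s., so $\1_{\{X_{k-1}\notin C\}}=0$ a.s., completing the induction.

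The main obstacle is handling the case where $\mathbb{P}^{\xi}$ has a continuous component, in which $\mathbb{P}^{\xi}(\{\xi_{k-2}\})=0$ a.s.\ and the diagonal event $\{\xi_{k-1}=\xi_{k-2}\}$ is negligible. For this regime I would replace the diagonal argument with the characterization $P_{\xi}y=P_Cy$ valid for $\mathbb{P}^{\xi}$-a.e.\ $\xi$ and every $y$ in the set
\[
B\equiv\{x\in\mathcal{H}:\mathbb{P}(P_{\xi}x\in C)=1\},
\]
which follows from $C\subseteq C_{\xi}$ a.s.\ together with the defining property of $B$: since $P_{\xi}y\in C_{\xi}$ is the closest point of $C_{\xi}$ to $y$ and simultaneously lies in the smaller set $C$, it must coincide with $P_Cy$. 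Combined with the surely-valid identity $P_{\xi_{k-2}(\omega)}X_{k-1}(\omega)=X_{k-1}(\omega)$ (because $X_{k-1}$ is itself a projection onto $C_{\xi_{k-2}}$), this will force $X_{k-1}\in C$ a.s.\ once one verifies that the random index $\xi_{k-2}$ almost surely lands in the $\mathbb{P}^{\xi}$-conull set (depending on $X_{k-1}$) on which $P_{\xi}X_{k-1}=P_CX_{k-1}$. This measure-theoretic step, in which one must control the non-independence between $\xi_{k-2}$ and the null set it is required to avoid, is the principal technical hurdle of the proof.
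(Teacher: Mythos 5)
Your treatment of case (i) coincides with the paper's, which disposes of it in one line: if $\mathbb{P}(X_1\in C)=1$ then $X_k=X_1$ a.s.\ for all $k\ge 1$ — exactly your absorption argument via \cref{lemma:equiv_SCFP_CFP} and \eqref{eq:N}. For case (ii) your route (contrapositive plus downward induction on the one-step implication) is genuinely different from the paper's, and it contains a real gap that you yourself flag. The purely atomic half is fine: idempotence gives $X_k=X_{k-1}$ on $\{\xi_{k-1}=\xi_{k-2}\}$, and independence of $\xi_{k-1}$ from $(X_{k-1},\xi_{k-2})$ turns $\mathbb{P}(X_{k-1}\notin C,\ \xi_{k-1}=\xi_{k-2})=0$ into $\mathbb{E}\bigl[\1_{\{X_{k-1}\notin C\}}\,\mathbb{P}^{\xi}(\{\xi_{k-2}\})\bigr]=0$, which kills $\{X_{k-1}\notin C\}$ when a.e.\ index is an atom. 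But the non-atomic case is not a deferrable technicality; it is where the argument breaks. Writing $B=\{y:\mathbb{P}(P_{\xi}y\in C)=1\}$, your observation that $P_iy=P_Cy$ for $\mathbb{P}^{\xi}$-a.e.\ $i$ whenever $y\in B$ is correct (using \cref{cor:Pxi_nullset} and uniqueness of the projection onto the convex set $C$), and it yields $\mathbb{P}^{\xi}(\{j:y\in C_j\})=0$ for every fixed $y\in B\setminus C$. What you actually need, however, is $\mathbb{P}^{\xi}(\{i:P_ix\in B\setminus C\})=0$ for $\mathcal{L}(X_{k-2})$-a.e.\ fixed $x$. The set $\{i:P_ix\in B\setminus C\}$ is covered by the null sets $\{j:P_ix\in C_j\}$, one for each of its possibly uncountably many elements $i$, each $i$ lying in its own null set; an uncountable union of null sets need not be null, and Fubini on $\mathbb{P}^{\xi}\otimes\mathbb{P}^{\xi}$ only recovers the diagonal event you have already discarded as negligible. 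So the ``principal technical hurdle'' you name is a genuinely missing idea, not a routine verification, and I do not see how to close it along these lines.

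The paper avoids backward induction entirely and argues forward. From $\mathbb{P}(X_1\in C)<1$ it extracts $x\in\supp\mu\setminus C$ with $p(x,C)<1$ and a point $y\in\supp p(x,\cdot)\setminus C$, then uses nonexpansiveness of the projectors together with $\norm{P_iy-y}\le\norm{P_ix-y}$ to show that from any $z\in\cb(y,\epsilon)$, and likewise from any $w\in\cb(x,\epsilon)$, the chain lands in $\cb(y,2\epsilon)$ with probability at least $p(x,\cb(y,\epsilon))>0$ (estimates \eqref{eq:no_finite_cvg_prob_estimate} and \eqref{eq:no_finite_cvg_prob_estimate2}). Iterating with radii $2^{-n}\epsilon$ and using $\mu(\cb(x,2^{-n}\epsilon))>0$ (since $x\in\supp\mu$, via \cref{thm:supp_measure}) gives $\mathbb{P}(X_n\notin C)>0$ for every $n$, with no case split on the nature of $\mathbb{P}^{\xi}$. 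If you wish to retain your contrapositive strategy, you would need a substitute for the diagonal event $\{\xi_{k-1}=\xi_{k-2}\}$ that carries positive probability in the non-atomic case — which is essentially what the paper's shrinking-balls construction supplies.
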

\begin{proof}
  \begin{enumerate}[label=(\roman*)]
  \item If $\mathbb{P}(X_{1} \in C) = 1$, then $X_{k} = X_{1}$ a.s.\
    for all $k \ge 1$.
  \item From $\int p(x,C) \mu(\dd{x})=\mathbb{P}(X_{1} \in C) < 1$ we
    get, that there is $x \in \supp \mu \setminus C$ with $p(x,C)<1$,
    where $\mu$ is the initial distribution. Since $p(x,\mathcal{H}
    \setminus C)>0$, there exists $y \in \supp p(x,\cdot) \setminus
    C$.  Then by \cref{thm:supp_measure} this implies that
    $p(x,\cb(y,\epsilon))>0$ for all $\epsilon >0$.\\
    Furthermore, one has for any $\epsilon>0$ that
    \begin{align}
      \label{eq:no_finite_cvg_prob_estimate}
      (\forall z \in \cb(y, \epsilon))\qquad p(z,\cb(y,2\epsilon)) \ge
      p(x,\cb(y,\epsilon))>0.
    \end{align}
    To see this, note that, for $\omega \in M(\epsilon):=\mysetc{
      \omega \in \Omega}{P_{\xi(\omega)} x \in \cb(y,\epsilon)}$, we
    have
    \begin{align*}
      \norm{P_{\xi(\omega)} z - y} &\le
      \norm{P_{\xi(\omega)}z-P_{\xi(\omega)}y}+\norm{P_{\xi(\omega)}
        y- y} \\ &\le \norm{z-y} + \norm{P_{\xi(\omega)} y -y} \\ &\le
      \norm{z-y} + \norm{P_{\xi(\omega)} x -y}\\ &\le 2 \epsilon.
    \end{align*}
    Here we have used nonexpansiveness of $P_{\xi}$ and the definition
    of a projection.  Now \eqref{eq:no_finite_cvg_prob_estimate}
    follows from the identity
    $\mathbb{P}(M_{k}(\epsilon))=p(x,\cb(y,\epsilon))>0$.\\
    Furthermore, one has for $\epsilon > 0$ that
    \begin{align}
      \label{eq:no_finite_cvg_prob_estimate2}
      (\forall w\in \cb(x,\epsilon))\qquad p(w,\cb(y,2\epsilon)) \ge
      p(x,\cb(y,\epsilon))>0.
    \end{align}
    To see this, note that for $\omega \in M(\epsilon)$, we have
    \begin{align*}
      \norm{P_{\xi(\omega)} w - y} &\le \norm{P_{\xi(\omega)} w -
        P_{\xi(\omega)} x} + \norm{P_{\xi(\omega)} x - y} \\
      &\le 2\epsilon.
    \end{align*}
    Now, fix $\epsilon>0$ such that both $\cb(y,\epsilon)\cap C =
    \emptyset$ and $\cb(x,\epsilon)\cap C=\emptyset$. We get for any
    $w \in \mathcal{H}$ and $n \in \mathbb{N}$
    \begin{align*}
      p^{n+1}(w,\cb(y,\epsilon)) \ge \int_{\cb(y,\tfrac{\epsilon}{2})}
      p(z,\cb(y,\epsilon)) p^{n}(w,\dd{z}) \ge
      p(x,\cb(y,\tfrac{\epsilon}{2}))
      p^{n}(w,\cb(y,\tfrac{\epsilon}{2})).
    \end{align*}
    So iteratively, denoting $\epsilon_{n}:=2^{-n}\epsilon$, we arrive
    at
    \begin{align*}
      p^{n+1}(w,\cb(y,\epsilon)) \ge \prod_{i=1}^{n}
      p(x,\cb(y,\epsilon_{i})) p(w,\cb(y,\epsilon_{n})).
    \end{align*}
    The last probability can be estimated for $w \in
    \cb(x,\epsilon_{n+1})$ by \eqref{eq:no_finite_cvg_prob_estimate2}
    through
    \begin{align*}
      p(w,\cb(y,\epsilon_{n})) \ge p(x,\cb(y,\epsilon_{n+1})).
    \end{align*}
    Summarizing, we have that $p^{n}(w,\cb(y,\epsilon))$ is locally
    uniformly bounded from below for $w \in \cb(x,\epsilon_{n})$. That
    implies
    \begin{align*}
      \mathbb{P}(X_{n} \in \mathcal{H}\setminus C) &=
      \int_{\mathcal{H}} p^{n}(w,\mathcal{H} \setminus C) \mu(\dd{w})
      \\ &\ge \int_{\cb(x,\epsilon_{n})} p^{n}(w,\cb(y,\epsilon))
      \mu(\dd{w}) \\ &\ge [p(x,\cb(y,\epsilon_{n}))]^{n}
      \mu(\cb(x,\epsilon_{n})) > 0,
    \end{align*}
    i.e.\ $\mathbb{P}(X_{n} \in C)<1$ for all $n\in\mathbb{N}$, as
    claimed.
  \end{enumerate}
\end{proof}

\begin{rem}
  \cref{thm:no_finite_cvg} can be interpreted as a lower bound on the
  complexity of the RFI analogous to the deterministic case
  \cite[Theorem 5.2]{Luke2016b}, where the alternating projection
  algorithm converges either after one iteration or after infinitely
  many.  Alternatively, the {\em stopping} or {\em hitting time} of a
  process is defined as
  \[
  T\equiv \inf\mysetc{n}{X_n\in C}.
  \]
  In this context, \cref{thm:no_finite_cvg} says that, either
  $\mathbb{P}(T=1)=1$ or $\mathbb{P}(T=n)<1$ for all
  $n\in\mathbb{N}$. Note, it could happen that
  $\mathbb{P}(T=\infty)=1$, in which case $\mathbb{P}(T=n)=0$ for all
  $n\in\mathbb{N}$.

\end{rem}

\begin{example}[finite and infinite
  convergence] \label{example:two_orth_halfspaces} With just two sets,
  the deterministic alternating projections algorithms can converge in
  finitely many steps.  But when the projections onto the respective
  sets are randomly selected, convergence might only come after
  infinitely many steps.  For example, let $C_{1}=\mathbb{R}_{+}\times
  \mathbb{R}$ and $C_{2}=\mathbb{R}\times \mathbb{R}_{+}$ and
  $\mathbb{P}(\xi=1)=0.3$, $\mathbb{P}(\xi=2)=0.7$. Then $C =
  \mathbb{R}_{+}\times \mathbb{R}_{+}$. Set $\mu=\delta_{x}$, where
  $x=\icol{-1 \\ -1}$. Then $\mathbb{P}(X_{1}\in C) = 0$ or more
  generally $\mathbb{P}(X_{n} \in C) = 1 - 0.3^{n}- 0.7^{n}<1$, $n \in
  \mathbb{N}$.  Now let $\mathbb{P}(\xi=1)=1$ and
  $\mathbb{P}(\xi=2)=0$, then $C=C_{1}$ and for $\mu$ as above
  $\mathbb{P}(X_{1} \in C)=1$ and so $\mathbb{P}(X_{n} \in C)=1$.
\end{example}

\begin{example}[no uniform geometric
  convergence] \label{example:intervals} In this example we show a
  sublinear convergence rate for infinitely many overlapping
  intervals.  This is in contrast to the convergence properties of
  finitely many intervals with nonempty interior, where one would
  expect a geometric rate.

  Let $\xi \sim \mathrm{unif}[\epsilon-\tfrac{1}{2},
  \tfrac{1}{2}-\epsilon]$ for some $\epsilon \in
  [0,\tfrac{1}{2})$. Define the nonempty and closed intervals $C_{r} =
  [r-\tfrac{1}{2},r+\tfrac{1}{2}]$, $r \in \mathbb{R}$.

  \begin{minipage}{1.0\linewidth}
    \centering
    \includegraphics[width=0.4\linewidth]{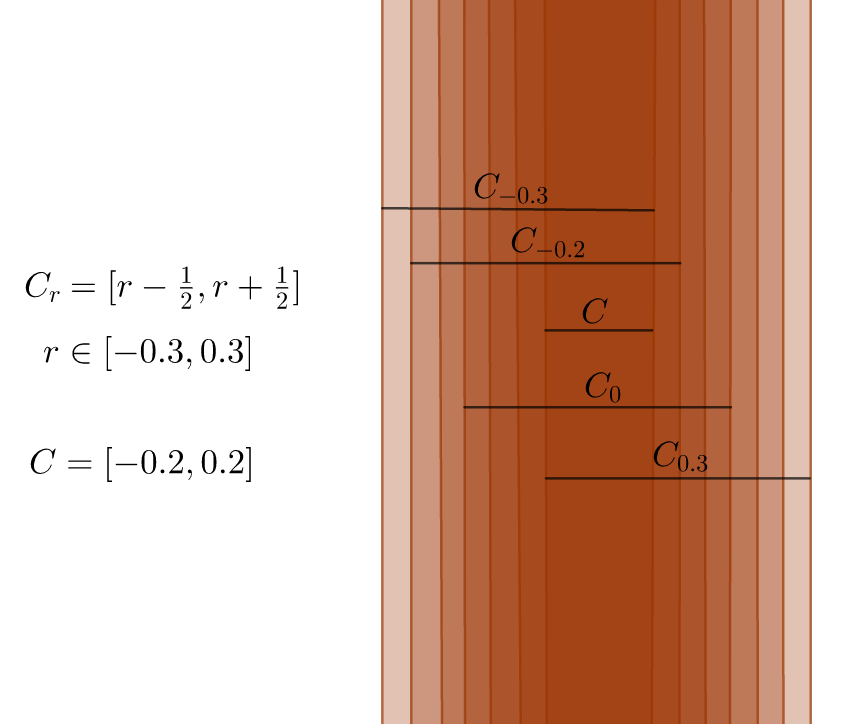}
    \captionof{figure}{}
  \end{minipage}

\noindent The projector onto these intervals is given by
\begin{align*}
  P_{r}x =
  \begin{cases}
    r+\tfrac{1}{2} & x \ge r+\tfrac{1}{2} \\
    r - \tfrac{1}{2} & x \le r - \tfrac{1}{2} \\
    x & r-\tfrac{1}{2} \le x \le r + \tfrac{1}{2} \\
  \end{cases}.
\end{align*}
The Lebesgue-density $\rho_{\epsilon}$ of $\xi$ is
\begin{align*}
  \rho_{\epsilon}(y) = \frac{1}{1-2\epsilon}
  \mathds{1}_{[\epsilon-\tfrac{1}{2}, \tfrac{1}{2}-\epsilon]}(y).
\end{align*}
One can compute
\begin{align*}
  R_{\epsilon}(x) &= \mathbb{E}[\abs{P_{\xi}x - x}^{2}] =
  \int_{\mathbb{R}}\abs{P_{r}x-x}^{2} \rho_{\epsilon}(r)\dd{r} =
  \frac{1}{1-2\epsilon} \int_{\epsilon-\tfrac{1}{2}}
  ^{\tfrac{1}{2}-\epsilon} \abs{P_{r} x-x}^{2} \dd{r} \\
  &= \frac{1}{1-2\epsilon} \mathds{1}_{[\epsilon,\infty)}(\abs{x})
  \frac{ (\abs{x} -\epsilon)^{3} +\min(1-\abs{x}-\epsilon,0)^{3} }{3}.
\end{align*}
Now, let us examine regularity properties. For the case $\epsilon\in
[0,\tfrac{1}{2})$, the problem is a consistent feasibility
problem with $C=[-\epsilon,\epsilon]$. While the regularity condition
in \cref{eq:LinRegStoch} is trivially satisfied for $\abs{x}\le
\epsilon$, for $\epsilon\le \abs{x} \le 1-\epsilon$ we find
$R_{\epsilon}(x) =
\tfrac{1}{1-2\epsilon}\tfrac{(\abs{x}-\epsilon)^{3}}{3}$ and
$\dist^{2}(x,C) = (\abs{x}-\epsilon)^{2}$. So the regularity property in
\cref{eq:LinRegStoch} is not satisfied for any $\kappa > 0$ here. That
means by \cref{thm:Equiv_lin_cvg}, that we cannot expect uniform
geometric convergence (i.e.\ there is no $r \in [0,1)$ with
$\mathbb{E}[\dist(X_{k+1},C)] \le r \mathbb{E}[\dist(X_{k},C)]$, where
$X_{0} \sim \delta_{x}$, $x \in \mathcal{H}$).
\end{example}
\begin{example}[uniform geometric convergence] \label{example:lines} We
  provide here a concrete example where geometric convergence of the
  RFI is achieved.  This is somewhat
  surprising since the angle between the sets can become arbitrarily
  small.  In the deterministic setting, this results in arbitrarily
  slow convergence of the algorithm.  This provides some intuition for
  why stochastic algorithms can outperform deterministic variants.

  Let $C_{\alpha} := \mathbb{R} e_{\alpha}$ with
  $e_{\alpha}=\icol{\cos(\alpha)\\ \sin(\alpha)}$, $\alpha \in
  [0,2\pi)$ and let $\xi \sim \mathrm{unif}[0,\beta]$, where $\beta
  \in (0,\tfrac{\pi}{2})$.

  \begin{minipage}{1.0\linewidth}
    \centering
    \includegraphics[width=0.4\linewidth]{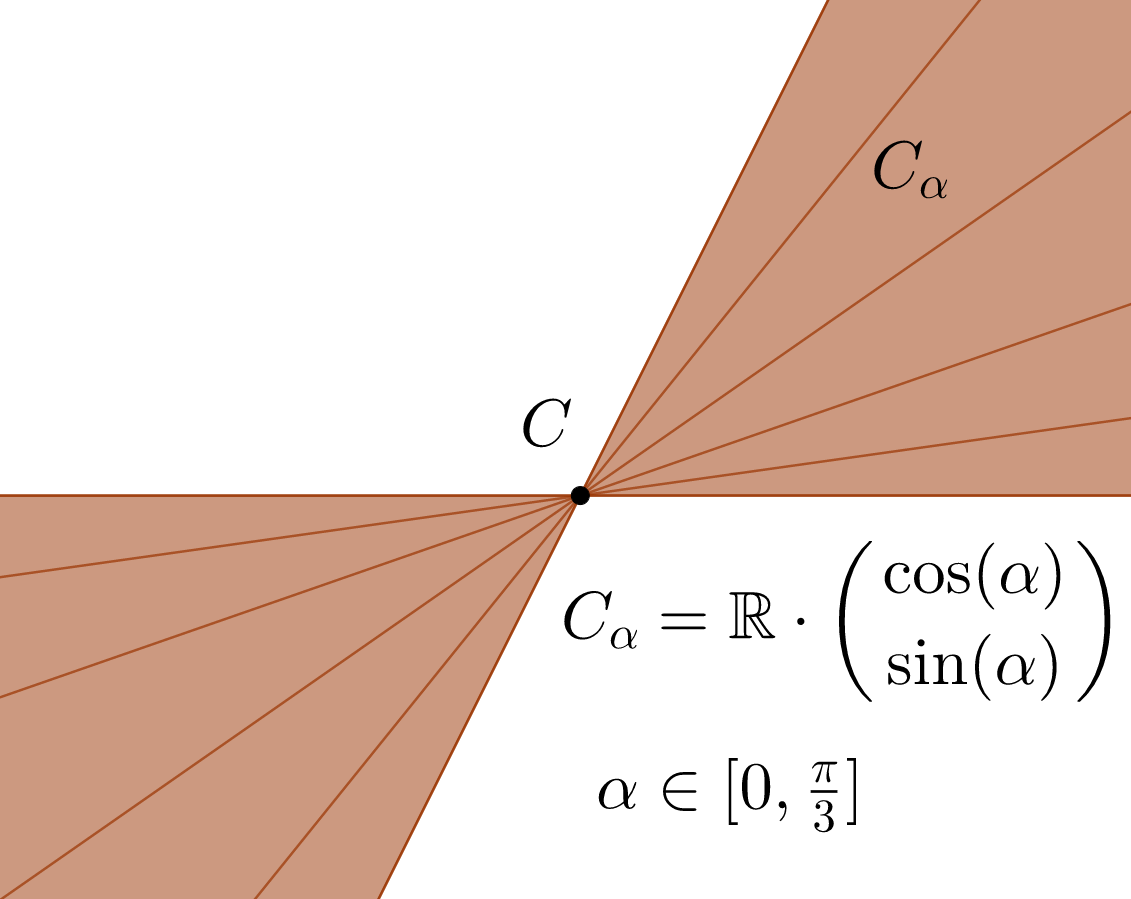}
    \captionof{figure}{}
  \end{minipage}

  \noindent We have $C = \{0\}$, so $\dist(x,C)=\norm{x}$ and the
  density of $\xi$ is $\rho_{\beta}(\alpha) =
  \tfrac{1}{\beta}\mathds{1}_{[0,\beta]} (\alpha)$. The projector onto
  the linear subspace $C_{\alpha}$ of $\mathbb{R}^{2}$ is given by
  \begin{align*}
    P_{\alpha}(x) = x - \act{
      \begin{pmatrix}
        \sin(\alpha) \\
        -\cos(\alpha)
      \end{pmatrix}, x}
    \begin{pmatrix}
      \sin(\alpha) \\
      -\cos(\alpha)
    \end{pmatrix}.
  \end{align*}
  We find then
  \begin{align*}
    R_{\beta}(x) &= \frac{1}{\beta} \int_{0}^{\beta}
    \norm{P_{\alpha}x-x}^{2} \dd{\alpha} = \frac{1}{\beta}
    \int_{0}^{\beta} (x_{1} \sin(\alpha) -x_{2}\cos(\alpha))^{2}
    \dd{\alpha} \\ &= \frac{1}{\beta}\left[ x_{1}^{2}
      \left(\frac{\beta-\sin(\beta)\cos(\beta)}{2}\right) + x_{2}^{2}
      \left(\frac{\beta+\sin(\beta)\cos(\beta)}{2}\right) - x_{1}x_{2}
      \sin^{2}(\beta)\right].
  \end{align*}
  Using that for $x=\lambda e_{\alpha}$ with $\lambda\ge 0$ holds
  $\dist(x,C) = \lambda$ and $R_{\beta}(x) =
  \lambda^{2}R_{\beta}(e_{\alpha})$ and employing trigonomertric
  calculation rules, we find the regularity constant in
  \cref{eq:LinRegStoch} not to be smaller than
  \begin{align*}
    \kappa=\sup_{x \in \mathbb{R}^{2}}
    \frac{\dist^{2}(x,C)}{R_{\beta}(x)} = \sup_{\alpha \in [0,2\pi)}
    \frac{8\beta}{2\beta -\sin(2\beta-2\alpha)-\sin(2\alpha)} =
    \frac{4\beta}{\beta-\sin(\beta)},
  \end{align*}
  where the last supremum is attained at $\alpha=\tfrac{\beta}{2}$. So
  from \cref{thm:Equiv_LinRegStoch_LinCvg} we get uniform geometric
  convergence.
\end{example}

\begin{example}[disks on a circle] \label{example:disks} This example
  illustrates \cref{thm:PropCond}.  Let $C_{\alpha}:= \cb(\rho
  e_{\alpha},1) \subset \mathbb{R}^{2}$, where $0<\rho<1$ and
  $e_{\alpha}=\icol{\cos(\alpha)\\ \sin(\alpha)}$, $\alpha \in
  [0,2\pi)$ and let $\xi \sim \mathrm{unif}[0,2\pi]$.

  \begin{minipage}{0.49\linewidth}
    \centering
    \includegraphics[width=\linewidth]{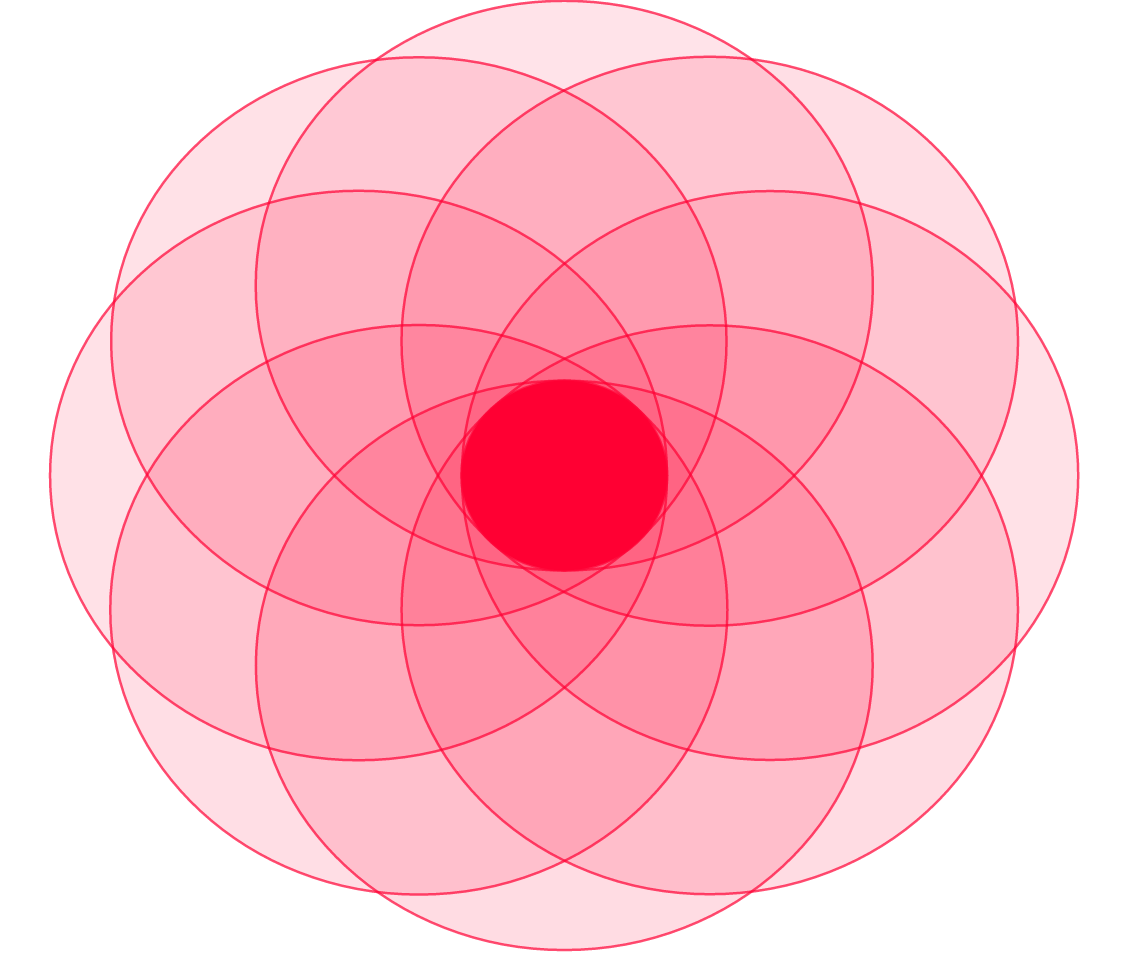}
    \captionof{figure}{}
  \end{minipage}
  \begin{minipage}{0.49\linewidth}
    \centering
    \includegraphics[width=\linewidth]{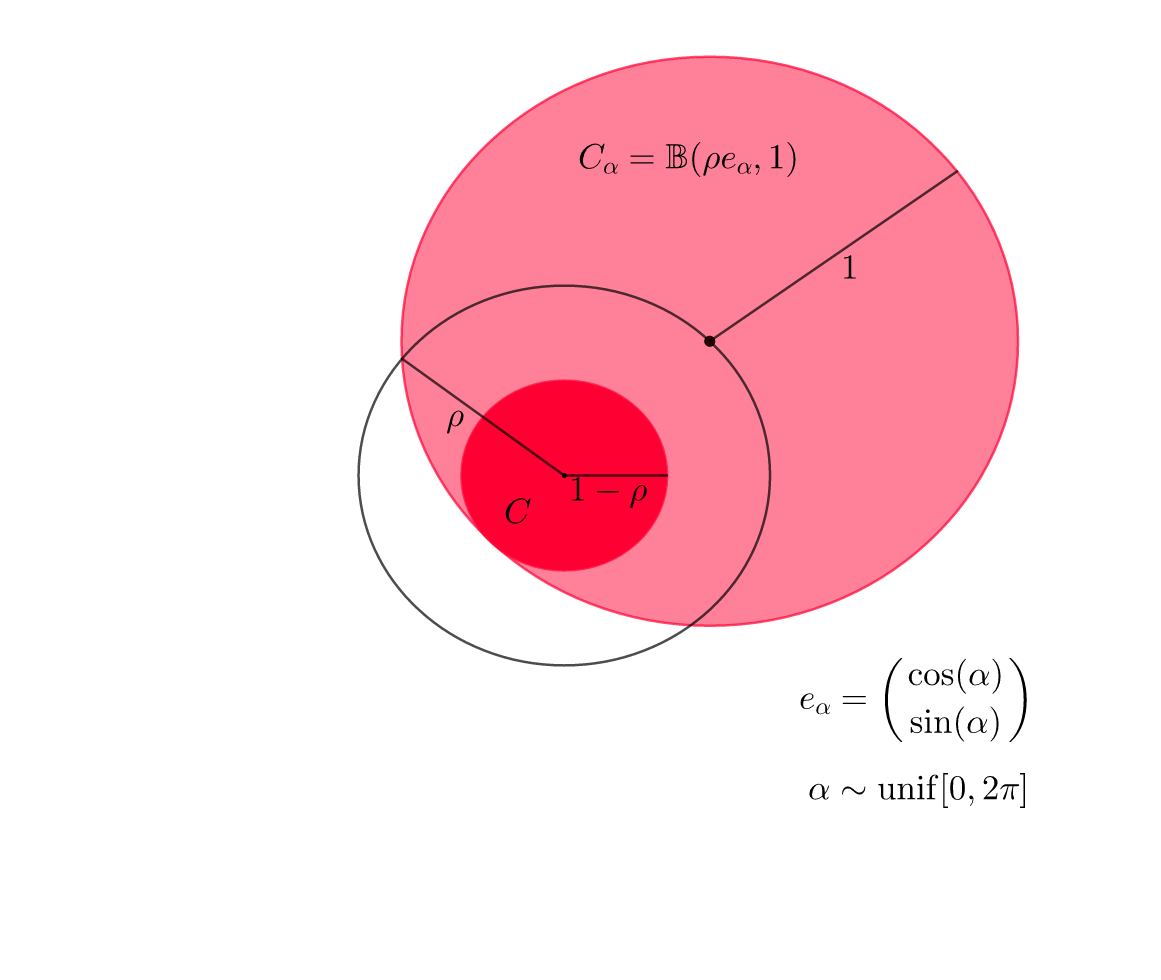}
    \captionof{figure}{}
  \end{minipage}

  \noindent The intersection is given by $C = \cb(0,1-\rho)$. We show
  next that sets with this configuration do not satisfy
  \eqref{eq:LinRegStoch}.  To see this we show that there is a
  sequence $(x_{n})_{n} \subset \mathbb{R}^{2}$ with $\mathbb{P}(x_{n}
  \in C_{\xi}) \to 1$ as $n \to \infty$.  By \cref{thm:PropCond} we
  conclude that \eqref{eq:LinRegStoch} cannot hold.  Indeed, let $x =
  x(\lambda)=\lambda \icol{1\\ 0}$ with $ \lambda \ge 1-\rho$, then
  \begin{align*}
    \mathbb{P}(x\in C_{\xi}) &= \frac{1}{2\pi} \int_{0}^{2\pi}
    \mathds{1}\{\norm{x-\rho e_{\alpha}} \le 1\} \dd{\alpha} \\ &=
    \frac{1}{2\pi} \int_{0}^{2\pi}
    \mathds{1}\{\lambda^{2}+\rho^{2}-2\lambda\rho \cos(\alpha) \le 1\}
    \dd{\alpha} \\ &= \frac{1}{2\pi} \int_{-\beta}^{\beta} 1
    \dd{\alpha} \\ &= \frac{\beta}{\pi},
  \end{align*}
  where $\beta = \beta(\lambda) = \cos^{-1}\left(
    \tfrac{\lambda^{2}+\rho^{2}-1}{2\lambda\rho} \right)$, if
  $\lambda\le 1+\rho$. We have $\beta(\lambda) \to \pi$ as $\lambda
  \to 1-\rho$, so $\mathbb{P}(x(\lambda) \in C_{\xi}) \to 1$ as
  $\lambda\to 1-\rho$.

  In contrast to the case where $\rho\in(0,1)$, the extreme cases
  where $\rho=0$ and $\rho=1$ {\em do } satisfy
  \eqref{eq:LinRegStoch}. This will not be shown
  here.
\end{example}

\subsection{RFI with two families of mappings}
\label{sec:twoSets}
The set feasibility examples above lead very naturally to the more
general context of mappings $\mymap{T_{i}}{G}{G}$, $i \in I$ and
$\mymap{S_{j}}{G}{G}$, $j \in J$ on a metric space $(G,d)$, where
$I,J$ are arbitrary index sets. Here we envision the scenario where
$C_T := \mysetc{x \in G}{\mathbb{P}(x \in \Fix T_{\xi}) = 1}$ and $C_S
:= \mysetc{x \in G}{\mathbb{P}(x \in \Fix S_{\zeta}) = 1}$ are
distinctly different sets, possibly nonintersecting.  Let
$(\Omega,\mathcal{F},\mathbb{P})$ be a probability space and let
$\mymap{\xi}{\Omega}{I}$, $\mymap{\zeta}{\Omega}{J}$ be two random
variables. Let $(\xi_{n})_{n\in\mathbb{N}}$ be an iid sequence with
$\xi_{n} \overset{\text{d}}{=} \xi$ and $(\zeta_{n})_{n \in
  \mathbb{N}}$ iid with $\zeta_{n} \overset{\text{d}}{=} \zeta$. The
two sequences are assumed to be independent of each other.  Let $\mu$
be a probability measure on $(G, \mathcal{B}(G))$. Consider the
stochastic selection method for two families of mappings
\begin{algorithm}[H]
  \caption{RFI for two families of mappings}\label{algo:SSM2}
  \begin{algorithmic}[0]
    \Require{$X_{0} \sim \mu$} \For{$k=0,1,2,\ldots$} \State{ $
      X_{k+1} = S_{\zeta_{k}} T_{\xi_{k}} X_{k}$}
    \EndFor
    \State \Return $\{X_{k}\}_{k \in \mathbb{N}}$
  \end{algorithmic}
\end{algorithm}
\vspace*{-\baselineskip} Note, that this structure of two families of
mappings is a special case of \cref{algo:SSM}, just set $\tilde
T_{(i,j)} = S_{j} T_{i}$, where $(i,j) \in \tilde I = I \times J$ and
$\tilde \xi = (\xi,\zeta)$.  Also the Markov chain property is still
satisfied, the transition kernel takes the form $p(x,A) =
\mathbb{P}(S_{\zeta} T_{\xi}x \in A)$ for $x \in G$ and $A \in
\mathcal{B}(G)$. An advantage of this formulation is that properties
of the two families $\{S_{j}\}_{j \in J}$ and $\{T_{i}\}_{i \in I}$
can be analyzed more specifically, and independently.  As long as the
mapping $\tilde T$ satisfies the properties needed for convergence of
the RFI, then convergence of the RFI for two families of mappings
follows.  At the very least, we need
\[
C\equiv \mysetc{x \in G}{\mathbb{P}(x \in \Fix \tilde T_{\tilde\xi}) =
  1}\neq\emptyset.
\]
From this it is easy to see that for convergence the set $C_T$ {\em
  could} be empty, but the set $C_S$ {\em must} be nonempty.
\begin{example}[consistent feasibility]
  Revisit \cref{example:two_orth_halfspaces}. We had
  $C_{1}=\mathbb{R}_{+}\times \mathbb{R}$ and $C_{2}=\mathbb{R}\times
  \mathbb{R}_{+}$. Set $I=\{1\}$ and $J=\{2\}$, then the algorithm is
  the deterministic alternating projections method. One has
  $\mathbb{P}(X_{1} \in C) = 1$ for all initial distributions.
\end{example}

\begin{example}[inconsistent stochastic feasibility]\label{example:inconsist}
  In this example we show that the framework established here is not
  exclusively limited to consistent feasibility.  Consider the
  (trivially convex, nonempty and closed) set $S\equiv \{(0,10)\}$
  together with the collection of balls in \cref{example:disks},
  $C_{\alpha}:= \cb(\rho e_{\alpha},1) \subset \mathbb{R}^{2}$, where
  $0\leq\rho\leq 1$ and $e_{\alpha}=\icol{\cos(\alpha)\\
    \sin(\alpha)}$, $\alpha \in [0,2\pi)$ and let $\xi \sim
  \mathrm{unif}[0,2\pi]$.  The intersection of the disks is given by
  $C_T = \cb(0,1-\rho)$ where $T_{\alpha}\equiv P_{C_{\alpha}}$ is the
  metric projection onto $C_{\alpha}$.  Although $S \cap C_{\alpha} =
  \emptyset$ for all $\alpha \in [0,2\pi)$, still the fixed point set
  for the mapping in \cref{algo:SSM2} (where $S_{\zeta} = P_{S}$) is
  $C=\{(0,10)\}$, and this is found after one iteration for any
  initial probability distribution $\mu$, where $X_0\sim\mu$.

  This is indeed a special example, but points to the richness of
  inconsistent stochastic feasibility, which will be studied in
  greater depth in a follow-up paper.
\end{example}

\subsection{Linear Operator equations}
\label{sec:appl}

There are several applications of the RFI to the feasibility problem
\cite{ButnariuFlam1995}, \cite{Butnariu1995}. We want to focus first
on linear operator equations in the separable Hilbert space
$\mathcal{H} = L_{2}([a,b])$. Let
$\mymap{T}{\mathcal{H}}{\mathcal{H}}$ be a bounded linear operator, we
want to find $x \in \mathcal{H}$, such that
\begin{align*}
  T x = g,
\end{align*}
for a given $g \in \mathcal{H}$. Clearly this is possible only if $g
\in R(T)$. The idea in \cite{Butnariu1995} to solve $Tx=g$ is to consider the
family of evaluation mappings $\mymap{ \varphi_{t}}{ \mathcal{H}}{
  \mathbb{R}}$, $t \in [a,b]$, which are given by
\begin{align*}
  \varphi_{t}(x) := (Tx)(t).
\end{align*}
Define the affine subspaces $C_{t} := \mysetc{x \in \mathcal{H}}{
  \varphi_{t}(x) = g(t)}$, $t \in [a,b]$. Consider the probability
space $(\Omega,\mathcal{F},\mathbb{P}) = ([a,b], \mathcal{B}([a,b]),
\tfrac{\lambda}{b-a})$, where $\lambda$ is the Lebesgue-measure.  Let
$\mymap{\xi}{(\Omega, \mathcal{F}, \mathbb{P})}{([a,b],
  \mathcal{B}([a,b]))}$ be a random variable with
$\mathbb{P}^{\xi}=\mathbb{P}=\tfrac{\lambda}{b-a}$.  Then for $g \in
R(T)$, we have that
\begin{align*}
  Tx=g \qquad \text{if and only if} \qquad x \in C:=\mysetc{y \in
    \mathcal{H}}{\mathbb{P}(y \in C_{\xi})=1}.
\end{align*}
So the linear operator equation becomes a stochastic feasibility
problem.\\

In order to be able to compute projections onto the sets $C_{t}$, $t
\in [a,b]$, we need the evaluation functionals $\varphi_{t}$ to be
continuous, i.e.\ $\norm{\varphi_{t}} < \infty$ for almost all $t \in
[a,b]$. By the Riesz representation theorem there exists a unique
$u_{t} \in \mathcal{H}$ with $\varphi_{t}(x) = \act{u_{t},x}$ for all
$x \in \mathcal{H}$ and almost all $ t \in [a,b]$. We conclude that
the projection onto $C_{t}$ takes the form
\begin{align*}
  P_{t} x = x + \frac{g(t) - (Tx)(t)}{\norm{u_{t}}^{2}} u_{t} \qquad x
  \in L_{2}([a,b]).
\end{align*}

\begin{example}[linear integral equations]\label{example:lin_int_eq}
  Concretely, consider an integral equation of the first kind in the
  separable Hilbert space $L_{2}([a,b])$
  \begin{align*}
    (Tx)(t) = \int_{a}^{b} K(t,s) x(s) \dd{s} = g(t) \qquad t \in
    [a,b],
  \end{align*}
  with $g \in L_{2}([a,b])$.  For $K \in L_{2}([a,b]\times[a,b])$, $T$
  is a continuous linear compact operator \cite[Theorem
  8.15]{alt2002lineare} (a Hilbert-Schmidt operator).  For the Riesz
  representation of the evaluation functionals we have that
  $\varphi_{t}(x)=(Tx)(t) = \act{u_{t},x}$, $t \in [a,b]$, as well as
  $u_{t} = K(t,\cdot)$ and hence $\norm{\varphi_{t}} \le
  \norm{K(t,\cdot)}<\infty$.
\end{example}

\begin{example}[differentiation]
  Let $K(t,s) = \1_{[a,t]}(s) = u_{t}(s)$, i.e.\ $(Tx)(t) =
  \int_{a}^{t} x(s) \dd{s}$ and suppose $g \in C^{1}([a,b])$, then $Tx
  = g$ if and only if $ x = g'$ almost surely and $g(a)=0$. The
  projectors take the form
  \begin{align*}
    P_{t}x = x - \frac{g(t) - \int_{a}^{t}x(s)\dd{s}}{t-a} \1_{[a,t]}.
  \end{align*}
\end{example}

\appendix

\section{Appendix}
\label{sec:toolbox}

\begin{lemma}[slices of product $\sigma$-field, see Proposition 3.3.2
  in \cite{bogachev2007measure}] \label{lemma:slices_prod_field} Let
  $(\Omega_{i},\mathcal{F}_{i})$, $i=1,2$ be two measurable spaces and
  $M \in \mathcal{F}_{1} \otimes \mathcal{F}_{2}$. Then for
  $\omega_{1} \in \Omega_{1}$ holds $M_{\omega_{1}}:=
  \mysetc{\omega_{2}\in \Omega_{2}}{(\omega_{1},\omega_{2}) \in M} \in
  \mathcal{F}_{2}$.
\end{lemma}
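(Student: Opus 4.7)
The plan is to proceed by the classical \emph{good sets principle}, exploiting the fact that the product $\sigma$-field $\mathcal{F}_{1}\otimes\mathcal{F}_{2}$ is by definition generated by the measurable rectangles $A\times B$ with $A\in\mathcal{F}_{1}$, $B\in\mathcal{F}_{2}$. Fix $\omega_{1}\in\Omega_{1}$ throughout and define
\[
\mathcal{G}:=\mysetc{M\in\mathcal{F}_{1}\otimes\mathcal{F}_{2}}{M_{\omega_{1}}\in\mathcal{F}_{2}}.
\]
The goal is then simply to show $\mathcal{G}=\mathcal{F}_{1}\otimes\mathcal{F}_{2}$, from which the lemma is immediate.

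First I would verify that every measurable rectangle lies in $\mathcal{G}$: for $M=A\times B$ one has $M_{\omega_{1}}=B$ if $\omega_{1}\in A$ and $M_{\omega_{1}}=\emptyset$ otherwise, and both possibilities are elements of $\mathcal{F}_{2}$. Next I would check that $\mathcal{G}$ is itself a $\sigma$-algebra on $\Omega_{1}\times\Omega_{2}$ by exploiting the fact that the slicing operation $M\mapsto M_{\omega_{1}}$ commutes with complementation and with countable unions. Concretely, $(M^{c})_{\omega_{1}}=(M_{\omega_{1}})^{c}$ and $\bigl(\bigcup_{n}M_{n}\bigr)_{\omega_{1}}=\bigcup_{n}(M_{n})_{\omega_{1}}$, so closure of $\mathcal{F}_{2}$ under these operations transfers directly to $\mathcal{G}$; clearly $\Omega_{1}\times\Omega_{2}\in\mathcal{G}$.

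Since $\mathcal{G}$ is a $\sigma$-algebra containing the rectangles that generate $\mathcal{F}_{1}\otimes\mathcal{F}_{2}$, it must coincide with $\mathcal{F}_{1}\otimes\mathcal{F}_{2}$, proving the claim for the arbitrarily chosen $\omega_{1}$.

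Honestly, there is no genuine obstacle here; this is a standard measure-theoretic exercise. The only subtlety worth being careful about is the set-algebra identity $(M^{c})_{\omega_{1}}=(M_{\omega_{1}})^{c}$ taken inside $\Omega_{2}$ (rather than inside $\Omega_{1}\times\Omega_{2}$), but this is immediate once the definition of the slice is unpacked. An alternative route would be a Dynkin/$\pi$-$\lambda$ argument, but it is heavier than needed because no finiteness assumption on the measure is involved and the collection of good sets is already closed under arbitrary countable operations.
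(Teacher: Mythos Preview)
Your argument is correct and is exactly the standard good-sets proof of this fact. The paper itself does not supply a proof at all; it merely cites Proposition~3.3.2 in Bogachev, so there is nothing to compare against beyond noting that your write-up is precisely the classical argument one finds in that reference.
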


\begin{thm}[dense sets in separable metric space]\label{thm:dense_sets_separable_space}
  Let $(G,d)$ be a Polish space (complete separable metric
  space). Then for any $A \subset G$, there is a dense countable
  subset $\{a_{n}\}_{n \in \mathbb{N}} \subset A$ and if $A$ is closed
  then even $A = \cl \{a_{n}\}$ ($\cl U$ denotes the closure of the
  set $U \subset G$ w.r.t.\ the metric $d$).
\end{thm}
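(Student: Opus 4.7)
The plan is to exploit the fact that separability is a hereditary property for metric spaces: a subset of a separable metric space, regarded as a metric space with the restricted metric, is itself separable. The Polish assumption (completeness) is not actually required for this first claim; only separability of $G$ matters. Completeness plays no role here since the assertion is purely about density.

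First I would fix a countable dense subset $\{x_n\}_{n\in\mathbb{N}} \subset G$ provided by separability of $G$. For each pair $(n,m) \in \mathbb{N}\times\mathbb{N}$, look at the open ball $B(x_n, 1/m)$. If $B(x_n, 1/m) \cap A \neq \emptyset$, pick one point $a_{n,m}$ from this intersection (invoking the axiom of countable choice, since the index set is countable). Collect all chosen points into a countable set $\{a_{n,m}\} \subset A$, which can be re-indexed as $\{a_k\}_{k\in\mathbb{N}}$.

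Next I would verify density of $\{a_k\}$ in $A$. Fix $a \in A$ and $\varepsilon > 0$. Choose $m$ with $2/m < \varepsilon$, and choose $n$ with $d(x_n, a) < 1/m$, which exists since $\{x_n\}$ is dense in $G$. Then $a \in B(x_n, 1/m) \cap A$, so this intersection is nonempty and $a_{n,m}$ is defined, satisfying $d(a_{n,m}, x_n) < 1/m$. By the triangle inequality,
\[
d(a, a_{n,m}) \leq d(a, x_n) + d(x_n, a_{n,m}) < \frac{2}{m} < \varepsilon,
\]
which shows that $\{a_k\}$ is dense in $A$.

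Finally, for the last assertion, if $A$ is closed then $A = \cl A \supset \cl \{a_k\} \supset \{a_k\}$, and the density just established gives $A \subset \cl \{a_k\}$, so $A = \cl \{a_k\}$. I do not expect a real obstacle in this argument; the only subtle point is the mild use of countable choice when selecting representatives $a_{n,m}$, which is standard and uncontroversial.
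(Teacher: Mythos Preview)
Your proposal is correct and follows essentially the same approach as the paper: fix a countable dense set $\{x_n\}$ in $G$, select one point of $A$ from each ball $B(x_n,1/m)$ that meets $A$, and use the triangle inequality to show the resulting countable set is dense in $A$. Your write-up is in fact slightly cleaner, since you explicitly choose $n$ with $d(x_n,a)<1/m$ (rather than $<\epsilon$), which guarantees that $a_{n,m}$ is defined before you invoke it.
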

\begin{proof}
  Since $G$ is separable there exists a dense and countable subset
  $\{u_{n}\}_{n \in \mathbb{N}} \subset G$ with $G = \cl
  \{u_{n}\}_{n}$. By denseness of $\{u_{n}\} \subset G$, for any $x
  \in G$ and any $\epsilon>0$, there is $u_{n}$, where $n$ is
  depending on $x$ and $\epsilon$, with $d(u_{n},x) < \epsilon$. Let
  $\epsilon >0$ and choose $a_{n}^{\epsilon} \in
  \mathbb{B}(u_{n},\epsilon) \cap A$, $n \in \mathbb{N}$, if the
  intersection is nonempty. The set $\tilde A := \{a_{n}^{1/m}\}_{n,m
    \in \mathbb{N}} \subset A$ is nonempty and countable as union of
  countable sets. It holds for any $a \in A$ and any $\epsilon > 0$
  that $\exists n,m$ with $1/m < \epsilon$ and $d(a,u_{n})<\epsilon$,
  hence
  \begin{align*}
    d(a,a_{n}^{1/m}) \le d(a,u_{n}) + d(u_{n},a_{n}^{1/m})
    < 2 \epsilon,
  \end{align*}
  i.e. $\tilde A \subset A$ dense. So then $A \subset \cl
  \tilde A$ and if $A$ is closed, then also $\cl \tilde A \subset A$.
\end{proof}

\begin{thm}[support of a measure]\label{thm:supp_measure}
  Let $(G, d)$ be a Polish space and $\mathcal{B}(G)$ its
  Borel $\sigma$-algebra. Let $\pi$ be a measure on
  $(G,\mathcal{B}(G))$ and define its support via
  \begin{align*}
    \supp\pi = \mysetc{x \in G}{\pi(\mathbb{B}(x,\epsilon)) > 0 \;
      \forall \epsilon >0}.
  \end{align*}
  Then
  \begin{enumerate}
  \item $\supp \pi \neq \emptyset$, if $\pi \neq 0$.
  \item $\supp \pi$ is closed.
  \item $\pi(A) = \pi(A \cap \supp \pi)$ for all $A \in
    \mathcal{B}(G)$, i.e. $\pi((\supp \pi)^{c}) = 0$.
  \item\label{item:supp_measure_closedsets} For any closed set $S
    \subset G$ with $\pi(A\cap S) = \pi(A)$ for all $A \in
    \mathcal{B}(G)$ holds $\supp \pi \subset S$.
  \end{enumerate}
\end{thm}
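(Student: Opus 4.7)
The plan is to handle the four claims in the order (2), (1), (3), (4), since each subsequent claim leans on the previous ones, and the main engine throughout is the separability (hence Lindelöf property) of $G$, which lets us turn statements about arbitrary open covers into countable ones.

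For (2), I would show the complement is open: if $x\notin\supp\pi$, pick $\epsilon>0$ with $\pi(\mathbb{B}(x,\epsilon))=0$, and observe that every $y\in\mathbb{B}(x,\epsilon/2)$ satisfies $\mathbb{B}(y,\epsilon/2)\subset\mathbb{B}(x,\epsilon)$ by the triangle inequality, so $\pi(\mathbb{B}(y,\epsilon/2))=0$ and $y\notin\supp\pi$. For (3), I would use that the open set $U:=(\supp\pi)^c$ can, by definition, be written as $U=\bigcup_{x\in U}\mathbb{B}(x,\epsilon_x)$ where each $\epsilon_x>0$ is chosen so that $\pi(\mathbb{B}(x,\epsilon_x))=0$. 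Since $(G,d)$ is separable (hence second countable and Lindelöf), this cover admits a countable subcover $\{\mathbb{B}(x_n,\epsilon_{x_n})\}_{n\in\mathbb{N}}$, and then
\begin{align*}
\pi(U)\le\sum_{n\in\mathbb{N}}\pi(\mathbb{B}(x_n,\epsilon_{x_n}))=0
\end{align*}
by countable subadditivity. This gives $\pi((\supp\pi)^c)=0$, and hence $\pi(A)=\pi(A\cap\supp\pi)+\pi(A\cap(\supp\pi)^c)=\pi(A\cap\supp\pi)$ for all $A\in\mathcal{B}(G)$.

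Claim (1) then falls out immediately: if $\supp\pi=\emptyset$, part (3) yields $\pi(G)=\pi(\emptyset)=0$, contradicting $\pi\neq 0$. For (4), suppose $S\subset G$ is closed with $\pi(A\cap S)=\pi(A)$ for all $A\in\mathcal{B}(G)$; taking $A=S^c$ gives $\pi(S^c)=0$. For any $x\in S^c$, openness of $S^c$ produces $\epsilon>0$ with $\mathbb{B}(x,\epsilon)\subset S^c$, whence $\pi(\mathbb{B}(x,\epsilon))\le\pi(S^c)=0$, so $x\notin\supp\pi$. Contraposition yields $\supp\pi\subset S$.

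The only step with any subtlety is (3), and specifically the passage from the arbitrary open cover of $U$ to a countable one; the rest is bookkeeping with the triangle inequality and monotonicity of measures. In a non-separable metric space this step genuinely fails (the support can be a proper subset whose complement has positive outer measure of the cover), so invoking separability of $G$ explicitly is essential — this is the one place where the hypothesis that $G$ is Polish really enters.
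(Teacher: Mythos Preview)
Your proof is correct, and in several places more economical than the paper's. The paper proves the four items in the given order $(1),(2),(3),(4)$ and treats them more independently. For (1) the paper gives a direct construction: starting from any ball of positive measure, it iteratively refines via countable covers to produce a Cauchy sequence whose limit lies in $\supp\pi$; your route, deducing (1) from (3), bypasses this entirely. For (2) the paper argues via sequences in $\supp\pi$ rather than openness of the complement, which is equivalent. For (3) the paper picks a countable dense subset $\{x_n\}$ of $(\supp\pi)^c$, takes balls $\mathbb{B}(x_n,\epsilon_n)\subset(\supp\pi)^c$, and then has to argue separately (via another nested-cover argument) that each such ball is $\pi$-null; your use of Lindel\"of with balls that are null \emph{by definition} of the complement avoids this detour. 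For (4) the paper shows directly that every $x\in\supp\pi$ is a limit of points in $S$; your contrapositive is shorter. Both proofs rely on separability at exactly the same point (reducing to countably many null balls in (3)), so your closing remark is apt; the Polish hypothesis (completeness) is used only in the paper's direct proof of (1), and your reorganization shows it is not actually needed there either.
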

\begin{proof}
  \begin{enumerate}
  \item If $\pi(G)>0$, then due to separability one could find for any
    $\epsilon_{1} >0$ a countable cover of $G$ with balls
    with radius $\epsilon_{1}$, where at least one needs to have nonzero
    measure, because $0<\pi(G) \le \sum_{n} \pi(\mathbb{B}(x_{n}
    ,\epsilon))$. Now just consider $B_{1}:= \mathbb{B}(x_{N},\epsilon_{1})$ such that
    $\pi(B_{1})>0$ and apply the above procedure of countable covers
    with $\epsilon_{2}<\epsilon_{1}$ iteratively, then
    there is a sequence $\epsilon_{n} \to 0$ and $\mathbb{B}(x_{n+1},\epsilon_{n+1}) \subset
    \mathbb{B}(x_{n}, \epsilon_{n})$, such that $x_{n} \to x$, i.e. $x
    \in \supp \pi$.
  \item Let $(x_{n})_{n \in \mathbb{N}} \subset \supp \pi$ with $x_{n}
    \to x$ as $n \to \infty$. Let $\epsilon>0$ and $N >0$ such
    that $d(x_{n},x) < \epsilon$ for all $n \ge N$. Then $x_{n} \in
    \mathbb{B}(x,\epsilon)$ and $\exists \tilde \epsilon >0$ with
    $\mathbb{B}(x_{n}, \tilde\epsilon) \subset \mathbb{B}(x,
    \epsilon)$, so we get
    \begin{align*}
      \pi(\mathbb{B}(x, \epsilon)) \ge \pi(\mathbb{B}(x_{n},\tilde
      \epsilon)) > 0,
    \end{align*}
    i.e. $x \in \supp \pi$.
  \item Write $S=(\supp\pi)^{c}$. Choose $\{x_{n}\}_{n \in \mathbb{N}} \subset S$
    dense. By openness of $S$ there exists
    $\epsilon_{n}>0$ with $\mathbb{B}(x_{n}, \epsilon_{n}) \subset
    S$, hence $S = \bigcup_{n \in \mathbb{N}}
    \mathbb{B}(x_{n}, \epsilon_{n})$ and
    \begin{align*}
      \pi(S) \le \sum_{i\in\mathbb{N}}
      \pi(\mathbb{B}(x_{n},\epsilon_{n})) = 0.
    \end{align*}
    (It holds $\pi(\mathbb{B}(x_{n},\epsilon_{n}))=0$, because
    otherwise, one could find for any small enough $\epsilon >0$ a
    countable cover of $\mathbb{B}(x_{n},\epsilon_{n})$ with balls
    with radius $\epsilon$, where at least one needs to have nonzero
    measure. Since this holds for all $\epsilon$, there is a
    contradiction to $\mathbb{B}(x_{n},\epsilon_{n}) \subset S$.)
  \item Let $x\in \supp \pi$. So $\pi(\mathbb{B}(x,\epsilon) \cap S) >
    0$ for all $\epsilon >0$, i.e.\ $\mathbb{B}(x,\epsilon)\cap S \neq
    \emptyset$ for all $\epsilon>0$. Let $x_{n}$ be such that $x_{n} \in
    \mathbb{B}(x,\epsilon_{n}) \cap S$, where $\epsilon_{n} \to 0$ as
    $n \to \infty$. Then by closedness of $S$, $x_{n} \to x \in S$.
  \end{enumerate}
\end{proof}

\begin{thm}[construction of an invariant
  measure] \label{thm:construction_inv_meas} Let $(\mu
  \mathcal{P}^{n})_{n \in \mathbb{N}}$ be a tight family of
  probability measures on a Polish space $(G,d)$, i.e.\ for any
  $\epsilon>0$ there exists $K_{\epsilon} \subset G$ compact with
  $(\mu \mathcal{P}^{n})(G\setminus K_{\epsilon}) < \epsilon$ for all
  $n \in \mathbb{N}$, where $\mu \in \mathscr{P}(G)$ and $\mathcal{P}$
  is the Markov operator for a given transition kernel $p$, which is
  assumed to be Feller. Then any clusterpoint of $(\nu_{n})$ where
  $\nu_{n} = \tfrac{1}{n} \sum_{i=1}^{n} \mu\mathcal{P}^{i}$ is an
  invariant measure for $\mathcal{P}$.
\end{thm}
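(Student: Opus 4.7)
The plan is to carry out the standard Krylov--Bogolyubov construction. First I would note that tightness of $(\mu\mathcal{P}^n)_n$ immediately transfers to $(\nu_n)_n$: for the same compact $K_\epsilon$, one has
\begin{align*}
\nu_n(G\setminus K_\epsilon) = \frac{1}{n}\sum_{i=1}^n (\mu\mathcal{P}^i)(G\setminus K_\epsilon) < \epsilon
\end{align*}
uniformly in $n$. By Prokhorov's theorem, which is applicable because $(G,d)$ is Polish, the family $(\nu_n)$ is relatively compact in the weak topology on $\mathscr{P}(G)$, so cluster points $\pi$ exist and are themselves probability measures.

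Fix a cluster point $\pi$ with a subsequence $\nu_{n_k}\wto \pi$. The goal is to verify $\pi\mathcal{P}=\pi$, equivalently $\pi(\mathcal{P}f)=\pi f$ for all $f\in C_b(G)$. The Feller assumption is what makes this clean: it guarantees $\mathcal{P}f\in C_b(G)$, so weak convergence of $\nu_{n_k}$ can be tested against $\mathcal{P}f$ just as well as against $f$. This is the only place the Feller hypothesis enters.

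Next I would exploit a telescoping identity for the Ces\`aro average. Writing $\nu_n\mathcal{P}f = \tfrac{1}{n}\sum_{i=1}^n \mu\mathcal{P}^{i+1}f$ and re-indexing gives
\begin{align*}
\nu_n \mathcal{P}f - \nu_n f \;=\; \frac{1}{n}\bigl(\mu\mathcal{P}^{n+1}f - \mu\mathcal{P}f\bigr),
\end{align*}
whose absolute value is bounded by $\tfrac{2\|f\|_\infty}{n}$ and hence tends to $0$ as $n\to\infty$. Passing to the limit along $n_k$ using weak convergence on both sides then yields $\pi(\mathcal{P}f)=\pi f$ for every $f\in C_b(G)$, which by the usual separating-family argument for probability measures on a Polish space gives $\pi\mathcal{P}=\pi$.

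I do not anticipate a real obstacle here; the argument is textbook once the Feller and tightness hypotheses are in place. If there is any subtlety, it is bookkeeping in the telescoping step and making sure one applies weak convergence to the bounded continuous function $\mathcal{P}f$ (which is where Feller is essential), rather than to an arbitrary measurable $f$. Everything else --- Prokhorov, the $\tfrac{1}{n}$ decay of the boundary terms, and the characterization of invariance by integration against $C_b(G)$ --- is standard.
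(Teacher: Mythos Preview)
Your proposal is correct and matches the paper's proof essentially line for line: tightness transfers to the Ces\`aro averages, Prokhorov yields a weakly convergent subsequence, Feller lets you test weak convergence against $\mathcal{P}f$, and the same telescoping identity $\nu_n\mathcal{P}f-\nu_n f=\tfrac{1}{n}(\mu\mathcal{P}^{n+1}f-\mu\mathcal{P}f)$ with the bound $\tfrac{2\|f\|_\infty}{n}$ finishes it. The paper even cites the same Krylov--Bogolyubov result (Hairer's notes) that you allude to.
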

\begin{proof}
  This is basically \cite[Theorem 1.10]{Hairer2016}. The tightness of
  $(\mu \mathcal{P}^{n})$ implies tightness of $(\nu_{n})$ and
  therefore there exists a weakly converging subsequence
  $(\nu_{n_{k}})$ with limit $\pi \in \mathscr{P}(G)$ by Prokhorov's
  Theorem. By the Feller property of $\mathcal{P}$ one has for any
  continuous and bounded $\mymap{f}{G}{\mathbb{R}}$ that also
  $\mathcal{P}f$ is continuous and bounded and hence
  \begin{align*}
    \abs{(\pi \mathcal{P})f - \pi  f} &= \abs{\pi (\mathcal{P}f) - \pi
      f}  \\ &= \lim_{k} \abs{
    \nu_{n_{k}}(\mathcal{P} f)- \nu_{n_{k}} f} \\ &= \lim_{k}
  \frac{1}{n_{k}} \abs{ \mu\mathcal{P}^{n_{k}+1}f -
    \mu\mathcal{P}f} \\ &\le \lim_{k}
  \frac{2\norm{f}_{\infty}}{n_{k}} \\ &=0 .
  \end{align*}
\end{proof}

\begin{thm}[conditional expectation - basics, see Theorem 5.1 in
  \cite{kallenberg1997}]\label{thm:cond_exp_basics}
  Let $(\Omega,\mathcal{F},\mathbb{P})$ be a probability space and $X$
  a real-valued random variable with $\mathbb{E}\abs{X} < \infty$
  (integrable). Let $\mathcal{F}_{0} \subset \mathcal{F}$ a
  sub-$\sigma$-algebra. Then it exists an a.s.\ unique
  $\mathcal{F}_{0}$-mb.\ random variable $Z \equiv
  \cex{X}{\mathcal{F}_0}$ with $\mathbb{E}(Z\mathds{1}_{A}) =
  \mathbb{E}(X\mathds{1}_{A})$ for all $A \in \mathcal{F}_{0}$.\\
  Let $Y,X_{n}$ be integrable random variables. Further properties
  are:
  \begin{enumerate}
  \item $\mathbb{E}(\cex{X}{\mathcal{F}_{0}}) = \mathbb{E}X$.
  \item $X$ is $\mathcal{F}_{0}$-mb, then $\cex{X}{\mathcal{F}_{0}} =
    X$ a.s.
  \item $X$ independent of $\mathcal{F}_{0}$, then
    $\cex{X}{\mathcal{F}_{0}} = \mathbb{E} X$ a.s.
  \item\label{item:cex_basics:linear} $\cex{aX + b Y}{\mathcal{F}_{0}}
    = a\cex{X}{\mathcal{F}_{0}} + b \cex{Y}{\mathcal{F}_{0}}$ a.s.\
    for all $a,b\in \mathbb{R}$.
  \item\label{item:cex_basics:monotone} $X \le Y$, then
    $\cex{X}{\mathcal{F}_{0}} \le \cex{Y}{\mathcal{F}_{0}}$ a.s.
  \item $0\le X_{n} \uparrow X$, then $\cex{X_{n}}{\mathcal{F}_{0}}
    \uparrow \cex{X}{\mathcal{F}_{0}}$ a.s.
  \item $\mathcal{F}_{0}\subset\mathcal{F}_{1}\subset\mathcal{F}$ with
    $\sigma$-algebra $\mathcal{F}_{1}$, then
    $\cex{\cex{X}{\mathcal{F}_{1}}}{\mathcal{F}_{0}} = \cex{X}{\mathcal{F}_{0}}$.
  \end{enumerate}
\end{thm}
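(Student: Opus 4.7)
The plan is to establish existence and a.s.\ uniqueness of the conditional expectation first, and then derive each of the seven enumerated properties directly from the defining characterization $\mathbb{E}(Z\1_A)=\mathbb{E}(X\1_A)$ for all $A\in\mathcal{F}_{0}$. For existence I would invoke the Radon-Nikodym theorem: define a finite signed measure $\nu$ on $(\Omega,\mathcal{F}_0)$ by $\nu(A):=\mathbb{E}(X\1_A)$; this is well defined because $X$ is integrable, $\sigma$-additive by dominated convergence, and absolutely continuous with respect to $\mathbb{P}|_{\mathcal{F}_0}$ (if $\mathbb{P}(A)=0$ then $X\1_A=0$ a.s.). The Radon-Nikodym density is then an a.s.\ unique $\mathcal{F}_0$-measurable random variable $Z$ satisfying the defining identity. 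Uniqueness up to a $\mathbb{P}$-nullset follows from the standard argument: if two candidates $Z,Z'$ both worked, then on $A:=\{Z>Z'\}\in\mathcal{F}_0$ one has $\mathbb{E}((Z-Z')\1_A)=0$, forcing $\mathbb{P}(A)=0$, and symmetrically.

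The seven listed properties then follow mechanically from the defining equation. For property 1, take $A=\Omega$. For property 2, $X$ itself is a legitimate version of $Z$. For property 3, independence gives $\mathbb{E}(X\1_A)=\mathbb{E}X\cdot\mathbb{P}(A)=\mathbb{E}(\mathbb{E}X\cdot\1_A)$, so the constant $\mathbb{E}X$ works. Linearity (property 4) is inherited from linearity of expectation: $a\cex{X}{\mathcal{F}_0}+b\cex{Y}{\mathcal{F}_0}$ is $\mathcal{F}_0$-measurable and satisfies the defining property for $aX+bY$. For monotonicity (property 5), set $A:=\{\cex{X}{\mathcal{F}_0}>\cex{Y}{\mathcal{F}_0}\}\in\mathcal{F}_0$; the defining identity yields $\mathbb{E}((\cex{X}{\mathcal{F}_0}-\cex{Y}{\mathcal{F}_0})\1_A)=\mathbb{E}((X-Y)\1_A)\le 0$, while the integrand on the left is nonnegative on $A$, hence $\mathbb{P}(A)=0$.

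For monotone convergence (property 6), note that a.s.\ monotonicity of the sequence $\cex{X_n}{\mathcal{F}_0}$ is guaranteed by property 5, so the pointwise (a.s.) limit $Z_\infty$ exists and is $\mathcal{F}_0$-measurable. Applying the classical monotone convergence theorem to both sides of $\mathbb{E}(\cex{X_n}{\mathcal{F}_0}\1_A)=\mathbb{E}(X_n\1_A)$ identifies $Z_\infty$ as a version of $\cex{X}{\mathcal{F}_0}$ by uniqueness. Finally, the tower property (7) is verified by observing that $\cex{X}{\mathcal{F}_0}$ is $\mathcal{F}_0$-measurable, and for any $A\in\mathcal{F}_0\subset\mathcal{F}_1$,
\[
  \mathbb{E}\!\left(\cex{X}{\mathcal{F}_0}\1_A\right)=\mathbb{E}(X\1_A)=\mathbb{E}\!\left(\cex{X}{\mathcal{F}_1}\1_A\right),
\]
so by uniqueness $\cex{X}{\mathcal{F}_0}$ coincides a.s.\ with $\cex{\cex{X}{\mathcal{F}_1}}{\mathcal{F}_0}$.

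The only genuine obstacle in this program is the Radon-Nikodym theorem itself, which is the nontrivial input from abstract measure theory; the remaining properties are essentially algebraic rewordings of the defining identity combined with standard monotone/dominated convergence. Since the present statement is explicitly cited as \cite[Theorem 5.1]{kallenberg1997}, in practice one simply references the book rather than reproducing this classical development.
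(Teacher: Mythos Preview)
Your proposal is correct and is the standard textbook argument; there is nothing to object to mathematically. Note, however, that the paper does not give any proof of this statement at all---it is listed in the appendix purely as a reference result, with the citation to \cite[Theorem~5.1]{kallenberg1997} serving in lieu of proof. So there is no ``paper's own proof'' to compare against: you have supplied the classical Radon--Nikodym derivation that the paper simply outsources, and your final sentence already recognizes this.
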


\begin{lemma}[Satz 17.11 in \cite{bauer1992}]\label{lemma:sigFintie}
  Let $(\Omega,\mathcal{F})$ be a measurable space and $\mu$ be
  $\sigma$-finite. Let $\mymap{f}{\Omega}{[0,\infty]}$ and set $\nu =
  f \cdot \mu$ (i.e.\ $\nu(A) = \int_{A} f \dd{\mu}$ for $A \in
  \mathcal{F}$). Then $f$ is $\mu$-a.s.\ unique. Furthermore, $\nu$ is
  $\sigma$-finite (i.e.\ there exists $(\Omega_{n})_{n \in \mathbb{N}}
  \subset \mathcal{F}$ with $\nu(\Omega_{n})< \infty$ and
  $\Omega_{n}\uparrow \Omega$) if and only if $f$ is real-valued
  $\mu$-a.s.
\end{lemma}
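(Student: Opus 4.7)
The plan is to prove the two parts of the lemma separately, exploiting $\sigma$-finiteness of $\mu$ throughout by fixing an exhaustion $(\Omega_n)_{n\in\mathbb{N}}\subset\mathcal{F}$ with $\Omega_n\uparrow\Omega$ and $\mu(\Omega_n)<\infty$.

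For the $\mu$-a.s.\ uniqueness of $f$, I would suppose $f,g\colon\Omega\to[0,\infty]$ both satisfy $\int_A f\,d\mu=\int_A g\,d\mu=\nu(A)$ for every $A\in\mathcal{F}$, and show that $\mu(\{f\neq g\})=0$. By symmetry it suffices to show $\mu(\{f>g\})=0$. Decompose
\[
\{f>g\}=\{f>g,\,g<\infty\}\cup\{f=g=\infty\},
\]
where the second set is irrelevant because $f=g$ there. For the first set I would introduce the truncation sets
\[
A_{n,k,m}:=\Omega_n\cap\{f\ge g+1/k\}\cap\{g\le m\}\qquad(n,k,m\in\mathbb{N}),
\]
whose union (over $n,k,m$) equals $\{f>g,\,g<\infty\}$. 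On $A_{n,k,m}$ one has $0\le g\le m$ and hence $\int_{A_{n,k,m}} g\,d\mu\le m\,\mu(\Omega_n)<\infty$, so subtracting the two integrals of $f$ and $g$ over $A_{n,k,m}$ is well-defined and yields $\int_{A_{n,k,m}}(f-g)\,d\mu=0$; since $f-g\ge 1/k>0$ on $A_{n,k,m}$, this forces $\mu(A_{n,k,m})=0$. Countable union gives the claim.

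For the equivalence, the easy implication is that if $\nu$ is $\sigma$-finite, say $\Omega=\bigcup_n E_n$ with $\nu(E_n)<\infty$, then $\int_{E_n} f\,d\mu<\infty$ forces $f<\infty$ $\mu$-a.e.\ on each $E_n$, and hence $\mu$-a.e.\ on $\Omega$. Conversely, if $\{f=\infty\}$ is a $\mu$-null set $N$, I would set
\[
B_{n,m}:=\Omega_n\cap\{f\le m\}\qquad(n,m\in\mathbb{N}),
\]
so that $\nu(B_{n,m})\le m\,\mu(\Omega_n)<\infty$ and $\bigcup_{n,m}B_{n,m}=\{f<\infty\}=\Omega\setminus N$. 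Since $\nu(N)=\int_N f\,d\mu=0$ (as $\mu(N)=0$), one can enlarge $B_{1,1}$ to $B_{1,1}\cup N$ without destroying finiteness, obtaining an exhaustion of $\Omega$ by sets of finite $\nu$-measure.

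No step is particularly delicate; if there is a mild subtlety it lies in the uniqueness argument, where one must avoid forming the formal difference $f-g$ on sets where either value can be $+\infty$. The truncation by $\{g\le m\}$ above is precisely what removes this ambiguity, and the $\sigma$-finiteness of $\mu$ is what ensures that after this truncation all relevant integrals are finite and can be manipulated algebraically.
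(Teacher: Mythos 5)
Your proof is correct, but note that the paper does not actually prove this lemma: it is stated as Satz~17.11 of Bauer's textbook \cite{bauer1992} and simply cited, so there is no in-paper argument to compare against. Your self-contained argument is the standard one and all the delicate points are handled properly: the uniqueness part correctly reduces to sets where $g$ is truncated and $\mu$ has finite mass so that the difference $\int f - \int g$ is a legitimate finite subtraction, and the equivalence part correctly absorbs the $\mu$-null set $\{f=\infty\}$ (which is automatically $\nu$-null) into the exhaustion. One cosmetic slip: the displayed decomposition $\{f>g\}=\{f>g,\,g<\infty\}\cup\{f=g=\infty\}$ is not a set equality, since the second set is disjoint from $\{f>g\}$; what you actually use, and what is true, is simply $\{f>g\}=\{f>g,\,g<\infty\}$, because $f>g$ already forces $g<\infty$. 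This does not affect the argument. A second, equally minor point: at the very end you produce a countable cover of $\Omega$ by sets of finite $\nu$-measure rather than an increasing sequence $\Omega_n\uparrow\Omega$ as in the statement; passing to finite unions of the cover fixes this in one line and is worth saying explicitly since the lemma's parenthetical definition of $\sigma$-finiteness demands the increasing form.
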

\begin{rem}\label{rem:NonnegVarIsSigFin}
  A nonnegative real-valued random variable $X$ on a probability space
  $(\Omega, \mathcal{F}, \mathbb{P})$ induces a $\sigma$-finite measure
  $\nu = X \cdot \mathbb{P}$.
\end{rem}

\begin{thm}[conditional expectation - extension]\label{thm:cond_expec}
  Let $(\Omega,\mathcal{F},\mathbb{P})$ be a probability space and
  $X\ge 0$ be a real-valued random variable. Let $\mathcal{F}_{0}
  \subset \mathcal{F}$ be a sub-$\sigma$-algebra. Then it exists an
  a.s.\ unique nonnegative real-valued random variable $Z \equiv
  \cex{X}{\mathcal{F}_0}$ on $(\Omega,\mathcal{F}_{0})$ with
  $\mathbb{E}(Z\mathds{1}_{A}) = \mathbb{E}(X\mathds{1}_{A})$ for all
  $A \in \mathcal{F}_{0}$.\\
  Let additionally $Y,(X_{n})$ be nonnegative and real-valued, then all items 1.-7.\ in
  \cref{thm:cond_exp_basics} are satisfied for these.
\end{thm}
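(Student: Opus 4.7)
My plan is to prove existence, uniqueness, and the extension of properties 1--7 by the standard truncation and monotone-convergence argument, bootstrapping from the integrable case already provided in \cref{thm:cond_exp_basics}.

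For existence, I would truncate: set $X_n \equiv \min(X,n)$, which is bounded, hence integrable. By \cref{thm:cond_exp_basics} there exists an a.s.\ unique $\mathcal{F}_0$-measurable $Z_n \equiv \cex{X_n}{\mathcal{F}_0}$. Since $X_n \le X_{n+1}$, item~\ref{item:cex_basics:monotone} of \cref{thm:cond_exp_basics} yields $Z_n \le Z_{n+1}$ outside a common $\mathbb{P}$-nullset (absorbing countably many nullsets into one). On the complement of this nullset define
\begin{align*}
  Z(\omega) \equiv \lim_{n\to\infty} Z_n(\omega) \in [0,\infty],
\end{align*}
extended arbitrarily on the nullset; then $Z$ is $\mathcal{F}_0$-measurable. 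For $A \in \mathcal{F}_0$, applying the classical monotone convergence theorem to both sides of $\mathbb{E}(Z_n \mathds{1}_A) = \mathbb{E}(X_n \mathds{1}_A)$ yields $\mathbb{E}(Z \mathds{1}_A) = \mathbb{E}(X \mathds{1}_A)$, which is the defining property.

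For uniqueness, suppose $Z'$ is another $\mathcal{F}_0$-measurable $[0,\infty]$-valued random variable with the defining property. Then $Z \cdot \mathbb{P}$ and $Z' \cdot \mathbb{P}$ agree on $\mathcal{F}_0$ and both equal (the restriction to $\mathcal{F}_0$ of) $X \cdot \mathbb{P}$, which is $\sigma$-finite by \cref{rem:NonnegVarIsSigFin}. Applying \cref{lemma:sigFintie} to the $\sigma$-finite measure $X\cdot\mathbb{P}|_{\mathcal{F}_0}$ gives $Z = Z'$ $\mathbb{P}$-a.s.\ (alternatively, one can argue directly by testing against the sets $A_n \equiv \{Z \le n\} \cap \{Z' \le n\} \cap \{Z > Z'\} \in \mathcal{F}_0$, concluding $\mathbb{P}(A_n) = 0$ and taking the union over $n$, then symmetrizing).

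For the extension of items 1--7 of \cref{thm:cond_exp_basics} to nonnegative $X,Y,(X_n)$, I would in each case truncate by $\min(\cdot,m)$, invoke the corresponding integrable statement, and pass to the limit $m\to\infty$ using the monotone convergence of conditional expectations established above; linearity (item~\ref{item:cex_basics:linear}) requires $a,b\ge 0$ in this nonnegative setting, and monotonicity (item~\ref{item:cex_basics:monotone}) transfers directly. The main subtlety is that $Z$ can legitimately take the value $+\infty$ on an $\mathcal{F}_0$-measurable set of positive measure (e.g.\ when $X$ has infinite expectation and $\mathcal{F}_0$ is too coarse to resolve this), so throughout the argument the key point is that $\sigma$-finiteness of $X\cdot\mathbb{P}$ is what enables both the construction via truncation and the uniqueness statement via \cref{lemma:sigFintie}; this is the only step that genuinely goes beyond a routine repetition of the integrable case.
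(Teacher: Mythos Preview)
Your argument is correct and takes a genuinely different route from the paper. The paper localizes in the \emph{domain}: it invokes \cref{rem:NonnegVarIsSigFin} to obtain a disjoint partition $\Omega = \bigcup_n \Omega_n$ with $\Omega_n \in \mathcal{F}_0$ and $\int_{\Omega_n} X\,\dd{\mathbb{P}} < \infty$, defines the conditional expectation piecewise via $\cex{X \1_{\Omega_n}}{\mathcal{F}_0}$ from the integrable case, and sums. You instead localize in the \emph{range} by truncating $X_n = \min(X,n)$ and passing to the monotone limit. Your route is the more common textbook construction and is arguably more robust: the paper's partition must lie in $\mathcal{F}_0$, which \cref{rem:NonnegVarIsSigFin} does not directly supply (the natural sets $\{n-1 \le X < n\}$ lie only in $\sigma(X)$), whereas your truncation needs no such hypothesis. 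Conversely, once the paper's decomposition is granted, the transfer of items 1.--7.\ is completely mechanical on each $\Omega_n$.

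One small remark on your uniqueness argument: \cref{lemma:sigFintie} requires the \emph{base} measure $\mathbb{P}|_{\mathcal{F}_0}$ to be $\sigma$-finite (which it is, trivially), not $X\cdot\mathbb{P}|_{\mathcal{F}_0}$; the latter need not be $\sigma$-finite when $\mathcal{F}_0$ is coarse and $\mathbb{E}[X]=\infty$. Your alternative direct argument via the sets $A_n$ is clean and sidesteps this. Your closing observation that $Z$ may take the value $+\infty$ on a set of positive measure is correct and worth flagging: it is in tension with the ``real-valued'' phrasing of the statement, a point the paper's own proof does not resolve either.
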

\begin{proof}
  From \cref{rem:NonnegVarIsSigFin} follows the existence of disjoint
  sets $\Omega_{n} \in \mathcal{F}_{0}$ with $\bigcup_{n} \Omega_{n} =
  \Omega$ and the property that $\int_{\Omega_{n}} X \dd{\mathbb{P}} <
  \infty$. One has that a.s.
  \begin{align*}
    \1_{\Omega_{n}} \cex{X}{\mathcal{F}_{0}\cap \Omega_{n}}= \cex{ X
      \1_{\Omega_{n}}}{\mathcal{F}_{0} \cap \Omega_{n}}
    =\cex{X}{\mathcal{F}_{0}\cap \Omega_{n}} = \cex{X
      \1_{\Omega_{n}}}{\mathbb{F}_{0}}.
  \end{align*}
  Define $Z := \sum_{n} \cex{X}{\mathcal{F}_{0}\cap \Omega_{n}}$, then
  $Z = \cex{X}{\mathcal{F}_{0}}$. The items 1.-7.\ follow now from
  \cref{thm:cond_exp_basics} on $\Omega_{n}$ and the Monotone
  Convergence Theorem.
\end{proof}

\begin{thm}[disintegration]\label{thm:disinteg}
  Let $(\Omega,\mathcal{F},\mathbb{P})$ be a probability space and
  $(\Omega_{1},\mathcal{F}_{1})$, $(\Omega_{2}, \mathcal{F}_{2})$ be
  measurable spaces and $\mathcal{F}_{0} \subset \mathcal{F}$ a
  sub-$\sigma$-algebra, let $X_{1} \in \Omega_{1}$ have a regular
  version $\mu$ of $\cpr{X_{1} \in \cdot}{\mathcal{F}_{0}}$ and let
  $X_{2} \in \Omega_{2}$ be $\mathcal{F}_{0}$-mb. Let furthermore
  $\mymap{f}{\Omega_{1} \times \Omega_{2}}{\mathbb{R}_{+}}$ be
  measurable. Then
  \begin{align*}
    \cex{f(X_{1},X_{2})}{\mathcal{F}_{0}} = \int f(x_{1},X_{2})
    \mu(\dd{x_{1}}) \quad \text{a.s.}
  \end{align*}
\end{thm}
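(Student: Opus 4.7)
The plan is to invoke the standard measure-theoretic machinery (the monotone class / standard machine argument), building up the identity from rectangle indicators to general nonnegative measurable $f$, exploiting the fact that $X_{2}$ being $\mathcal{F}_{0}$-measurable lets us treat it as a ``constant'' inside conditional expectations.

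First I would handle the base case $f = \1_{A_{1} \times A_{2}}$ with $A_{1} \in \mathcal{F}_{1}$, $A_{2} \in \mathcal{F}_{2}$. Since $X_{2}$ is $\mathcal{F}_{0}$-measurable, so is $\1_{A_{2}}(X_{2})$, and the ``taking out what is known'' property of conditional expectation (\cref{thm:cond_expec}) gives
\[
\cex{\1_{A_{1}}(X_{1})\1_{A_{2}}(X_{2})}{\mathcal{F}_{0}} = \1_{A_{2}}(X_{2})\,\cex{\1_{A_{1}}(X_{1})}{\mathcal{F}_{0}} = \1_{A_{2}}(X_{2})\,\mu(A_{1}) \quad \text{a.s.},
\]
where the last equality is the defining property of the regular version $\mu$. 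On the other side, $\int \1_{A_{1}\times A_{2}}(x_{1},X_{2})\,\mu(\dd{x_{1}}) = \1_{A_{2}}(X_{2})\,\mu(A_{1})$, so the two sides coincide on rectangles.

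Next I would promote this to indicators of arbitrary $M \in \mathcal{F}_{1}\otimes\mathcal{F}_{2}$ via a Dynkin ($\pi$-$\lambda$) argument: let $\mathcal{D}$ be the collection of such $M$ for which the identity holds a.s.; the rectangles form a $\pi$-system generating $\mathcal{F}_{1}\otimes\mathcal{F}_{2}$ and lie in $\mathcal{D}$, while $\mathcal{D}$ is closed under complements (trivial) and countable disjoint unions (by linearity, \cref{thm:cond_expec}(iv), and monotone convergence, \cref{thm:cond_expec}(vi), combined with monotone convergence for the inner $\mu$-integral). Linearity over nonnegative simple functions and a final monotone convergence step, applied to any monotone sequence of simple functions increasing to $f$, then yield the identity for all measurable $f \ge 0$.

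The hard part will be the measurability bookkeeping on the right-hand side: one needs $\omega \mapsto \int f(x_{1},X_{2}(\omega))\,\mu(\omega,\dd{x_{1}})$ to be $\mathcal{F}_{0}$-measurable for the identity to even make sense, and for the conditional expectation $\cex{f(X_{1},X_{2})}{\mathcal{F}_{0}}$ to be compatible with the a.s. uniqueness of \cref{thm:cond_expec}. For rectangle indicators this is immediate, since $\mu(\cdot,A_{1})$ is $\mathcal{F}_{0}$-measurable by definition of a regular conditional distribution and $\1_{A_{2}}(X_{2})$ is $\mathcal{F}_{0}$-measurable by hypothesis; the monotone class chase preserves this property at each stage, and monotone convergence secures it in the limit.
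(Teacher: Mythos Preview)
The paper does not actually supply a proof of \cref{thm:disinteg}; it is stated in the appendix as a known result (the paper repeatedly cites Kallenberg \cite{kallenberg1997}, where this is essentially Theorem~5.4). So there is no ``paper's own proof'' to compare against.

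Your argument is the standard one and is correct. The monotone-class route --- rectangle indicators via ``taking out what is known'' and the defining property of the regular version $\mu$, then a $\pi$-$\lambda$ extension to all of $\mathcal{F}_{1}\otimes\mathcal{F}_{2}$, then linearity and monotone convergence --- is exactly how this is proved in Kallenberg and in most textbooks. Two minor remarks: for the complement step in the Dynkin argument you are implicitly using that $\mu(\omega,\cdot)$ is a \emph{probability} measure (so $\int \1_{M^{c}}(x_{1},X_{2})\,\mu(\dd{x_{1}}) = 1 - \int \1_{M}(x_{1},X_{2})\,\mu(\dd{x_{1}})$), which is part of the definition of a regular conditional distribution; and the ``taking out what is known'' rule you invoke is not among the items listed in the paper's \cref{thm:cond_exp_basics}/\cref{thm:cond_expec}, so strictly speaking you would cite it from Kallenberg directly rather than from those theorems. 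Neither point affects the validity of the argument.
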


\begin{lemma} \label{lemma:supermartingale} Let
  $(\Omega,\mathcal{F},\mathbb{P})$ be probability space. Let
  $(X_{k})_{k \in \mathbb{N}_{0}}$, $(U_{k})_{k\in \mathbb{N}_{0}}$ be
  sequences of nonnegative real-valued random variables with $X_{k}
  \in \mathcal{F}_{k}$, where $\mathcal{F}_{0} \subset \mathcal{F}_{1}
  \subset \ldots \subset \mathcal{F}$ are $\sigma$-algebras. Suppose
  for all $k \in \mathbb{N}_{0}$
  \begin{align*}
    X_{k+1} \le X_{k} - U_{k} \quad a.s.
  \end{align*}
  Define $V_{k}:= \cex{U_{k}}{\mathcal{F}_{k}}$ for $k\in
  \mathbb{N}_{0}$. Then $X_{k} \to X$ a.s.\ and $\sum_{k} U_{k},
  \sum_{k} V_{k} < \infty$ a.s.
\end{lemma}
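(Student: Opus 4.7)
The plan is to exploit the almost-sure monotonicity forced by $X_{k+1}\le X_k - U_k$ with $U_k\ge 0$, and then to set up a nonnegative supermartingale whose limit controls the partial sums of $V_k$.

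First I would observe that $U_k\ge 0$ implies $X_{k+1}\le X_k$ a.s., so $(X_k)$ is a.s.\ non-increasing and bounded below by $0$; hence $X_k\downarrow X$ a.s.\ for some real-valued $X\in[0,X_0]$. Telescoping the inequality $U_k\le X_k-X_{k+1}$ from $k=0$ to $n$ yields $\sum_{k=0}^n U_k\le X_0-X_{n+1}\le X_0<\infty$ a.s., so sending $n\to\infty$ delivers $\sum_k U_k<\infty$ a.s.

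For $\sum_k V_k$ the strategy is to build a nonnegative supermartingale. Applying monotonicity of the extended conditional expectation (\cref{thm:cond_expec}) to $X_{k+1}\le X_k-U_k$, together with the $\mathcal{F}_k$-measurability of $X_k$, gives $\cex{X_{k+1}}{\mathcal{F}_k}\le X_k-V_k$ a.s. I would then introduce
\[
Y_k \equiv X_k + \sum_{j=0}^{k-1} V_j \qquad (k\in\mathbb{N}_0),
\]
with the empty sum equal to $0$. This is nonnegative and $\mathcal{F}_k$-adapted, and satisfies
\[
\cex{Y_{k+1}}{\mathcal{F}_k} = \cex{X_{k+1}}{\mathcal{F}_k} + \sum_{j=0}^{k} V_j \le X_k - V_k + \sum_{j=0}^{k} V_j = Y_k \quad\text{a.s.,}
\]
so $(Y_k)$ is a nonnegative supermartingale.

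The main obstacle is that $X_0$, and hence $Y_0$, need not be integrable, so the classical Doob supermartingale convergence theorem does not apply off the shelf. I would localize on the $\mathcal{F}_0$-events $A_N\equiv\{X_0\le N\}$: since $X_k$ is non-increasing one has $X_k\mathds{1}_{A_N}\le N$, so $Y_k\mathds{1}_{A_N}$ is again a nonnegative supermartingale with $\mathbb{E}[Y_0\mathds{1}_{A_N}]\le N<\infty$. Doob's theorem then yields a.s.\ convergence of $Y_k\mathds{1}_{A_N}$ to an a.s.\ finite limit, hence a.s.\ convergence of $Y_k$ on $A_N$. Since $X_0$ is real-valued, $\bigcup_N A_N=\Omega$, and therefore $Y_k\to Y_\infty$ a.s.\ with $Y_\infty<\infty$ a.s. Combined with the first step, $\sum_{j=0}^{k-1}V_j = Y_k-X_k\to Y_\infty-X<\infty$ a.s., which proves $\sum_k V_k<\infty$ a.s.\ and completes the proof.
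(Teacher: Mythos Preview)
Your argument is correct. The paper does not actually prove this lemma: it simply records that the statement is a special case of the Robbins--Siegmund supermartingale convergence theorem and cites \cite{ROBBINS1971233}. So your route is genuinely different from (and more informative than) what appears in the paper.

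The comparison is worth noting. Robbins--Siegmund works under the weaker hypothesis $\cex{X_{k+1}}{\mathcal{F}_k}\le X_k - V_k$ (plus possible perturbation terms), and from that single conditional inequality obtains both $X_k\to X$ a.s.\ and $\sum_k V_k<\infty$ a.s. You instead exploit the \emph{pointwise} inequality $X_{k+1}\le X_k-U_k$, which is strictly stronger: it gives monotonicity of $(X_k)$ and hence convergence of $X_k$ and summability of $U_k$ without any probabilistic machinery at all. Only the summability of $V_k$ then needs a supermartingale argument, and there your localization on $A_N=\{X_0\le N\}\in\mathcal{F}_0$ cleanly sidesteps the non-integrability of $X_0$ so that Doob's theorem applies. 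What the black-box citation buys is brevity and generality; what your argument buys is a self-contained proof tailored to the stronger hypothesis actually available here, together with the extra conclusion $\sum_k U_k<\infty$ a.s.\ for free.
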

\begin{proof}
  This is a special instance of the more general supermartingale
  convergence theorem in \cite{ROBBINS1971233}.
\end{proof}

\begin{lemma}[further properties of $R$]\label{lemma:further_prop_R}
  Under the standing assumptions, if $T_{i} = P_{i}$ are projectors
  onto nonempty, closed and convex sets, $i \in I$. There holds that:
  \begin{enumerate}
  \item $R$ is convex.
  \item\label{item:prop_R:derivative} $R$ is continuously
    differentiable, $\tfrac{1}{2}\nabla R(x) = x -
    \mathbb{E}[P_{\xi}x]$ for all $x \in \mathcal{H}$.
  \item $\nabla R$ is globally Lipschitz continuous with constant not
    larger than $4$.
  \item $C=\{\nabla R=0\}$.
  \end{enumerate}
\end{lemma}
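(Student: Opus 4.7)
The plan is to treat the four items in order, since each builds on the preceding ones. Throughout, set $f_\omega(x) := \|x - P_{\xi(\omega)}x\|^2 = \dist^2(x, C_{\xi(\omega)})$ so that $R(x) = \int_\Omega f_\omega(x)\,\mathbb{P}(\D{\omega})$. The key elementary facts I will invoke are: (a) for a nonempty closed convex set $K \subset \mathcal{H}$ the function $\dist^2(\cdot,K)$ is convex, $C^1$ with $\nabla \tfrac{1}{2}\dist^2(\cdot,K)(x) = x - P_K x$, and $I - P_K$ is firmly nonexpansive, hence $1$-Lipschitz (\cite[Corollary 12.30, Proposition 4.8]{Bauschke2011}); (b) the bound $\|y - P_{\xi}y\| \leq 2\|y-c\|$ for any fixed $c\in C$ a.s., already used in the proof of \cref{lemma:prop_R}.

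For (1), each $f_\omega$ is convex by (a), so $R$, being an expectation of convex functions, is convex. For (2), the plan is to interchange $\mathbb{E}$ and $\nabla$ via a Taylor-type estimate. The gradient $\nabla f_\omega(x) = 2(x - P_{\xi(\omega)}x)$ is $2$-Lipschitz in $x$ (because $I - P_{\xi(\omega)}$ is $1$-Lipschitz), which yields the descent-lemma bound
\[
|f_\omega(x+h) - f_\omega(x) - \langle \nabla f_\omega(x), h\rangle| \leq \|h\|^2 \quad \text{for all }\omega.
\]
Using (b), the map $\omega \mapsto \nabla f_\omega(x) = 2(x - P_{\xi(\omega)}x)$ is Bochner integrable, and its integral equals $2(x - \mathbb{E}[P_\xi x])$ (measurability follows from \nameref{ass:1}\ref{item:ass1:Phi}). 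Integrating the displayed inequality over $\Omega$ gives
\[
|R(x+h) - R(x) - \langle 2(x - \mathbb{E}[P_\xi x]), h \rangle| \leq \|h\|^2,
\]
which proves Fréchet differentiability with the claimed gradient; continuity of $\nabla R$ then follows from item (3).

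For (3), using the 1-Lipschitz property of $P_{\xi(\omega)}$ pointwise together with Jensen's inequality,
\[
\left\|\tfrac{1}{2}\nabla R(x) - \tfrac{1}{2}\nabla R(y)\right\| = \|(x-y) - \mathbb{E}[P_\xi x - P_\xi y]\| \leq \|x-y\| + \mathbb{E}\|P_\xi x - P_\xi y\| \leq 2\|x-y\|,
\]
so $\nabla R$ is $4$-Lipschitz. For (4), $R \ge 0$ is convex and $C^1$, hence $\nabla R(x) = 0$ iff $x$ minimizes $R$ iff $R(x) = 0$; and \cref{lemma:prop_R}\ref{item:prop_R:C} identifies $R^{-1}(0)$ with $C$.

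The only step that requires care is (2), since one must justify passing the derivative under the integral in a setting where no global Lipschitz bound on $x \mapsto f_\omega(x)$ is available; the uniform-in-$\omega$ remainder bound coming from the $2$-Lipschitz continuity of $\nabla f_\omega$ is exactly what circumvents this, and Bochner integrability of $\omega \mapsto \nabla f_\omega(x)$ is ensured by the bound (b). Once (2) is in hand, items (1), (3), (4) are essentially bookkeeping.
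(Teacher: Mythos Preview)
Your proof is correct. Items (1), (3), and (4) are essentially identical to the paper's arguments. For item (2), however, you take a genuinely different route: the paper fixes a sequence $y_n\to 0$, forms the difference quotients
\[
f_n(\omega)=\frac{\bigl|\dist^2(x+y_n,C_{\xi(\omega)})-\dist^2(x,C_{\xi(\omega)})-2\langle x-P_{\xi(\omega)}x,\,y_n\rangle\bigr|}{\|y_n\|},
\]
shows $f_n\to 0$ pointwise, produces an integrable dominating function $g(\omega)=4\epsilon+2\dist(x,C_{\xi(\omega)})$ via an explicit algebraic expansion, and then invokes Lebesgue's dominated convergence theorem. Your approach instead exploits that $\nabla f_\omega$ is $2$-Lipschitz \emph{uniformly in $\omega$}, so the descent-lemma remainder bound $|f_\omega(x+h)-f_\omega(x)-\langle\nabla f_\omega(x),h\rangle|\le\|h\|^2$ holds for every $\omega$ and integrates directly, bypassing DCT and sequence arguments altogether. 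This is cleaner and more transparent; the paper's version has the minor advantage of not explicitly invoking Bochner integrability (it only integrates the scalar $\langle x-P_\xi x,\,y_n\rangle$), whereas you need to identify $\int_\Omega\nabla f_\omega(x)\,\mathbb{P}(\dd\omega)$ with $2(x-\mathbb{E}[P_\xi x])$, which you justify correctly via the bound (b) and the fact that bounded linear functionals commute with the Bochner integral.
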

\begin{proof}
  \begin{enumerate}
  \item The function $x\mapsto \dist(x,C_{i})$ is convex for all $i
    \in I$, since $C_{i} = \Fix P_{i}$ is convex, nonempty and
    closed. On $[0,\infty)$ the function $x\mapsto x^{2}$ is
    increasing and convex, so $x\mapsto \dist^{2}(x,C_{i})$ is convex,
    $i \in I$. The convexity of $R$ follows by linearity of the
    expectation.
  \item We need to show that
    \begin{align*}
      \lim_{0 \neq \norm{y} \to 0} \frac{\abs{ R(x+y) - R(x) -
          2\mathbb{E}[\act{x-P_{\xi}x,y}] }}{\norm{y}} =0.
    \end{align*}
    Let $(y_{n}) \subset \cb(0,\epsilon) \subset \mathcal{H}$ with
    $y_{n} \to 0$. Define a sequence of functions on $\Omega$ via
    \begin{align*}
      f_{n} = \frac{\abs{ \dist^{2}(x+y_{n},C_{\xi}) -
          \dist^{2}(x,C_{\xi}) - 2\act{x-P_{\xi}x,y_{n}}
        }}{\norm{y_{n}}}.
    \end{align*}
    Then for fixed $\omega \in \Omega$ we have $f_{n}(\omega) \to 0$
    as $n \to \infty$ since the function $x \mapsto
    \dist^{2}(x,C_{i})$ is Fr\'echet differentiable for all $i \in I$
    \cite[Corollary 12.30]{Bauschke2011}. Furthermore, we find for any
    $n \in \mathbb{N}$ that
    \begin{align*}
      f_{n} &= \frac{\abs{ \norm{x+y_{n} - P_{\xi}(x+y_{n})}^{2} -
          \norm{x-P_{\xi}x}^{2} - 2\act{x-P_{\xi}x,y_{n}}
        }}{\norm{y_{n}}} \\
      &= \frac{\abs{ \norm{y_{n} - P_{\xi}(x+y_{n}) + P_{\xi}x}^{2} +
          2\act{x-P_{\xi}x,P_{\xi} x - P_{\xi}(x+y_{n})}
        }}{\norm{y_{n}}} \\
      &= \frac{\abs{ \norm{y_{n}}^{2} + \norm{P_{\xi}x -
            P_{\xi}(x+y_{n})}^{2} + 2\act{y_{n}+x-P_{\xi}x,P_{\xi} x -
            P_{\xi}(x+y_{n})} }}{\norm{y_{n}}} \\
      &\le \frac{ 4\norm{y_{n}}^{2}+ 2\dist(x,C_{\xi}) \norm{y_{n}} }{\norm{y_{n}}} \\
    &\le 4\epsilon + 2\dist(x,C_{\xi}) =: g,
  \end{align*}
  where, in the first inequality, we used nonexpansivity of the
  projectors $P_{i}$, $i \in I$ and the Cauchy-Schwartz inequality.
  In particular with H\"older's inequality follows that $\mathbb{E}[g]
  \le 4\epsilon + 2 \sqrt{R(x)}$, i.e.\ $g$ is integrable and hence
  Lebesgue's Dominated Convergence Theorem yields $\mathbb{E}[f_{n}]
  \to 0$, which gives us Fr\'echet differentiability of $R$ with
  derivative $\nabla R(x)= 2\mathbb{E}[x-P_{\xi}x]$. Continuity of
  $\nabla R$ follows from
  \begin{align*}
    \norm{\nabla R(x+y) - \nabla R(x)} &=
    2\norm{\mathbb{E}[y-P_{\xi}(x+y)+P_{\xi}x]} \\ &\le 2
    \mathbb{E}[ \norm{y}+\norm{P_{\xi}(x+y)-P_{\xi}x} ] \\ &\le 4
    \norm{y},
  \end{align*}
  where we used nonexpansivity of the projectors $P_{i}$, $i \in I$
  in the second inequality.
  \item For any $x,y\in \mathcal{H}$ it holds that $\norm{\nabla R(x)-
      \nabla R(y)} \le 2 ( \norm{x-y} + \norm{\mathbb{E}[P_{\xi}x -
      P_{\xi}y]})$. Applying Jensen's inequality and nonexpansivity,
    we arrive at the desired result.
  \item Clearly if $x \in C$, then $x=P_{\xi}x$ a.s.\ and so
    $x=\mathbb{E}[P_{\xi}x]$, i.e.\ $\nabla R(x)=0$ by
    \ref{item:prop_R:derivative}.\\
    Now conversely, if $\nabla R(x)=0$, then by convexity
    $R(x)-R(y)\le \act{\nabla R(x),x-y}=0$ for all $y \in
    \mathcal{H}$. Since $C\neq \emptyset$ there is $y\in \mathcal{H}$
    with $R(y)=0$, so also $R(x)=0$, i.e.\ $x \in C$.
  \end{enumerate}
\end{proof}

\section{Paracontractions}
\label{sec:paracont}

Paracontractions include the set of averaged operators, but averaged
mappings possess more useful regularity properties, e.g.\ when
composing these operators, one stays in the set of averaged operators,
whereas for nonaveraged operators this is not clear in general. An
example of a nonaveraged paracontraction in $\mathbb{R}$ is a Huber
function with parameter $\alpha>0$ (see also \cite[Example
2.3]{BausckeBorwein96} for $\alpha=1$)
\begin{align*}
  f_{\alpha}(x) :=
  \begin{cases}
    \frac{x^{2}}{2\alpha},& \abs{x}\le \alpha\\
    \abs{x}-\frac{\alpha}{2},& \abs{x}>\alpha\\
  \end{cases}, \qquad
  x \in \mathbb{R}.
\end{align*}
We have that $f_{\alpha}$ is nonexpansive and paracontractive, but not
averaged, since for $x=-2\alpha$ and $y=-\alpha$ one has
$f(x)=\tfrac{3\alpha}{2}$ and $f(y) = \tfrac{\alpha}{2}$.  Consequently
\begin{align*}
  \abs{f(x)-f(y)} = \alpha = \abs{x-y}, \qquad \text{but} \quad \abs{x-f(x)-(y-f(y))} =
  2\alpha \neq 0.
\end{align*}

In general metric spaces with nonlinear structure the averaged
mappings are not defined or at least demand a different definition,
but still the paracontraction framework applies here and exhibits a
useful description of mappings which ensure that the RFI converges to a
common fixed point. Paracontractions were used in \cref{sec:compact metr} to 
guarantee Fej\'er monotonicity, yielding convergence; averagedness 
in this context would be too strong an assumption
(and is not defined actually). In the
following we provide an example of a class of paracontracting
operators in $\mathbb{R}^{n}$, that are not in general averaged,
resolvents of quasiconvex functions.

\begin{definition}
  A function $\mymap{f}{\mathbb{R}^{n}}{\mathbb{R}}$ is called
  quasiconvex, if the sublevel sets
  \begin{align*}
    \mysetc{x \in
      \mathbb{R}^{n}}{f(x) \le \alpha}
  \end{align*}
  are convex for all $\alpha \in \mathbb{R}$. Equivalently, $f$
  satisfies
  \begin{align*}
    f(\lambda x + (1-\lambda)y) \le \max\{f(x),f(y)\} \qquad \forall
    x,y \in \mathbb{R}^{n}, \, \forall \lambda \in [0,1].
  \end{align*}
\end{definition}

The proximity operator function
$\mymap{f}{\mathbb{R}^{n}}{\mathbb{R}}$ is given by the set-valued
mapping
\begin{align*}
  \prox_{f}(x) := \argmin_{y \in \mathbb{R}^{n}} \left\{ f(y) +
    \frac{1}{2}\norm{x-y}^{2} \right\}, \qquad x \in \mathbb{R}^{n}.
\end{align*}
\begin{lemma}\label{lemma:quasiconvexParacontraction}
  Let $\mymap{f}{\mathbb{R}^{n}}{\mathbb{R}}$ be twice differentiable
  and quasiconvex and satisfy $S:=\argmin f \neq \emptyset$ and
  $\nabla f \neq 0$ on $\mathbb{R}^{n} \setminus S$, furthermore
  suppose that $\id + \Hess f(x)$ is positive definit for all $x \in
  \mathbb{R}^{n}$, then $\prox_{f}$ is paracontracting.
\end{lemma}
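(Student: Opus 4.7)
The plan is as follows. First, I would verify that $\prox_f$ is single-valued: for each $x\in\myRn$, the objective $g_{x}(y) := f(y) + \tfrac{1}{2}\norm{x-y}^{2}$ has Hessian $\id + \Hess f(y) \succ 0$ everywhere and is coercive (since $f$ is bounded below by $\min f$), hence strictly convex with a unique minimizer $p := \prox_{f}(x)$. The first-order optimality condition at $p$ reads
\[
  x - p = \nabla f(p).
\]
In particular, $x$ is a fixed point of $\prox_{f}$ iff $\nabla f(x) = 0$, which by the hypothesis $\nabla f \neq 0$ on $\myRn\setminus S$ is equivalent to $x \in S$. Thus $\Fix \prox_{f} = S$. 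Continuity of $\prox_{f}$ follows by applying the implicit function theorem to $F(x,y) := y - x + \nabla f(y)$, whose Jacobian $\partial_{y}F = \id + \Hess f(y)$ is invertible by assumption; so $\prox_{f}$ is even $C^{1}$.

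Next I would establish the strict contraction inequality. Fix $x \notin S$ and $y \in S$, and set $p = \prox_{f}(x)$. Note $p \notin S$: otherwise $\nabla f(p) = 0$, and the optimality condition would force $x = p \in S$, contradicting $x \notin S$. Consequently $\nabla f(p) \neq 0$ and $\norm{\nabla f(p)}^{2} > 0$. Expanding $\norm{p-y}^{2} = \norm{(x-y)-(x-p)}^{2}$ and substituting $x - p = \nabla f(p)$ yields
\begin{align*}
  \norm{p - y}^{2} = \norm{x - y}^{2} - \norm{\nabla f(p)}^{2} + 2 \act{\nabla f(p),\, y - p}.
\end{align*}

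The final step is to show that the sum of the last two terms is strictly negative. Since $y \in S = \argmin f$, we have $f(y) \le f(p)$; by the standard first-order characterization of (differentiable) quasiconvex functions, this implies $\act{\nabla f(p),\, y - p} \le 0$. Combined with $\norm{\nabla f(p)}^{2} > 0$, we conclude
\[
  \norm{\prox_{f}(x) - y}^{2} \le \norm{x - y}^{2} - \norm{\nabla f(p)}^{2} < \norm{x - y}^{2},
\]
which together with continuity gives the paracontraction property \eqref{eq:pc}.

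The main obstacle is a conceptual one rather than a computational one: everything hinges on invoking the differential characterization of quasiconvexity $\bigl(f(y)\le f(p) \Rightarrow \act{\nabla f(p), y - p}\le 0\bigr)$, which is the only place where quasiconvexity (as opposed to convexity of $f$) is used in an essential way. Once this is in hand, together with the nondegeneracy $\nabla f(p)\neq 0$ ensured by $p\notin S$, the inequality upgrades from weak to strict and the result follows.
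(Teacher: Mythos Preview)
Your proof is correct and follows essentially the same route as the paper's: both use strict convexity of $g_x$ for single-valuedness, the optimality condition $x-p=\nabla f(p)$, the first-order characterization of quasiconvexity $\langle\nabla f(p),y-p\rangle\le 0$, and the nondegeneracy $\nabla f(p)\neq 0$ to upgrade to the strict inequality. The only minor difference is that you obtain continuity of $\prox_f$ via the implicit function theorem (cleanly yielding $C^1$, though strictly speaking this needs $f\in C^2$ rather than merely twice differentiable), whereas the paper argues continuity directly from the inequality itself.
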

\begin{proof}
  Denote $A:= \id+\nabla f$. Let $x,y \in \mathbb{R}^{n}$ with $f(x)
  \ge f(y)$, then
  \begin{align*}
    \norm{A(x) - y}^{2} = \norm{x - y}^{2} + \norm{\nabla
      f(x)}^{2} + \act{ \nabla f(x), x - y} \ge \norm{ x - y} ^{2},
  \end{align*}
  where we used that in \cite{zbMATH03169925} it is shown, that a
  quasiconvex and differentiable function satisfies
  \begin{align*}
    f(x) \ge f(y) \qquad \Longrightarrow \qquad \act{\nabla f(x), x - y} \ge 0,
  \end{align*}
  for any $x,y \in \mathbb{R}^{n}$. Note that if $x \notin S$ then
  $\nabla f(x) \neq 0$ by assumption and hence for $y \in
  \mathbb{R}^{n}$ with $f(y) \le f(x)$ it holds that
  \begin{align}\label{eq:quasicvx_ineq1}
    \norm{A(x) - y} > \norm{x-y}.
  \end{align}
  Moreover, the function
  \begin{align*}
    g(y) := f(y) + \frac{1}{2} \norm{x-y}^{2}
  \end{align*}
  for fixed $x \in \mathbb{R}^{n}$ is bounded from below, since
  $\inf_{x} f(x) > -\infty$ by assumption and coercive. From positive
  definitness of $\id+\Hess f$ we have that $g$ is also twice
  continuously differentiable and strictly convex, hence it possesses
  a unique minimizer $\bar x$ that satisfies
  \begin{align*}
    x = \nabla f(\bar x) + \bar x = A(\bar x),
  \end{align*}
  it follows that $A(\mathbb{R}^{n}) = \mathbb{R}^{n}$, i.e.\ $A$ is
  surjective. Furthermore, $A$ is injective, since from uniqueness of
  the minimizer and sufficiency of the first order optimality
  criterion for $\bar x$ to be a minimizer ($g$ is convex) it follows
  that, if $A(\bar x) = A(\bar y)$, then $\bar x = \bar y$ is the
  minimizer for $g$ and in particular $A(\bar x) =x \, \Leftrightarrow
  \, \prox_{f}(x) = \bar x$.

  To show that also $\prox_{f}$ is continuous, fix $x \in \mathbb{R}^{n}$ and
  let $y \in \mathbb{B}(x,\epsilon)$. We can find a $z \in
  \cb(x,\epsilon)$ with $f(z) \le f(y)$ for all $y \in
  \cb(x,\epsilon)$ by continuity of $f$, so we get with
  \eqref{eq:quasicvx_ineq1} that
  \begin{align*}
    \norm{\prox_{f}(x)-\prox_{f}(y)} \le \norm{\prox_{f}(x)-z} +
    \norm{\prox_{f}(y) - z} < \norm{x-z} + \norm{y-z} < 2\epsilon.
  \end{align*}
  In particular, letting $y=\bar x \in S$ in
  \eqref{eq:quasicvx_ineq1}, we have that
  \begin{align*}
    \norm{\prox_{f}(x) - \bar x} < \norm{x - \bar x} \qquad \forall x \in
    \mathbb{R}^{n} \setminus S,
  \end{align*}
  where $S = \argmin f = \Fix \prox_{f}$.
\end{proof}

\begin{example}[non-averaged resolvent of quasiconvex function]
  The function $f(x) := 1-\exp(-\norm{x}^{2})$ for $x \in
  \mathbb{R}^{n}$ satisfies all the conditions in
  \cref{lemma:quasiconvexParacontraction}. Its proximity operator has
  the derivative $\prox_{f}'(A(x)) = (A'(x))^{-1}$, where $A(x) =
  \left(1+2\exp(-\norm{x}^{2})\right)x$, i.e.\ $A'(x) =
  \left(1+2\exp(-\norm{x}^{2})\right)\id - 4 \exp(-\norm{x}^{2}) x
  x^{\mathsf{T}}$. Since $\norm{\prox_{f}'(A(x))} \ge
  \norm{y}/\norm{A'(x) y}$ for any $y \in \mathbb{R}^{n} \setminus
  \{0\}$, we have with $x=e_{1}=(1,0,\ldots,0)^{\mathsf{T}}=y$ that
  $\norm{\prox_{f}'(A(e_{1}))} > 1$, which is in contradiction to
  nonexpansiveness of averaged mappings, that have derivative bounded
  by $1$, if it exists.
\end{example}

Where paracontractions also occur are nonconvex feasibility problems,
both consistent and inconsistent. As long as the fixed point set of
the averaged projections operator consists of isolated points and the projectors
are single-valued in a neighborhood of this fixed point, \cite[Theorem
3.2]{Luke2016a} shows that these operators are paracontractions,
whenever all assumptions of the theorem are met. Unfortunately, a
statement on paracontractiveness, for the case that the fixed point
set of the averaged projections operator does not consist of isolated points, is
however not possible in general.

Furthermore, also nonconvex forward-backward operators appearing in
structured optimization of nonconvex objective functions show the
paracontractiveness property, see \cite[Proposition 3.9]{Luke2016a},
and these are not averaged in general, still the assumption that the 
fixed points are isolated is used.

\bibliography{lit}

\end{document}